\newtheorem{theorem}{Theorem}[section]
\newtheorem{lemma}[theorem]{Lemma}
\newtheorem{coro}[theorem]{Corollary}
\newtheorem{prop}[theorem]{Proposition}
\theoremstyle{definition}
\newtheorem{defn}[theorem]{Definition}
\newtheorem{remark}[theorem]{Remark}
\newtheorem{exam}[theorem]{Example}
\newcommand{\nc}{\newcommand}
\newcommand{\delete}[1]{}
\nc{\tred}[1]{\textcolor{red}{#1}}
\nc{\tblue}[1]{\textcolor{blue}{#1}} \nc{\tgreen}[1]{\textcolor{green}{#1}} \nc{\tpurple}[1]{\textcolor{purple}{#1}} \nc{\btred}[1]{\textcolor{red}{\bf #1}} \nc{\btblue}[1]{\textcolor{blue}{\bf #1}} \nc{\btgreen}[1]{\textcolor{green}{\bf #1}} \nc{\btpurple}[1]{\textcolor{purple}{\bf #1}}
\newcommand{\efootnote}[1]{}
\nc{\mlabel}[1]{\label{#1}}  
\nc{\mcite}[1]{\cite{#1}}  
\nc{\mref}[1]{\ref{#1}}  
\nc{\meqref}[1]{\eqref{#1}}  
\nc{\mbibitem}[1]{\bibitem{#1}} 
\nc{\mlabel}[1]{\label{#1}  
{\hfill \hspace{1cm}{\bf{{\ }\hfill(#1)}}}}
\nc{\mcite}[1]{\cite{#1}{{\bf{{\ }(#1)}}}}  
\nc{\mref}[1]{\ref{#1}{{\bf{{\ }(#1)}}}}  
\nc{\meqref}[1]{\eqref{#1}{{\bf{{\ }(#1)}}}}  
\nc{\mbibitem}[1]{\bibitem[\bf #1]{#1}} 
\renewcommand\geq{\geqslant}
\renewcommand\leq{\leqslant}
\renewcommand\bar[1]{\overline{#1}}
\nc{\name}[1]{{\bf #1}}
\nc{\tforall}{\quad \text{ for all }}
\nc{\mre}{\mathrm{Re}\,}
\nc{\mim}{\mathrm{im}\,}
\nc{\nz}{\varepsilon}
\nc{\Id}{\mathrm{Id}}
\nc{\map}[2]{{#2}^{#1}}
\nc{\gp}{B}
\nc{\GSym}{\mathrm{StSym}}
\nc{\QS}{\mathrm{QS}}
\nc{\bwcc}{H_{\DWC}}
\nc{\C}{\mathrm{C}}
\nc{\WC}{\mathrm{WC}}
\nc{\LWC}{\mathrm{LWC}}
\nc{\BC}{\mathrm{BC}}
\nc{\WBC}{\mathrm{WBC}}
\nc{\LWBC}{\mathrm{LWBC}}
\nc{\dwc}{directional weak composition\xspace}
\nc{\dwcs}{directional weak compositions\xspace}
\nc{\DWC}{\mathrm{DWC}}
\nc{\Irr}{\mathrm{Irr}}
\nc{\vx}{\sigma} \nc{\vy}{\tau} \nc{\dvx}{\sigma^{(1)}} \nc{\dvy}{\tau^{(1)}} \nc{\done}{\vep} \nc{\mcitep}[1]{\mcite{#1}} \nc{\wt}{\mathrm{wt}} \nc{\bre}[1]{|#1|} \nc{\mapmonoid}{\frakM} \nc{\disjoint}{\frakM'}
\nc{\ncpoly}[1]{\langle #1\rangle}  
\nc{\mapm}[1]{\lfloor\!|{#1}|\!\rfloor}
\nc{\diff}[1]{{}^\NC\{ #1 \}} \nc{\disj}[1]{\{{#1}\}'} \nc{\mdisj}[1]{\frakM'(#1)} \nc{\brho}{\bar{\rho}} \nc{\om}{\bar{\frakm}} \nc{\frakn}{\mathfrak n} \nc{\ddeg}[1]{^{(#1)}} \nc{\opset}{X} \nc{\genset}{{Z}} \nc{\NC}{\mathrm{{NC}}} \nc{\leaf}{\mathrm{leaf}} \nc{\twig}{\mathrm{twig}} \nc{\fe}{\mathrm{fl}} \nc{\munderline}[1]{#1} \nc{\bo}{o} \nc{\dep}{\mathrm{depth}} \nc{\ofe}{\mathrm{ofl}} \nc{\dfe}{\mathrm{dfe}} \nc{\fex}{\mathrm{fex}} \nc{\dl}{\mathrm{dlex}} \nc{\db}{\mathrm{db}} \nc{\lex}{\mathrm{lex}} \nc{\clex}{\mathrm{clex}} \nc{\dgp}{\mathrm{dgp}} \nc{\dgx}{\mathrm{dgx}} \nc{\br}{\mathrm{br}} \nc{\obd}{\mathrm{odb}} \nc{\ob}{\mathrm{ob}}
\nc{\pie}{\mathrm{PIE}}
\nc{\rbo}{\mathrm{RBO}}
\nc{\supp}{\mathcal{S}}
\nc{\nul}{\mathcal{Z}}
\nc{\bin}[2]{ (_{\stackrel{\scs{#1}}{\scs{#2}}})}  
\nc{\binc}[2]{ \left (\!\! \begin{array}{c} \scs{#1}\\
    \scs{#2} \end{array}\!\! \right )}  
\nc{\bincc}[2]{  \left ( {\scs{#1} \atop
    \vspace{-1cm}\scs{#2}} \right )}  
\nc{\bs}{\bar{S}} \nc{\cosum}{\sqsubset} \nc{\la}{\longrightarrow} \nc{\rar}{\rightarrow} \nc{\dar}{\downarrow} \nc{\dprod}{**} \nc{\dap}[1]{\downarrow \rlap{$\scriptstyle{#1}$}} \nc{\md}[1]{\bar{#1}} \nc{\uap}[1]{\uparrow \rlap{$\scriptstyle{#1}$}} \nc{\defeq}{\stackrel{\rm def}{=}} \nc{\disp}[1]{\displaystyle{#1}} \nc{\dotcup}{\ \displaystyle{\bigcup^\bullet}\ } \nc{\gzeta}{\bar{\zeta}} \nc{\hcm}{\ \hat{,}\ } \nc{\hts}{\hat{\otimes}} \nc{\barot}{{\otimes}} \nc{\free}[1]{\bar{#1}} \nc{\uni}[1]{\tilde{#1}} \nc{\hcirc}{\hat{\circ}} \nc{\leng}{\ell} \nc{\lleft}{[} \nc{\lright}{]} \nc{\lc}{\lfloor} \nc{\rc}{\rfloor}
\nc{\lb}{[} 
\nc{\rb}{]} 
\nc{\curlyl}{\left \{ \begin{array}{c} {} \\ {} \end{array}
    \right.  \!\!\!\!\!\!\!}
\nc{\curlyr}{ \!\!\!\!\!\!\!
    \left. \begin{array}{c} {} \\ {} \end{array}
    \right \} }
\nc{\longmid}{\left | \begin{array}{c} {} \\ {} \end{array}
    \right. \!\!\!\!\!\!\!}
\nc{\onetree}{\bullet} \nc{\ora}[1]{\stackrel{#1}{\rar}}
\nc{\ola}[1]{\stackrel{#1}{\la}}
\nc{\ot}{\otimes} \nc{\mot}{{{\boxtimes\,}}} \nc{\otm}{\overline{\boxtimes}} \nc{\sprod}{\bullet} \nc{\scs}[1]{\scriptstyle{#1}} \nc{\mrm}[1]{{\rm #1}} \nc{\msum}{\sum\limits}
\nc{\margin}[1]{\marginpar{\rm #1}}   
\nc{\dirlim}{\displaystyle{\lim_{\longrightarrow}}\,} \nc{\invlim}{\displaystyle{\lim_{\longleftarrow}}\,} \nc{\mvp}{\vspace{0.3cm}} \nc{\tk}{^{(k)}} \nc{\tp}{^\prime} \nc{\ttp}{^{\prime\prime}} \nc{\svp}{\vspace{2cm}} \nc{\vp}{\vspace{8cm}} \nc{\proofbegin}{\noindent{\bf Proof: }}
\nc{\proofend}{$\blacksquare$ \vspace{0.3cm}}
\nc{\modg}[1]{\!<\!\!{#1}\!\!>}
\nc{\intg}[1]{F_C(#1)} \nc{\lmodg}{\!<\!\!} \nc{\rmodg}{\!\!>\!} \nc{\cpi}{\widehat{\Pi}}
\nc{\sha}{{\mbox{\cyr X}}}  
\nc{\shap}{{\mbox{\cyrs X}}} 
\nc{\shpr}{\diamond}    
\nc{\shp}{\ast} \nc{\shplus}{\shpr^+}
\nc{\shprc}{\shpr_c}    
\nc{\msh}{\ast} \nc{\zprod}{m_0} \nc{\oprod}{m_1} \nc{\vep}{\varepsilon} \nc{\labs}{\mid\!} \nc{\rabs}{\!\mid}
\nc{\astarrow}{\overset{\raisebox{-3pt}{$\ast$}}{\rightarrow}}
\nc{\sqsym}{Stirling quasisymmetric function\xspace}
\nc{\sqsyms}{Stirling quasisymmetric functions\xspace}
\nc{\EEsym}{\mathbb{E}sym}
\nc{\Sym}{\mrm{Sym}}
\nc{\NSym}{\mrm{NSym}}
\nc{\QSym}{\mrm{QSym}}
\nc{\RQSym}{\mrm{RQSym}}
\nc{\RenQSym}{\mrm{RenQSym}}	
\nc{\DQSym}{\mrm{DQSym}}
\nc{\WDQSym}{\mrm{WDQSym}}
\nc{\DLQSym}{\mrm{DLQSym}}
\nc{\ZQSym}{\mrm{ZQSym}}
\nc{\Ensym}{\mrm{ENSym}}
\nc{\Wcsym}{\mrm{WCSym}}
\nc{\LWQSym}{\mrm{LWQSym}}
\nc{\LWCQSym}{\mrm{\mathrm{LWQSym}}}
\nc{\Wcqsym}{\mrm{QSym}_{\widetilde{\mathbb{N}}}}
\nc{\Syms}{symmetric functions\xspace}
\nc{\eqsym}{extended quasisymmetric function\xspace}
\nc{\eqsyms}{extended quasisymmetric functions\xspace}
\nc{\Eqsyms}{Extended Quasisymmetric functions\xspace}
\nc{\Esyms}{Extended symmetric functions\xspace}
\nc{\sgqsym}{quasisymmetric function with semigroup exponents\xspace}
\nc{\sgqsyms}{quasisymmetric functions with semigroup exponents\xspace}
\nc{\Sgqsyms}{Quasisymmetric functions with semigroup exponents\xspace}
\nc{\SGQSYM}{\mrm{SGQSYM}}
\nc{\emzv}{extended multiple zeta value}
\nc{\emzvs}{extended multiple zeta values}
\nc{\sgfps}{formal power series with semigroup exponent\xspace}
\nc{\NSymg}{\mathrm{NSym}_\gp}
\nc{\zqsym}{zeta-quasisymmetric }
\nc{\gslwqsym}{Stirling left weak quasisymmetric function\xspace}
\nc{\gslwqsyms}{Stirling left weak quasisymmetric functions\xspace}
\nc{\ulwb}{upper-left weak bicomposition\xspace}
\nc{\ulwbs}{upper-left weak bicompositions\xspace}
\nc{\parr}{\rm Par}
\nc{\wpar}{\rm WPar}
\nc{\wcomp}{\large{\VDash}}
\nc{\Ker}{\ker}
\nc{\dth}{d} \nc{\mmbox}[1]{\mbox{\ #1\ }} \nc{\fp}{\mrm{FP}} \nc{\rchar}{\mrm{char}} \nc{\Fil}{\mrm{Fil}} \nc{\Mor}{Mor\xspace} \nc{\gmzvs}{gMZV\xspace} \nc{\gmzv}{gMZV\xspace} \nc{\mzv}{MZV\xspace} \nc{\mzvs}{MZVs\xspace}
\nc{\MZV}{\mathrm{MZV}}
\nc{\Hom}{\mrm{Hom}} \nc{\id}{\mrm{id}} \nc{\im}{\mrm{im}} \nc{\incl}{\mrm{incl}}  \nc{\mchar}{\rm char}
\nc{\Alg}{\mathbf{Alg}} \nc{\Bax}{\mathbf{Bax}} \nc{\bff}{\mathbf f} \nc{\bfk}{{\bf k}} \nc{\bfone}{{\bf 1}} \nc{\bfx}{\mathbf x} \nc{\bfy}{\mathbf y}
\nc{\base}[1]{\bfone^{\otimes ({#1}+1)}} 
\nc{\Cat}{\mathbf{Cat}} \delete{}
\nc{\detail}{\marginpar{\bf More detail}
    \noindent{\bf Need more detail!}
    \svp}
\nc{\Int}{\mathbf{Int}} \nc{\Mon}{\mathbf{Mon}}
\nc{\rbtm}{{shuffle }} \nc{\rbto}{{Rota-Baxter }} \nc{\remarks}{\noindent{\bf Remarks: }} \nc{\Rings}{\mathbf{Rings}} \nc{\Sets}{\mathbf{Sets}}
\nc{\balpha}{\mathbf{\alpha}}
\nc{\BA}{{\mathbb A}} \nc{\CC}{{\mathbb C}} \nc{\DD}{{\mathbb D}} \nc{\EE}{{\mathbb E}} \nc{\FF}{{\mathbb F}} \nc{\GG}{{\mathbb G}} \nc{\HH}{{\mathbb H}} \nc{\LL}{{\mathbb L}} \nc{\NN}{{\mathbb N}} \nc{\KK}{{\mathbb K}} \nc{\PP}{{\mathbb P}} \nc{\QQ}{{\mathbb Q}} \nc{\RR}{{\mathbb R}} \nc{\TT}{{\mathbb T}} \nc{\VV}{{\mathbb V}} \nc{\ZZ}{{\mathbb Z}}
\nc{\cala}{{\mathcal A}} \nc{\calc}{{\mathcal C}} \nc{\cald}{{\mathcal D}} \nc{\cale}{{\mathcal E}} \nc{\calf}{{\mathcal F}} \nc{\calg}{{\mathcal G}} \nc{\calh}{{\mathcal H}} \nc{\cali}{{\mathcal I}} \nc{\call}{{\mathcal L}} \nc{\calm}{{\mathcal M}} \nc{\caln}{{\mathcal N}} \nc{\calo}{{\mathcal O}} \nc{\calp}{{\mathcal P}} \nc{\calr}{{\mathcal R}} \nc{\cals}{{\mathcal S}} \nc{\calt}{{\mathcal T}} \nc{\calw}{{\mathcal W}} \nc{\calk}{{\mathcal K}} \nc{\calx}{{\mathcal X}}
\nc{\calz}{{\mathcal Z}}
\nc{\fraka}{{\mathfrak a}} \nc{\frakA}{{\mathfrak A}} \nc{\frakb}{{\mathfrak b}} \nc{\frakB}{{\mathfrak B}}
\nc{\frakc}{{\mathfrak c}}  \nc{\frakD}{{\mathfrak D}}
\nc{\frakH}{{\mathfrak H}}
\nc{\frakh}{{\mathfrak h}} \nc{\frakM}{{\mathfrak M}}
\nc{\frakO}{{\mathfrak O}}
\nc{\frakE}{{\mathfrak E}}
\nc{\bfrakM}{\overline{\frakM}} \nc{\frakm}{{\mathfrak m}} \nc{\frakP}{{\mathfrak P}} \nc{\frakN}{{\mathfrak N}} \nc{\frakp}{{\mathfrak p}} \nc{\frakS}{{\mathfrak S}}
\nc{\frakk}{{\mathfrak k}}
\nc{\frakx}{{\mathfrak x}}
\nc{\frakl}{{\mathfrak l}} \nc{\ox}{\bar{\frakx}} \nc{\frakX}{{\mathfrak X}} \nc{\fraky}{{\mathfrak y}} \nc\dop{\delta}
\nc{\Reduce}{{\rm Red}}
\font\cyr=wncyr10 \font\cyrs=wncyr7
\nc{\redt}[1]{\textcolor{red}{#1}}
\nc{\zb}[1]{\textcolor{purple}{#1}}
\nc{\li}[1]{\textcolor{red}{#1}}
\nc{\lir}[1]{\textcolor{red}{Li:#1}}
\nc{\yu}[1]{\textcolor{blue}{Yu:#1}}
\nc{\wvec}[2]{{\scriptsize{\Big [ \!\!\begin{array}{c} #1 \\ #2 \end{array} \!\! \Big ]}}}
\nc{\bwvec}[2]{\Big(\wvec{#1}{#2}\Big)}
\nc{\jwvec}[2]{{\scriptsize{\Big [ \!\!\begin{array}{cccccccccccccc} #1 \\ #2 \end{array} \!\! \Big ]}}}
\nc{\bjwvec}[2]{\Big(\jwvec{#1}{#2}\Big)}
\begin{document}
\title[Renormalization of quasisymmetric functions]{Renormalization of quasisymmetric functions}

\author{Li Guo}
\address{Department of Mathematics and Computer Science, Rutgers University, Newark, NJ 07102, USA}
\email{liguo@rutgers.edu}

\author{Houyi Yu}
\address{School of Mathematics and Statistics, Southwest University, Chongqing 400715, China}
\email{yuhouyi@swu.edu.cn}

\author{Bin Zhang}
\address{School of Mathematics, Yangtze Center of Mathematics,
	Sichuan University, Chengdu, 610064, China}
\email{zhangbin@scu.edu.cn}

\hyphenpenalty=8000

\date{\today}

\begin{abstract}
As a natural basis of the Hopf algebra of quasisymmetric functions, monomial quasisymmetric functions are formal power series defined from compositions. The same definition applies to left weak compositions, while leads to divergence for other weak compositions. We adapt the method of renormalization in quantum field theory, in the framework of Connes and Kreimer, to deal with such divergency. This approach defines monomial quasisymmetric functions for any weak composition as power series while extending the quasi-shuffle (stuffle) relation satisfied by the usual quasisymmetric functions. The algebra of renormalized quasisymmetric functions thus obtained turns out to be isomorphic to the quasi-shuffle algebra of weak compositions, giving the former a natural Hopf algebra structure and the latter a power series realization. This isomorphism also gives the free commutative Rota-Baxter algebra a power series realization, in support of a suggestion of Rota that Rota-Baxter algebra should provide a broad context for generalizations of symmetric functions.
\end{abstract}

\subjclass[2010]{
05E05,  
81T15,  
16T05,  
17B38,  
11M32,  
16W99,  
11B73  
}

\keywords{quasisymmetric function, renormalization, weak composition, Hopf algebra, Rota-Baxter algebra, quasi-shuffle, Stirling number}

\maketitle

\tableofcontents

\hyphenpenalty=8000 \setcounter{section}{0}


\allowdisplaybreaks

\section{Introduction}\mlabel{sec:int}

Applying the method of renormalization, this paper defines monomial quasisymmetric functions as power series for all weak compositions, yielding a Hopf algebra that extends the Hopf algebra of quasisymmetric functions. In doing so, we obtain a power series realization of the free Rota-Baxter algebra on one generator.

\subsection{Quasisymmetric functions and weak compositions}

The notion of quasisymmetric functions was formally introduced by Gessel~\cite{Ge} in 1984 with its motivation traced back to the work of Stanley~\cite{St1972} on $P$-partitions and of MacMahon~\cite{Mac} on plane partitions. See~\cite{LMW} for a history and general background on quasisymmetric functions.

As the study of quasisymmetric functions became popularized in the 1990s together with other generalizations of symmetric functions~\mcite{NCS,MR}, their importance became evident, through their Hopf algebra structure as the terminal object in the category of combinatorial Hopf algebras~\cite{ABS06} and their diverse applications including in combinatorics, number theory and representation theory. In analog to symmetric functions, much of the significance of quasisymmetric functions is exhibited by their canonical linear bases, including the bases of monomial, fundamental and Schur quasisymmetric functions \cite{HLMW11}, all parameterized by {\bf compositions}, that is, vectors of positive integers.

Most natural among these bases is the basis of monomial quasisymmetric functions
\begin{equation} \mlabel{eq:mqsym}
M_\alpha:=\sum_{0<i_1<i_2<\cdots<i_k}x_{i_1}^{\alpha_1}x_{i_2}^{\alpha_2}\cdots x_{i_k}^{\alpha_k}
\end{equation}
for compositions $\alpha=(\alpha_1,\cdots,\alpha_k)$.
By specializing $x_i$ to $1/i$, $M_\alpha$ gives the multiple zeta value (MZV)
\begin{equation} \mlabel{eq:mzv}
\zeta(\alpha):=\sum_{ 0<i_1<i_2<\cdots<i_k} \frac{1}{i_1^{\alpha_1}i_2^{\alpha_2}\cdots i_k^{\alpha_k}},
\end{equation}
which is convergent for $\alpha_k\geq 2, \alpha_j\geq 1, 1\leq j\leq k-1$. This has led to the algebraic study of MZVs which has its origin in the work of Euler and Goldbacher in the two variable case and became highly important since the general study was started in the 1990s~\cite{Br12,Ho92,IKZ,Za,Za2}. On the other hand, monomial quasisymmetric functions are a special case of the quasi-shuffle algebra when the spanning algebra is $x\bfk[x]$ of polynomials in one variable {\em without constant terms}. The quasi-shuffle product has its equivalent forms in terms of stuffles, the sticky shuffle product, mixable shuffle product and overlapping product, among others~\cite{Car1972,EG2006,Gub,Ha,Ho00,Losh}. Under these guises, they appeared in many studies, including Rota-Baxter algebras, Zinbiel algebras and motivic multiple zeta values.

When $\alpha$ is a {\bf weak composition}, that is, a vector of nonnegative integers, the expression of $M_\alpha$ in Eq.~\meqref{eq:mqsym} for the monomial quasisymmetric functions might not be well defined, as indicated by the simple exmaple
\begin{equation}\mlabel{eq:M0ns}
M_{(0)}=\sum_{i=1}^\infty i^0=\sum_{i=1}^\infty 1.
\end{equation}
In fact, it can be easily shown (Lemma~\mref{lem:wccong}) that such an expression gives a well-defined power series precisely when $\alpha$ is left weak, that is, when $\alpha_k$ is nonzero.

Thus it is natural to explore how to make sense of the expression $M_\alpha$ in Eq.~\meqref{eq:mqsym} as a power series for any weak composition $\alpha$.
Our interest in doing this is further motivated by the following considerations.

\begin{enumerate}
\item
The specializations of such expressions to MZVs, though still divergent, have been defined through various renormalization processes~\cite{EMS,GZ08,IKZ,MP}. The success in applying the renormalization method to MZVs suggests that the divergent expressions in Eq.~\meqref{eq:mqsym} be similarly treated, providing another testing ground in applying the renormalization method to divergencies in mathematics;
\item
Quasisymmetric functions have appeared as the main building block for free commutative Rota-Baxter algebras~\mcite{EG2006,GK}, in line with the program proposed by Rota~\mcite{Ro2} that Rota-Baxter algebras provide a natural and broad context for generalizations of symmetric functions.\footnote{To quote Rota, ``Baxter algebras represent the ultimate and most natural generalization of the algebra of symmetric functions."} To follow this program further, one expects that the full free commutative Rota-Baxter algebra be realized as some generalized quasisymmetric functions. This needs a notion of quasisymmetric functions for weak compositions. 
\item
As in the case of symmetric and quasisymmetric functions, it is desirable to obtain power series realizations for other abstractly defined combinatorial Hopf algebras, especially for quasi-shuffle Hopf algebras. See ~\mcite{FNT14,Ma13} for power series realizations in some other cases. Quasisymmetric functions for compositions gives a power series realization of the quasi-shuffle algebra on $x\bfk[x]$. A suitably defined quasisymmetric functions for weak compositions should give a power series realization of the quasi-shuffle algebra on $\bfk[x]$. An intermediate step is taken in~\mcite{GTY19} which gives such a realization in abstract power series with semigroup exponents, which are not the usual power series. The fact that a renormalization process \'a la Connes and Kreimer takes place in Laurent series and power series makes the process a promising channel for power series realizations. 
\end{enumerate}

\subsection{The approach of renormalization}

In this work, we define a power series as the monomial quasisymmetric function for any weak composition, following the algebraic approach of Connes and Kreimer~\mcite{CK2000,EGK} to renormalization in perturbative quantum field theory. Fundamental in their approach is the Algebraic Birkhoff Factorization (ABF) built on Hopf algebras and Rota-Baxter algebras. Such a setup allows the ideas of renormalization in quantum field theory to be applied to extract finite quantities from divergencies in mathematics.

The Algebraic Birkhoff Factorization (in its opposite form; see Theorem~\mref{thm:Abdt}) states that, for a given triple $(H, R, \phi)$ consisting of
\begin{itemize}
\item a connected filtered Hopf algebra $H$,
\item a commutative Rota-Baxter algebra $R$ on which the Rota-Baxter operator $P:R\to R$ is idempotent, and
\item an algebra homomorphism $\phi: H\to R$ serving as the regularization map,
\end{itemize}
there are unique algebra homomorphisms
$
\phi_-: H\to \bfk+P(R)$ and
$\phi_+: H\to \bfk+(\id-P)(R)$
such that
\begin{equation}
\phi=\phi_+\star\phi_-^{\star (-1)}.
\mlabel{eq:abd}
\end{equation}
Here $\star$ is the convolution product and $\phi_+$ is called the {\bf renormalization} of $\phi$.

To apply the Algebraic Birkhoff Factorization to our situation, we construct a Hopf algebra of bicompositions $\wvec{\alpha}{\beta}$ serving as parameters of the regularized quasisymmetric functions of weak compositions, with the second composition $\beta$ serving as directional parameters. We then regularize a monomial quasisymmetric function $M_\alpha$ in Eq.~\meqref{eq:mqsym}, which might a priori be divergent, by perturbing it by a directional controlling factor. Then a renormalization process is carried out by implementing the Algebraic Birkhoff Factorization. Further analysis gives rise to a well-defined power series that will be called the \name{(renormalized) quasisymmetric functions of weak compositions}. These power series have the following properties.

\begin{enumerate}
\item
Distinct weak compositions give linearly independent renormalized quasisymmetric functions. Renormalized quasisymmetric functions coincide with the existing quasisymmetric functions when the latter are already defined. Similar to the case of MZVs where the renormalized MZVs $\zeta(\alpha)$ for positive $\alpha$ (that is, compositions $\alpha$) is a one variable extension over the algebra of convergent MZVs (when $s_k\geq 2$), the renormalized quasisymmetric functions is a one variable extension over the algebra of convergent quasisymmetric functions (that is, for left weak compositions). This similarity suggests a relationship between renormalized quasisymmetric functions and renormalized MZVs further along Eqs.~\meqref{eq:mqsym} and~\meqref{eq:mzv};
\item
Through the connection of quasi-shuffle algebras with free Rota-Baxter algebras, the free commutative Rota-Baxter algebra on one generator is realized as a polynomial extension of the renormalized quasisymmetric functions. We thus get another instance in the direction of Rota's program;
\item
The renormalized quasisymmetric functions are power series forming an algebra naturally isomorphic to the quasi-shuffle algebra of weak compositions, thus giving the former algebra a Hopf algebra structure and the latter algebra a power series realization. It further has the Hopf algebra of quasisymmetric function as both a Hopf subalgebra and a Hopf quotient algebra. 
\end{enumerate}

We next provide further details of our approach which also serve as an outline of the paper.

\subsection{Outline of the paper}

In Section~\mref{sec:back}, we first recall the general principle of Algebraic Birkhoff Factorization of Connes and Kreimer for renormalization.
We then introduce the two main building blocks in order to apply the Algebraic Birkhoff Factorization to divergence in quasisymmetric functions. One is the Hopf algebra of weak bicompositions, serving as the parameter space of quasisymmetric functions of weak compositions with a directional purterbation, and the other is a Rota-Baxter algebra hosting the regularizations of the quasisymmetric functions of weak compositions.

The algebra homomorphism between the two algebras that serves as the regularization process is given in Section~\mref{sec:reg} (Theorem~\mref{thm:phi(lastpartlt0)}), with a careful analysis to ensure that the regularization indeed takes its values in the desired Rota-Baxter algebra.

In Section~\mref{sec:renorm}, first the principle of Algebraic Birkhoff Factorization is applied to the regularization in Section~\mref{sec:reg} to obtain a renormalization of the quasisymmetric function for a weak composition, which still inherits the directional dependency from the regularization. Then this dependency is removed after an averaging process, giving rise to the notion of the renormalized monomial quasisymmetric function uniquely defined for any given weak composition (Theorem~\mref{thm:mainthmwildms}). As expected, the renormalization power series agrees with a quasisymmetric function when it is already a well-defined power series.

Section~\mref{sec:reln} presents properties of the renormalized quasisymmetric functions.
First, the renormalized monomial quasisymmetric functions are shown to extend the quasi-shuffle product of monomial quasisymmetric functions (Theorem~\mref{thm:widetildeMsquasishuffle}).
It is then shown that the algebra of renormalized quasisymmetric functions
is a polynomial algebra of one variable over the ring of left weak quasisymmetric functions (Theorem \mref{thm:rqflmcasymm0}), and is isomorphic to the quasi-shuffle algebra of weak compositions (Thereom~\mref{prop:nalpisakbasis}). Then this algebra automatically inherits a Hopf algebra structure from the quasi-shuffle algebra, having the Hopf algebra of quasisymmetric functions as a Hopf subalgebra and Hopf quotient algebra (Proposition~\mref{pp:subquot}). Furthermore, by a scalar extension, we obtain the free commutative Rota-Baxter algebra of weight $1$ on one generator, giving a realization of the Rota-Baxter algebra as a generalization of symmetric functions, as envisioned by Rota~\mcite{Ro2}. 

\smallskip

\noindent
{\bf Convention.} In this paper we take $\bfk$ to be a commutative unitary ring containing $\QQ$, unless otherwise specified. All algebras, modules, linear maps and tensor products are taken over $\bfk$. Let $\NN$ and $\PP$ denote the set of nonnegative and positive integers respectively.
Given any $m, n \in \PP$ with $m\leq n$, denote $[m,n]:=\{m, m +1,\cdots, n\}$ and $[n]:=[1, n]$.

\section {Algebraic Birkhoff Factorization and weak bicompositions}
\mlabel{sec:back}

This section provides the setup for our renormalization of quasisymmetric functions of weak compositions. We first recall the general principle of Algebraic Birkhoff Factorization in the algebraic approach of Connes and Kreimer~\mcite{CK2000} to renormalization of perterbative quantum field theory. We then define Hopf algebras from various classes of weak compositions and weak bicompositions.
We finally recall bimonomial quasisymmetric functions~\mcite{ABB06} and introduce the notion of \gslwqsyms.

\subsection{Algebraic Birkhoff Factorization}

The Algebraic Birkhoff Factorization is built on the notions of Hopf algebras and Rota-Baxter algebras.

A \name{connected filtered Hopf algebra} \cite{Man} is a Hopf algebra $(H,m,u,\Delta,\vep,S)$ with its product $m$, unit $u:\bfk\to H$, coproduct $\Delta:H\to H\ot H$, counit $\vep:H\to \bfk$ and antipode $S:H\to H$, together with submodules $H^{(n)}, n\geq 0,$ satisfying
\begin{align*}
H^{(0)}=\bfk,\quad H^{(n)}\subseteq H^{(n+1)}, \quad H^{(p)}H^{(q)}\subseteq H^{(p+q)},
\quad \Delta(H^{(n)})\subseteq\sum_{p+q=n}H^{(p)}\otimes H^{(q)},\quad
S(H^{(n)})\subseteq H^{(n)}.
\end{align*}

Fix $\lambda\in\bfk$. A {\bf Rota-Baxter algebra} of weight $\lambda$~\mcite{Ba,Gub} is a pair $(R,P)$ consisting of an algebra $R$ and a linear operator $P:R\rightarrow R$
satisfying the Rota-Baxter identity
\begin{align}\mlabel{eq:RBOidentity}
P(x)P(y)=P(xP(y))+P(P(x)y)+\lambda P(xy) \tforall x,y\in R.
\end{align}
It follows from the definition that $P(R)$ and $(-\lambda\id-P)(R)$ are non-unitary subalgebras of $R$.
Thus $\bfk+P(R)$ and  $\bfk+(-\lambda\id-P)(R)$ are unitary subalgebras.

For an algebra $A$, the Laurent series ring
$$
A[z^{-1},z]]=\Big\{\sum_{n=k}^{\infty}a_nz^n\,\Big|\,k\in\mathbb{Z},a_n\in A,k\leq n<\infty\Big\},
$$
equipped with the projection $P$ onto the polar part, that is,
\begin{align}
P\Big(\sum_{n=k}^{\infty}a_nz^n\Big):=\sum_{n=k}^{-1}a_nz^n, \mlabel{eq:defRBO1}
\end{align}
is a Rota-Baxter algebra of weight $-1$. Furthermore, $P^2=P$.

In the Connes-Kreimer approach to renormalization of perturbative quantum field theory, the principle of renormalization can be formulated algebraically as follows.
\begin{theorem}\mlabel{thm:Abdt}{\rm {(Algebraic Birkhoff Factorization} \cite{CK2000,GZ08,Man})}
	Let $H$ be a connected filtered Hopf algebra, $(R,P)$ a commutative Rota-Baxter algebra of weight $-1$ with $P$ idempotent, and $\phi:H\rightarrow R$ an algebra homomorphism.
	Denote $\check{P}=-P$ and $\tilde{P}=\id-P$.
Then there are unique algebra homomorphisms $\phi_{-}:H\rightarrow \bfk+P(R)$ and $\phi_{+}:H\to \bfk+(\id-P)(R)$ with the factorization
		\begin{align}\mlabel{eq:ABDofphi}
		\phi=\phi _+\star \phi_{-}^{\star(-1)},
		\end{align}
		called the {\bf Algebraic Birkhoff Factorization} of $\phi$.
		Here $\phi_{-}^{\star(-1)}$ is the inverse of $\phi_{-}$ with respect to the convolution product $\star$ on
		the space of linear maps from $H$ to $R$ associated with the coproduct on $H$.
		Further,
		\begin{align}\mlabel{eq:phi-xabd}
		 \phi_{-}(x)=\check{P} \Big(\phi(x)+\sum_{(x)}\phi (x') \phi_{-}(x'') \Big)
		\end{align}
		and
		\begin{align}\mlabel{eq:phi+xabd}
		 \phi_{+}(x)=\tilde{P} \Big(\phi(x)+\sum_{(x)}\phi (x')\phi_{-}(x'') \Big).
		\end{align}
		Here we have used the notation $\Delta(x)=1\otimes x+x\otimes 1+\sum\limits_{(x)}x'\otimes x''$.
\end{theorem}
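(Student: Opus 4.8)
The plan is to construct $\phi_-$ explicitly by recursion on the filtration degree, read off $\phi_+$ from it, and then verify that both are algebra homomorphisms and that the factorization together with the image constraints pins them down uniquely. Connectedness is what makes the recursion run: writing $\Delta(x)=1\ot x+x\ot 1+\sum_{(x)}x'\ot x''$, the axiom $\Delta(H^{(n)})\subseteq\sum_{p+q=n}H^{(p)}\ot H^{(q)}$ together with $H^{(0)}=\bfk$ forces both factors of the reduced coproduct $\sum_{(x)}x'\ot x''$ to have strictly smaller degree whenever $x$ lies in the augmentation ideal with $\vep(x)=0$. I would therefore set $\phi_-(1)=1$ and, for such $x$, define $\phi_-(x)$ by Eq.~\meqref{eq:phi-xabd}; since every $x''$ occurring has smaller degree, the right-hand side refers only to already-defined values, so $\phi_-$ is well defined. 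Because $\check P=-P$ and $P(R)$ is a (non-unitary) subalgebra, each value lies in $\check P(R)=P(R)$, so $\phi_-$ maps into $\bfk+P(R)$ as required.

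Next I would package the recursion through the auxiliary (Bogoliubov-type) map $\bar\phi(x):=\phi(x)+\sum_{(x)}\phi(x')\phi_-(x'')$, so that $\phi_-(x)=\check P(\bar\phi(x))$ on the augmentation ideal. Setting $\phi_+:=\phi\star\phi_-$ and expanding the convolution gives $\phi_+(x)=\phi(x)+\phi_-(x)+\sum_{(x)}\phi(x')\phi_-(x'')=\bar\phi(x)+\check P(\bar\phi(x))=\tilde P(\bar\phi(x))$, which is Eq.~\meqref{eq:phi+xabd} and shows $\phi_+$ lands in $\bfk+(\id-P)(R)$. Since $H$ is connected filtered and $\phi_-(1)=1$, the map $\phi_-$ is invertible for $\star$, and rearranging $\phi_+=\phi\star\phi_-$ yields the factorization $\phi=\phi_+\star\phi_-^{\star(-1)}$.

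The main step, and the principal obstacle, is to prove that $\phi_-$ is an algebra homomorphism, which I would do by induction on $\deg x+\deg y$. The key algebraic identity is the counterterm recursion
\begin{equation*}
\bar\phi(xy)=\bar\phi(x)\,\phi_-(y)+\phi_-(x)\,\bar\phi(y)+\bar\phi(x)\,\bar\phi(y)
\end{equation*}
for $x,y$ in the augmentation ideal, which I would establish by expanding $\Delta(xy)=\Delta(x)\Delta(y)$, using that $\phi$ is multiplicative and that $\Delta$ is an algebra map, and invoking the inductive multiplicativity of $\phi_-$ on the strictly lower-degree terms produced by the reduced coproduct; the delicate part is the bookkeeping that matches the cross terms of $\Delta(x)\Delta(y)$ with the three products on the right. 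Granting this, I translate the weight-$(-1)$ identity \meqref{eq:RBOidentity} for $P$ into the weight-$(+1)$ identity $\check P(a)\check P(b)=\check P(a\check P(b))+\check P(\check P(a)b)+\check P(ab)$ for $\check P=-P$, apply it with $a=\bar\phi(x)$, $b=\bar\phi(y)$, and read off $\phi_-(x)\phi_-(y)=\check P\big(\bar\phi(x)\phi_-(y)+\phi_-(x)\bar\phi(y)+\bar\phi(x)\bar\phi(y)\big)=\check P(\bar\phi(xy))=\phi_-(xy)$. Once $\phi_-$ is a character, commutativity of $R$ makes the convolution $\phi_+=\phi\star\phi_-$ of the two characters $\phi$ and $\phi_-$ automatically a character, so no separate argument for $\phi_+$ is needed.

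Finally, for uniqueness I would use that $P$ is idempotent, so $R=P(R)\oplus(\id-P)(R)$ with $P(R)\cap(\id-P)(R)=0$. Given any factorization $\phi=\phi_+\star\phi_-^{\star(-1)}$ respecting the image constraints, I rewrite it as $\phi_+=\phi\star\phi_-$, evaluate on $x$ in the augmentation ideal, and separate the degree-lowering terms to obtain $\phi_+(x)-\phi_-(x)=\bar\phi(x)$, where $\phi_+(x)\in(\id-P)(R)$ and $\phi_-(x)\in P(R)$. Applying $P$ and $\id-P$ and using the direct-sum decomposition forces $\phi_-(x)=-P(\bar\phi(x))=\check P(\bar\phi(x))$ and $\phi_+(x)=(\id-P)(\bar\phi(x))=\tilde P(\bar\phi(x))$, which is exactly the recursion above; induction on degree then shows the two maps are uniquely determined, completing the proof.
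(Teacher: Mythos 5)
The paper itself gives no proof of this theorem: it is quoted from \cite{CK2000,GZ08,Man}, so the only meaningful comparison is with the standard argument in those references, and your proposal is exactly that argument --- the Bogoliubov-type preparation map $\bar{\phi}$, the recursive definition $\phi_-=\check{P}\circ\bar{\phi}$ on the augmentation ideal, the definition $\phi_+:=\phi\star\phi_-$, multiplicativity of $\phi_-$ via the identity $\bar{\phi}(xy)=\bar{\phi}(x)\phi_-(y)+\phi_-(x)\bar{\phi}(y)+\bar{\phi}(x)\bar{\phi}(y)$ combined with the weight-$(+1)$ Rota--Baxter identity for $\check{P}$, and multiplicativity of $\phi_+$ for free from commutativity of $R$. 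The existence half is correct: the key identity does close under your induction on filtration degree (the cross terms of $\Delta(x)\Delta(y)$ match after applying the inductive hypothesis to terms like $\phi_-(xy'')=\phi_-(x)\phi_-(y'')$ and using commutativity of $R$), and the sign bookkeeping $\check{P}(a)\check{P}(b)=\check{P}(a\check{P}(b))+\check{P}(\check{P}(a)b)+\check{P}(ab)$ is right.

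The uniqueness half, however, has a gap relative to the statement as literally written. You assert that for $x$ in the augmentation ideal one has $\phi_-(x)\in P(R)$ and $\phi_+(x)\in(\id-P)(R)$, but the hypotheses only place these values in $\bfk+P(R)$ and $\bfk+(\id-P)(R)$, and these two unital subalgebras both contain $\bfk$, so the direct-sum argument does not control the constant components. This is not cosmetic: read literally, uniqueness fails. Take $H=\bfk[x]$ with $x$ primitive, $R$ the Laurent series algebra with $P$ the polar-part projection, and $\phi(x)=a$; then for every $c\in\bfk$ the characters determined by $\psi_-(x)=c-P(a)$ and $\psi_+(x)=c+(\id-P)(a)$ take values in $\bfk+P(R)$ and $\bfk+(\id-P)(R)$, and $\phi\star\psi_-$ is a character agreeing with $\psi_+$ on the generator $x$, hence $\psi_+=\phi\star\psi_-$ and $\phi=\psi_+\star\psi_-^{\star(-1)}$, giving infinitely many admissible pairs. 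The cited sources avoid this by an explicit normalization, e.g.\ Manchon requires $\phi_-(\ker\vep)\subseteq P(R)$ (equivalently, the $\bfk$-component of $\phi_-(x)$ is $\vep(x)$). So this is partly a looseness in the statement itself, but your induction genuinely needs the normalization: you should state it as part of what is assumed of the pair $(\phi_+,\phi_-)$ --- your constructed pair does satisfy it, and with it your uniqueness induction is correct --- rather than treat it as a consequence of the stated codomain conditions, from which it cannot be derived.
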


\begin {remark} The original form of the Algebraic Birkhoff Factorization is
$
\phi=\phi_{-}^{\star(-1)} \star \phi _+.
$
It is equivalent to the form
$
		\phi=\phi _+\star \phi_{-}^{\star(-1)}
$
stated above once the coproduct $\Delta(x)=\sum\limits_{(x)}x_{(1)} \ot x_{(2)}$ is replace by its opposite $\Delta^{\text{op}}(x):=\sum\limits_{(x)}x_{(2)} \ot x_{(1)}.$
\end{remark}

\subsection{Hopf algebras of weak bicompositions}
\mlabel{ss:alg}
After a brief summary on weak compositions and bicompositions, we construct a commutative
connected filtered Hopf algebra of weak bicompositions. 

We first recall basic notions on compositions and weak compositions.
\begin {itemize}
\item A \name{weak composition} $\alpha$ of a nonnegative integer $n$ is a finite ordered list of nonnegative integers $(\alpha_1,\alpha_2,\cdots,\alpha_k)$
such that $\alpha_1+\alpha_2+\cdots+\alpha_k=n$.
The $\alpha_i$ are called the \name{components} of $\alpha$. To simplify the notation, we abbreviate $m$ consecutive components $j$ by $j^m$. 
\item A weak composition is called a \name{left weak composition} if its last component is positive, and is called a \name {composition} if all of its components are positive. Without referring to it sum, a weak composition is simply a vector of nonnegative integers and a composition is a vector of positive integers.
\item Given a weak composition $\alpha= (\alpha_1,\alpha_2,\cdots,\alpha_k)$,
its \name{$0$-length} $\ell_{0}(\alpha)$ is the number of $0$ components of $\alpha$. Its \name{length} $\ell(\alpha)$ is the number of components of $\alpha$.
We also denote its \name{size} by $|\alpha|:=\alpha_1+\alpha_2+\cdots+\alpha_k$, its
\name{total size} by $||\alpha||:=|\alpha|+\ell_{0}(\alpha)$. If $\alpha$ is a composition of size $n$, then we denote $\alpha\models n$.
For convenience we define the \name{empty composition}, denoted $\emptyset$, to be the unique weak composition whose size and length are $0$.
\end{itemize}

Given a nonnegative integer $n$, let $\C(n)$, ${\WC}(n)$ and ${\LWC}(n)$ denote the set of compositions,
weak compositions and left weak compositions of $n$, respectively.
We also write ${\C}$, ${\WC}$ and ${\LWC}$ for the sets of compositions, weak compositions and left weak compositions, respectively. These sets are all graded sets with grading given by the size $n$.
Clearly, we have ${\C}\subsetneq{\LWC}\subsetneq {{\WC}}$.

There are several operations on compositions and weak compositions:
\begin {itemize}
\item The {\bf reversal} of a composition $\alpha$, denoted by $\alpha^{r}$, is obtained by writing the components of $\alpha$ in the reverse order.
\item
The composition obtained from a weak composition $\alpha$ by omitting its $0$ components is denote by $\overline{\alpha}$.
\item For a pair of weak  compositions $\alpha=(\alpha_1,\cdots,\alpha_k)$ and $\beta=(\beta_1,\cdots,\beta_{\ell})$,
the \name{concatenation} of $\alpha$ and $\beta$ is
$\alpha\cdot \beta:=(\alpha_1,\cdots,\alpha_k,\beta_1,\cdots,\beta_\ell)$.
\end {itemize}
For example, take $\alpha=(3,0,1,0)$ and $\beta=(2,0)$. Then $\alpha^r=(0,1,0,3)$, $\overline{\alpha}=(3,1)$ and $\alpha\cdot\beta=(3,0,1,0,2,0)$.

Let $\alpha=(\alpha_1,\alpha_2,\cdots,\alpha_k)$ and $\beta=(\beta_1,\beta_2,\cdots,\beta_k)$ be weak compositions of the same length.
The matrix
\begin{align*}
\wvec{\alpha}{\beta}=\wvec{\alpha_1, \alpha_2, \cdots, \alpha_k}{\beta_1,\beta_2,\cdots,\beta_k}
\end{align*}
is called a {\bf weak bicomposition}. We call $\wvec{\alpha}{\beta}$ an {\bf \ulwb} if $\alpha$ is a left weak composition, that is, if $\alpha_k>0$.
The {\bf empty weak bicomposition}, of length $0$, is also denoted by $\emptyset$.
We denote by $\WBC$ and $\LWBC$ for the sets of weak bicompositions and \ulwbs, respectively.

Let $A$ be an additive semigroup. For each nonnegative integer $n$, let $\bfk A^n$ be the free $\bfk$-module spanned by the Cartesian power $A^n$ with the convention that $A^0=\{\emptyset\}$ and $\bfk A^0=\bfk$.
The \name{quasi-shuffle Hopf algebra}~\cite{GZ08,Ho00} $\QS(A)$ on the semigroup algebra $\bfk A$ is the $\bfk$-module $\bfk\langle A\rangle=\oplus_{n=0}^{\infty}\bfk A^n$ equipped with
\begin{enumerate}
\item the \name{quasi-shuffle product} $*$ with $\emptyset$ the identity element, and satisfying the recursive relation
\begin{align*}
a*b=(a_1,(a_2,\cdots,a_m)*b)
 +(b_1,a*(b_2,\cdots,b_n))+(a_1+b_1,(a_2,\cdots,a_m)*(b_2,\cdots,b_n)),
\end{align*}
where $a=(a_1,a_2,\cdots,a_m)\in A^m$ and $b=(b_1,b_2,\cdots,b_n)\in A^n$;
\item
the deconcatenation coproduct
\begin{align*}
\Delta (a)=\sum_{i=0}^m(a_1,\cdots,a_i)\otimes(a_{i+1},\cdots,a_m)
\tforall a=(a_1,a_2,\cdots,a_m)\in A^m;
\end{align*}
\item
the counit
$$\epsilon:\bfk\langle A\rangle\rightarrow\bfk,\quad \epsilon(a)=\delta_{a,\emptyset} \tforall a\in A^m;$$
\item
the submodules $\bfk\langle A\rangle^{(n)}=\oplus_{i=0}^n\bfk A^i$.
\end{enumerate}

Taking $A$ to be the additive semigroup $\PP$ (resp. the additive monoid $\NN$), we obtain the quasi-shuffle Hopf algebra $\QS(\PP)=\bfk \C$ (resp. $\QS(\NN)=\bfk \WC$) of compositions (resp. weak compositions).
Similarly, taking $A$ to be the additive semigroup $\NN\times \NN=\Big\{\left.\wvec{s}{r}\,\right|\,s\in \NN,r\in \NN\Big\}$ with the vertical notation and the componentwise addition $\wvec{s}{r}+\wvec{s'}{r'}=\wvec{s+s'}{r+r'}$, we obtain the connected filtered Hopf algebra $\QS(\NN\times \NN)=\bfk \WBC$ of weak bicompositions.

Most useful to us is the quasi-shuffle Hopf algebra
$$\bwcc:=\QS(\NN\times \PP),$$
called the \name{Hopf algebra of \dwcs}.
It has a canonical basis
\begin{equation} \mlabel{eq:mbasis}
\DWC:=\Big\{\left . \wvec {\alpha}{\beta}\,\right|\,\alpha\in\mathbb{N}^k,\beta\in\mathbb{P}^k,k\in\mathbb{N}\Big\}.
\end{equation}
whose elements will be called the \name{\dwcs}, regarded as the weak composition $\alpha$ in direction $\beta$. The role of the \dwcs is to represent a perturbation (or regularization) of weak composition quasisymmetric function for $\alpha$ in the direction $\beta$. See Eq.~\meqref{eq:mapphia}. Thus the \dwcs play a role similar to the Feynman diagrams representing regularized Feynman integrals in the original work of Connes and Kreimer~\mcite{CK2000}.

\subsection{Algebras from weak bicompositions}

Let $X:=\{x_i|i\in\PP\}$ and $Y:=\{y_i|i\in\PP\}$ be two sets of mutually disjoint and commuting variables.
Given a weak bicomposition $\wvec{\alpha}{\beta}=\wvec{\alpha_1, \alpha_2, \cdots, \alpha_k}{\beta_1,\beta_2,\cdots,\beta_k}$ with
$\wvec{\alpha_i}{\beta_i}\neq \wvec{0}{0}$ for all $i=1,2,\cdots,k$,
the \name{bimonomial quasisymmetric function} indexed by $\wvec{\alpha}{\beta}$ is
\begin{equation} \mlabel{eq:bimqsym}
M_{\wvec{\alpha}{\beta}}(X,Y):=\sum_{i_1<i_2<\cdots<i_k}x_{i_1}^{\alpha_1}x_{i_2}^{\alpha_2}\cdots x_{i_k}^{\alpha_k}y_{i_1}^{\beta_1}y_{i_2}^{\beta_2}\cdots y_{i_k}^{\beta_k}.
\end{equation}
The notion was introduced by J.-C Aval, F. Bergeron and N. Bergeron in \cite{ABB06} as a more general context for quasisymmetric functions.

Let $\alpha=(\alpha_1,\cdots,\alpha_k)$ be a composition of length $k$ and $I:=\{i_1<\cdots<i_k\}$ be an ordered $k$-subset of $\PP$. We will use the abbreviations 
\begin{equation}\mlabel{eq:abb}
x_I^\alpha:=x_{i_1}^{\alpha_1}\cdots x_{i_k}^{\alpha_k},\quad  I^\alpha:=i_1^{\alpha_1}\cdots i_k^{\alpha_k}, \quad \sum_{I}a_Ix_I^\alpha:=\sum_{i_1<\cdots<i_k} a_{i_1,\cdots,i_k} x_{i_1}^{\alpha_1}\cdots x_{i_k}^{\alpha_k}
\end{equation}
for $a_{i_1,\cdots,i_k}\in \bfk$. Here the sum runs over all ordered $k$-subsets $I$ of $\PP$. Then
$M_{\wvec{\alpha}{\beta}}=\sum\limits_{I} x_I^\alpha y_I^\beta.$

\begin{prop} \mlabel{pp:abb}
$($\cite{ABB06}$)$ The multiplication of two bimonomial quasisymmetric functions $M_{\wvec{\alpha}{\beta}}$ in Eq.~\meqref{eq:bimqsym} satisfy the quasi-shuffle relation:
	$$M_{\wvec{\alpha}{\beta}}(X,Y) M_{\wvec{\alpha'}{\beta'}}(X,Y)=M_{\wvec{\alpha}{\beta}\ast \wvec{\alpha'}{\beta'}}(X,Y)$$
	for the quasi-shuffle product $\wvec{\alpha}{\beta}\ast \wvec{\alpha'}{\beta'}$ of bicompositions. Here for a linear combination $\sum\limits_{\gamma\in \WBC} a_{\gamma} \gamma$ of bicompositions, we use the notation 
	$$M_{\sum\limits_{\gamma\in \WBC} a_{\gamma} \gamma}(X,Y):= \sum_{\gamma\in \WBC}a_{\gamma} M_{\gamma}(X,Y).$$
\end{prop}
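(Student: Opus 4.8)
The plan is to expand both factors using the defining sum in Eq.~\eqref{eq:bimqsym} and then reorganize the resulting double sum according to the pattern by which the two index sets overlap. Write $\wvec{\alpha}{\beta}$ of length $m$ and $\wvec{\alpha'}{\beta'}$ of length $n$. Using the abbreviation from Eq.~\eqref{eq:abb}, I would start from
\begin{equation*}
M_{\wvec{\alpha}{\beta}}M_{\wvec{\alpha'}{\beta'}}=\Big(\sum_{I}x_I^\alpha y_I^\beta\Big)\Big(\sum_{J}x_J^{\alpha'}y_J^{\beta'}\Big)=\sum_{I,J}x_I^\alpha x_J^{\alpha'}y_I^\beta y_J^{\beta'},
\end{equation*}
where $I=\{i_1<\cdots<i_m\}$ and $J=\{j_1<\cdots<j_n\}$ range over all ordered subsets of $\PP$ of the respective sizes. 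The key observation is that the $x$- and $y$-variables are indexed by the \emph{same} set $\PP$, so whenever an index is shared, say $i_s=j_t$, the corresponding exponents add: the factor contributed at that index is $x_{i_s}^{\alpha_s+\alpha'_t}y_{i_s}^{\beta_s+\beta'_t}$. This is exactly the monomial attached to the bicomposition entry $\wvec{\alpha_s}{\beta_s}+\wvec{\alpha'_t}{\beta'_t}$ under the componentwise addition on $\NN\times\NN$ that defines the quasi-shuffle product on $\bfk\WBC$.

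Next I would classify each pair $(I,J)$ by the ordered union $K=I\cup J=\{k_1<\cdots<k_N\}$ together with its \textbf{overlap type}, recording for each $k_s$ whether it lies in $I$ only, in $J$ only, or in $I\cap J$; this data must be compatible with the increasing orders of $I$ and $J$, so it is precisely an interleaving of the positions $1,\dots,m$ and $1,\dots,n$ in which some positions are identified. Such an interleaving is exactly a quasi-shuffle of the two length-sequences, and it determines a single bicomposition $\wvec{\gamma}{\delta}$, obtained by placing the $I$-only and $J$-only entries in their interleaved order and adding the paired entries componentwise; this $\wvec{\gamma}{\delta}$ is one of the terms of $\wvec{\alpha}{\beta}\ast\wvec{\alpha'}{\beta'}$. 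For a fixed overlap type, the pairs $(I,J)$ realizing it are in bijection with the ordered subsets $K$ of $\PP$ of size $N=\ell(\gamma)$, and summing their monomials reproduces exactly $M_{\wvec{\gamma}{\delta}}$. Summing over all overlap types, equivalently over all terms of $\wvec{\alpha}{\beta}\ast\wvec{\alpha'}{\beta'}$ counted with multiplicity, then yields $M_{\wvec{\alpha}{\beta}\ast\wvec{\alpha'}{\beta'}}$ via the linearity convention stated in the proposition.

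Equivalently, one may argue by induction on $m+n$ using the recursive definition of $\ast$: isolating the smallest index appearing in $I\cup J$ splits the double sum into three pieces, according to whether it comes from $I$ alone, from $J$ alone, or from both, and these match term-by-term the three summands in the recursion for $a\ast b$, with the induction hypothesis applied to the remaining indices (handled cleanly by proving the identity for index sets ranging over an arbitrary final segment of $\PP$); the base case, one factor empty, is trivial. I expect the main obstacle to be the bookkeeping of the classification step: making the bijection between pairs $(I,J)$ and pairs (quasi-shuffle term $\wvec{\gamma}{\delta}$, realizing set $K$) fully precise, and verifying that the exponent additions at shared indices correspond exactly to the componentwise additions defining $\ast$ on $\NN\times\NN$. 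One should also confirm that the constraint $\wvec{\alpha_i}{\beta_i}\neq\wvec{0}{0}$ ensuring Eq.~\eqref{eq:bimqsym} is well defined persists, which holds since each entry of a quasi-shuffle term is either a nonzero entry of one factor or a sum of such entries, hence nonzero.
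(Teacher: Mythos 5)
Your proof is correct; note that the paper itself gives no argument for this proposition at all --- it is quoted from the reference \cite{ABB06} --- so your write-up supplies an argument the paper leaves to the literature, and it is the standard one. The bijective classification is sound: a pair $(I,J)$ of ordered subsets of $\PP$ of sizes $m$ and $n$ is equivalent to the data of its ordered union $K$ together with the membership pattern of each element of $K$ (in $I$ only, in $J$ only, or in both), and such patterns are exactly the terms in the expansion of $\wvec{\alpha}{\beta}\ast \wvec{\alpha'}{\beta'}$, with the componentwise addition on $\NN\times\NN$ matching the addition of exponents of $x$ and of $y$ at shared indices. For a fixed pattern, the pairs $(I,J)$ realizing it are indeed in bijection with the ordered subsets $K$ of $\PP$ of the appropriate size, so the inner sum reproduces $M_{\wvec{\gamma}{\delta}}$ and the double sum reorganizes into $M_{\wvec{\alpha}{\beta}\ast\wvec{\alpha'}{\beta'}}$. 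Your two side remarks are also exactly the right ones: the inductive variant requires the strengthened hypothesis over final segments of $\PP$ (after stripping the least index, the remaining sums run over indices exceeding it), and the condition $\wvec{\alpha_i}{\beta_i}\neq\wvec{0}{0}$ persists under $\ast$ because entries of $\NN\times\NN$ add without cancellation, so every term of the quasi-shuffle product indexes a well-defined bimonomial quasisymmetric function.
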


For our purpose, we introduce another class of formal power series.

\begin{defn} \mlabel{de:stqsym}
For an \ulwb $\wvec{\alpha}{\beta}$, define the {\bf \gslwqsym} of $\wvec{\alpha}{\beta}$ to be
\footnote{The term Stirling is used since the coefficients in the expansion of $\widehat{M}_{\wvec{\alpha}{\beta}}$ in terms of monomial left weak quasisymmetric functions involve Stirling numbers of the second kind, see Remark~\mref{rk:cstir} and Proposition \mref{prop:transfMtohatM}.}
\begin{equation}\mlabel{eq:sqsym1}
\widehat{M}_{\wvec{\alpha}{\beta}}:=\sum_{I} I^\beta x_I^\alpha=\sum_{i_1<i_2<\cdots<i_k}i_1^{\beta_1}i_2^{\beta_2}\cdots i_k^{\beta_k}x_{i_1}^{\alpha_1}x_{i_2}^{\alpha_2}\cdots x_{i_k}^{\alpha_k}.
\end{equation}
Let $\GSym$
denote the $\QQ$-submodule of $\QQ[[X]]$ spanned by all $\widehat{M}_{\wvec{\alpha}{\beta}}$ with $\wvec{\alpha}{\beta}$ upper-left weak.
\end{defn}

Note that the expression for bimonomial quasisymmetric functions
$M_{\wvec{\alpha}{\beta}}(X,Y)$ in Eq. \eqref{eq:bimqsym} still make sense for an \ulwb $\wvec{\alpha}{\beta}$ and Proposition~\mref{pp:abb} also holds. Then the \gslwqsyms are specializations of bimonomial quasisymmetric functions corresponding to \ulwbs by taking $y_j$ to be $j, j\in \PP$, and the specialization map preserves products. Thus from Proposition~\mref{pp:abb} we obtain

\begin{coro} \mlabel{co:stqsh} The module $\GSym$ is a subalgebra of $\QQ[[X]]$ and the multiplication of two \gslwqsyms observes the quasi-shuffle relation:
$$ \widehat{M}_{\wvec{\alpha}{\beta}} \widehat{M}_{\wvec{\alpha'}{\beta'}} =
\widehat{M}_{\wvec{\alpha}{\beta}\ast \wvec{\alpha'}{\beta'}}.$$
\end{coro}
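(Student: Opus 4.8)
The plan is to deduce the result from the quasi-shuffle relation for bimonomial quasisymmetric functions (Proposition~\mref{pp:abb}) by specializing the $Y$-variables, as suggested in the text preceding the statement. Concretely, I would introduce the substitution $\varphi\colon y_j\mapsto j$ ($j\in\PP$), leaving the $x_i$ fixed, and apply $\varphi$ to the identity
$$M_{\wvec{\alpha}{\beta}}(X,Y)\,M_{\wvec{\alpha'}{\beta'}}(X,Y)=M_{\wvec{\alpha}{\beta}\ast\wvec{\alpha'}{\beta'}}(X,Y).$$
By the very definitions in Eqs.~\meqref{eq:bimqsym} and~\meqref{eq:sqsym1} we have $\varphi\big(M_{\wvec{\alpha}{\beta}}(X,Y)\big)=\widehat{M}_{\wvec{\alpha}{\beta}}$, so the claimed formula falls out once two points are justified: that $\varphi$ is well defined on the bimonomial quasisymmetric functions attached to \ulwbs (a convergence issue), and that $\varphi$ is multiplicative on them.

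For well-definedness, I would show that for an \ulwb $\wvec{\alpha}{\beta}$ (so $\alpha_k>0$) each $X$-monomial occurs in $M_{\wvec{\alpha}{\beta}}(X,Y)$ for only finitely many index sets $I=\{i_1<\cdots<i_k\}$. Fixing the monomial $x_I^\alpha$ pins down every index $i_j$ with $\alpha_j>0$; an index $i_j$ with $\alpha_j=0$ is then constrained by $i_{j-1}<i_j<i_{j+1}$, and since $\alpha_k>0$ there is always a larger index carrying a positive exponent, so $i_j$ is bounded above and ranges over a finite set. Hence the coefficient of each $X$-monomial, viewed in $\QQ[[Y]]$, is in fact a polynomial in the $y$'s (a finite sum of monomials $y_I^\beta$), so $y_j\mapsto j$ produces a well-defined rational number and $\varphi\big(M_{\wvec{\alpha}{\beta}}\big)=\sum_I I^\beta x_I^\alpha=\widehat{M}_{\wvec{\alpha}{\beta}}$. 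The same finiteness applies to the right-hand side once I note that the quasi-shuffle product of two \ulwbs is a linear combination of \ulwbs: by the recursion for $\ast$, the final component of every resulting bicomposition has $\alpha$-entry equal to $\alpha_k$, $\alpha'_{k'}$, or $\alpha_k+\alpha'_{k'}$, each positive.

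For multiplicativity, I would fix an $X$-monomial $m$ and compare coefficients. Since $m$ factors as $m=m'm''$ in only finitely many ways, the coefficient of $m$ in $M_{\wvec{\alpha}{\beta}}M_{\wvec{\alpha'}{\beta'}}$ is the finite Cauchy sum $\sum_{m'm''=m} c'(m')\,c''(m'')$, where $c'(m'),c''(m'')\in\QQ[Y]$ are the polynomial $Y$-coefficients of the two factors. Because the finite substitution $\QQ[Y]\to\QQ$, $y_j\mapsto j$, is a ring homomorphism, it commutes with this finite product and sum, so the coefficient of $m$ in $\varphi\big(M_{\wvec{\alpha}{\beta}}M_{\wvec{\alpha'}{\beta'}}\big)$ equals the coefficient of $m$ in $\widehat{M}_{\wvec{\alpha}{\beta}}\widehat{M}_{\wvec{\alpha'}{\beta'}}$. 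Matching all $X$-monomials gives $\varphi\big(M_{\wvec{\alpha}{\beta}}M_{\wvec{\alpha'}{\beta'}}\big)=\widehat{M}_{\wvec{\alpha}{\beta}}\widehat{M}_{\wvec{\alpha'}{\beta'}}$, and combining with Proposition~\mref{pp:abb} and the second paragraph yields $\widehat{M}_{\wvec{\alpha}{\beta}}\widehat{M}_{\wvec{\alpha'}{\beta'}}=\varphi\big(M_{\wvec{\alpha}{\beta}\ast\wvec{\alpha'}{\beta'}}\big)=\widehat{M}_{\wvec{\alpha}{\beta}\ast\wvec{\alpha'}{\beta'}}$. This simultaneously shows that $\GSym$ is closed under multiplication, hence a subalgebra of $\QQ[[X]]$, and establishes the quasi-shuffle relation.

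The main obstacle is the convergence issue underlying the specialization: the global substitution $y_j\mapsto j$ does not extend to a ring homomorphism of $\QQ[[X,Y]]$, so one must restrict to the series attached to \ulwbs and exploit the condition $\alpha_k>0$ to guarantee that each $X$-monomial has finite $Y$-support. This local finiteness is exactly what makes the substitution both defined and multiplicative; everything else is routine bookkeeping.
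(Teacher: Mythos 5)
Your proposal is correct and follows essentially the same route as the paper: the paper likewise deduces the corollary by specializing $y_j\mapsto j$ in the quasi-shuffle relation of Proposition~\ref{pp:abb} for bimonomial quasisymmetric functions attached to \ulwbs, asserting that this specialization is defined and multiplicative. Your write-up merely makes explicit the finiteness argument (each $X$-monomial has finite $Y$-support because $\alpha_k>0$, and the product of \ulwbs under $\ast$ is again a combination of \ulwbs) that the paper leaves implicit.
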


\section{Regularization of weak composition quasisymmetric functions}
\mlabel{sec:reg}

In this section, we temporally assume $\bfk =\RR$ in order to study the convergency of regularized quasisymmetric functions for weak compositions.

As noted in the introduction, our purpose of the paper is to make sense of the expressions
\begin{align}\mlabel{eq:malphamonomiabasis}
M_{\alpha}:=\sum_{I} x_I^\alpha
=\sum_{i_1<i_2<\cdots<i_k}x_{i_1}^{\alpha_1}x_{i_2}^{\alpha_2}\cdots x_{i_k}^{\alpha_k}
\end{align}
for all weak compositions $\alpha$. To begin with, we make a simple observation.

\begin{lemma} \mlabel{lem:wccong}
For any weak composition $\alpha$, the expression $M_\alpha$ in Eq.~\eqref{eq:malphamonomiabasis} is a well-defined formal power series in $\bfk[[X]]$ if and only if $\alpha$ is a left weak composition.
\end{lemma}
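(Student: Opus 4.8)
The plan is to read ``well-defined formal power series in $\bfk[[X]]$'' as the condition that every monomial in the variables $X$ occurs with a finite coefficient, equivalently that only finitely many index tuples $i_1<\cdots<i_k$ in the defining sum contribute to any fixed monomial. The first step I would carry out is the structural reduction that isolates where divergence can arise: whenever $\alpha_j=0$ the factor $x_{i_j}^{\alpha_j}$ equals $1$ and disappears, so a term $x_{i_1}^{\alpha_1}\cdots x_{i_k}^{\alpha_k}$ depends only on the subtuple $(i_j)_{j\in S}$ indexed by the support $S:=\{j\in[k]\mid \alpha_j>0\}$. I would then check that, because the full tuple is strictly increasing, distinct strictly increasing $S$-subtuples give distinct monomials; consequently the coefficient of a monomial equals the number of ways to complete its prescribed $S$-values to a strictly increasing $k$-tuple by inserting values in the positions outside $S$.

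With this reduction in hand, the heart of the argument is a finiteness dichotomy for that completion count. For sufficiency I would assume $\alpha$ is left weak, so $\alpha_k>0$ and $k\in S$. Each free position $j\notin S$ then satisfies $j<k=\max S$, so it lies strictly between two consecutive elements of $S$ or below $\min S$; in either case the corresponding coordinate $i_j$ is bounded above by a fixed $S$-value and ranges over a bounded set of integers. The number of completions is therefore finite for every monomial, and $M_\alpha\in\bfk[[X]]$ is well defined. For necessity I would assume $\alpha_k=0$, so $k\notin S$, and exhibit one monomial of infinite coefficient. If $S=\emptyset$ every term is $1$ and the constant term diverges exactly as in Eq.~\meqref{eq:M0ns}. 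Otherwise, with $j_0:=\max S<k$, I would fix $i_j=j$ for $j\leq j_0$ and let the tail coordinates $i_{j_0+1}<\cdots<i_k$ run over all strictly increasing integers exceeding $j_0$; these infinitely many tuples all collapse to the single monomial $\prod_{j\in S}x_j^{\alpha_j}$ since the tail positions carry exponent $0$, forcing an infinite coefficient.

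I expect the main obstacle to be the bookkeeping in the structural reduction rather than either implication: one must argue cleanly that the positive positions genuinely record the monomial, i.e. that $(i_j)_{j\in S}\mapsto \prod_{j\in S}x_{i_j}^{\alpha_j}$ is injective on strictly increasing subtuples, and that the completion count factors over the ``gaps'' determined by $S$. Once the divergence is localized to a possible unbounded tail beyond $\max S$, the equivalence is immediate, with the left weak condition $\alpha_k>0$ being precisely the absence of such a tail.
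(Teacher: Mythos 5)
Your proposal is correct, and it is worth comparing with the paper's treatment, which is much terser. For the ``only if'' direction the paper's argument is the same idea as yours in compressed form: it factors the sum as $\sum_{i_1<\cdots<i_{k-1}}x_{i_1}^{\alpha_1}\cdots x_{i_{k-1}}^{\alpha_{k-1}}\big(\sum_{i_k>i_{k-1}}x_{i_k}^{0}\big)$ and observes that the inner sum is an infinite sum of $1$'s; your version pins this down more carefully by exhibiting a single concrete monomial $\prod_{j\in S}x_j^{\alpha_j}$ whose coefficient receives infinitely many contributions from the unbounded tail beyond $\max S$, which is the cleanest way to say what ``not a well-defined element of $\bfk[[X]]$'' means. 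The real difference is in the ``if'' direction: the paper gives no argument at all, simply citing \cite{YGZ17} for the fact that $M_\alpha$ is well defined when $\alpha$ is left weak, whereas you prove it directly via the completion-counting argument (every free coordinate $i_j$ with $j\notin S$ sits below some prescribed $S$-value because $k\in S$, so each monomial's coefficient counts finitely many completions). So your route is self-contained and makes the notion of well-definedness (finiteness of every coefficient) explicit in both directions, at the cost of the support-set bookkeeping you flag; the paper buys brevity by outsourcing half the lemma to prior work. One small remark: the injectivity of $(i_j)_{j\in S}\mapsto\prod_{j\in S}x_{i_j}^{\alpha_j}$ that you worry about is not strictly needed even for sufficiency --- it is enough that the $S$-coordinates of any tuple producing a fixed monomial $\mu$ must be indices of variables occurring in $\mu$, of which there are finitely many --- though the injectivity does hold, since both subtuples would be the increasing enumeration of the variable set of $\mu$.
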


\begin{proof}
For a left weak composition $\alpha=(\alpha_1,\alpha_2,\cdots,\alpha_k)$, the {\bf left weak monomial quasisymmetric function} $M_\alpha$ is a well-defined power series and was studied in~\cite{YGZ17}.

If $\alpha$ is not left weak, then $\alpha_k=0$. Then we have
$$ M_\alpha=\sum_{i_1<\cdots< i_{k-1}}x_{i_1}^{\alpha_1} \cdots x_{i_{k-1}}^{\alpha_{k-1}} \Big(\sum_{i_k=i_{k-1}+1}^\infty x_{i_k}^{\alpha_k} \Big)
$$
which is not well defined since $\sum\limits_{i_k=i_{k-1}+1}^\infty x_{i_k}^{\alpha_k} =\sum\limits_{i_k=i_{k-1}+1}^\infty x_{i_k}^0$ is an infinite sum of $1$.
\end{proof}

To derive a well-defined formal power series from the expression in Eq.~\eqref{eq:malphamonomiabasis} when $\alpha$ is not left weak, we apply the Algebraic Birkhoff Factorization (Theorem~\mref{thm:Abdt}) in the algebraic approach of Connes and Kreimer~\cite{CK2000} to renormalization in perturbative quantum field theory. In practice we first give a regularization of such a formal expressions so that it makes sense and lands in a suitable Rota-Baxter algebra, and then take the renormalization by the Algebraic Birkhoff Factorization which will be carried out in Section~\mref{sec:renorm}.
Our regularization is of heat-kernel type as employed in~\cite{GPXZ,GZ08} for multiple zeta values.

For $\wvec{\alpha}{\beta}=\wvec{\alpha_1, \alpha_2, \cdots, \alpha_k}{\beta_1,\beta_2,\cdots,\beta_k}\in \DWC$ and $t\geq 0$, $\mre(z)<0$, we define
\begin{align}\mlabel{eq:mapphia}
\phi \bwvec{\alpha}{\beta}:=\phi \bwvec{\alpha}{\beta}(t):=\phi \bwvec{\alpha}{\beta}(t,z)
:=\sum_{i_1<i_2< \cdots<i_k}x_{i_1}^{\alpha_1}x_{i_2}^{\alpha_2}\cdots x_{i_k}^{\alpha_k}e^{(i_1+t)\beta_1 z}e^{(i_2+t)\beta_2 z}\cdots e^{(i_k+t)\beta_k z}.
\end{align}

\begin{theorem} \mlabel{thm:phi(lastpartlt0)}
The assignment in Eq.~\meqref{eq:mapphia} defines an algebra homomorphism
\begin{equation}\mlabel{eq:phi}
\phi: \bwcc\to \GSym[t][{z}^{-1}, {z}]],
\end{equation}
where $\GSym$ is the algebra of \gslwqsyms in Definition~\mref{de:stqsym}.
If $\wvec {\alpha}{\beta}$ is in $\DWC$ and $\alpha$ is a left weak composition, then
	$\phi\bwvec {\alpha}{\beta}$ is an element of $\GSym[t][[z]]$.
\end{theorem}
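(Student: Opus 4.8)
The plan is to define $\phi$ on the canonical basis $\DWC$ by Eq.~\meqref{eq:mapphia}, extend it $\QQ$-linearly, and check three things: that each $\phi\bwvec{\alpha}{\beta}$ lies in $\GSym[t][z^{-1},z]]$, that $\phi$ is multiplicative (with $\phi(\emptyset)=1$), and that a left weak $\alpha$ forces the image into the subring $\GSym[t][[z]]$. Everything rests on one convergence observation: since every $\beta_j\in\PP$ and $\mre(z)<0$, each factor $e^{(i_j+t)\beta_j z}$ decays geometrically as $i_j\to\infty$, so the iterated sum in Eq.~\meqref{eq:mapphia} converges on the left half-plane and represents an analytic function of $z$ whose Laurent expansion at $z=0$ is the candidate image; the work lies in identifying its coefficients.

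For well-definedness I would induct on the number of zero components of $\alpha$. When $\alpha_j=0$, the sum over $i_j$ is geometric---semi-infinite for a trailing zero and finite otherwise, for instance when $i_j$ is trapped between two indices,
\[
\sum_{i_j=i_{j-1}+1}^{i_{j+1}-1} e^{(i_j+t)\beta_j z}=\frac{e^{(i_{j-1}+1+t)\beta_j z}-e^{(i_{j+1}+t)\beta_j z}}{1-e^{\beta_j z}}.
\]
The scalar factor $\frac{1}{1-e^{\beta_j z}}$ has a simple pole at $z=0$ with Laurent coefficients in $\QQ$, while each numerator term $e^{(i\pm 1+t)\beta_j z}$ is absorbed into a neighboring exponential, raising that neighbor's direction from $\beta$ to $\beta+\beta_j\in\PP$. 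Thus summing out one zero component rewrites $\phi\bwvec{\alpha}{\beta}$ as a $\QQ$-combination of scalar Laurent factors times $\phi$ of directional weak compositions with fewer components and one fewer zero. The induction terminates at a directional weak composition whose $\alpha$-part is all positive, which is treated by the simultaneous expansion described in the final paragraph. Collecting terms shows $\phi\bwvec{\alpha}{\beta}\in\GSym[t][z^{-1},z]]$, the negative powers of $z$ arising solely from the factors $\frac{1}{1-e^{\beta z}}$.

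For multiplicativity I would note that $\phi\bwvec{\alpha}{\beta}$ arises from the bimonomial quasisymmetric function $M_{\wvec{\alpha}{\beta}}(X,Y)$ through the multiplicative substitution $y_i\mapsto e^{(i+t)z}$, which sends $y_{i_j}^{\beta_j}$ to $e^{(i_j+t)\beta_j z}$. The quasi-shuffle identity $M_{\wvec{\alpha}{\beta}}M_{\wvec{\alpha'}{\beta'}}=M_{\wvec{\alpha}{\beta}\ast\wvec{\alpha'}{\beta'}}$ of Proposition~\mref{pp:abb} is obtained by interleaving the two increasing index sequences, the only mechanism being that coincident indices add their exponents. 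Since $e^{(i+t)\beta z}\,e^{(i+t)\beta' z}=e^{(i+t)(\beta+\beta')z}$ realizes exactly the componentwise addition $\wvec{s}{r}+\wvec{s'}{r'}=\wvec{s+s'}{r+r'}$ defining the quasi-shuffle on $\NN\times\PP$ (and $\PP$ is closed under addition, so no component escapes the semigroup), the same interleaving argument transports the identity to $\phi$. The products converge by the well-definedness just shown, so $\phi\bwvec{\alpha}{\beta}\,\phi\bwvec{\alpha'}{\beta'}=\phi\big(\wvec{\alpha}{\beta}\ast\wvec{\alpha'}{\beta'}\big)$, and $\phi$ is an algebra homomorphism after linear extension.

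Finally, when $\alpha$ is left weak we have $\alpha_k>0$, so no unbounded zero component occurs: every position with $\alpha_j=0$ has its index trapped below the next positive position, making the inner sum finite. I would then expand all the exponentials simultaneously,
\[
\prod_{j} e^{(i_j+t)\beta_j z}=\sum_{(n_j)}\Big(\prod_{j}\frac{\beta_j^{n_j}}{n_j!}\Big)z^{\sum_j n_j}\prod_{j}(i_j+t)^{n_j},
\]
and expand each $(i_j+t)^{n_j}$ by the binomial theorem. Each coefficient of $z^N$ is then a finite $\QQ[t]$-combination of the sums $\sum_{i_1<\cdots<i_k}\prod_j i_j^{m_j}x_{i_j}^{\alpha_j}=\widehat{M}_{\wvec{\alpha}{(m_1,\dots,m_k)}}$, which lie in $\GSym$ precisely because $\alpha_k>0$ makes $\wvec{\alpha}{m}$ upper-left weak (interior zeros of $\alpha$ being harmless, the corresponding index being bounded); as only nonnegative powers $z^N$ appear, $\phi\bwvec{\alpha}{\beta}\in\GSym[t][[z]]$. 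I expect the main obstacle to be the well-definedness step with zero components: checking that the neighbor-absorption bookkeeping terminates and that the resulting Laurent coefficients genuinely lie in $\GSym[t]$, together with justifying the interchange of the $z$-expansion and the infinite index summation---legitimate because of the $\mre(z)<0$ convergence and because each fixed power of $z$ receives only finitely many contributions.
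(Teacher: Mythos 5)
Your proposal is correct, and its three pillars---geometric summation of the exponentials for $\mre(z)<0$, multiplicativity via the substitution $y_i\mapsto e^{(i+t)z}$ into bimonomial quasisymmetric functions (Proposition~\mref{pp:abb}), and expansion of the left-weak case into the functions $\widehat{M}_{\wvec{\alpha}{m}}$ (legitimate precisely because $\alpha_k>0$ keeps the indices of internal zero components bounded)---are exactly the pillars of the paper's proof (Propositions~\mref{pp:phialbeqtzx} and~\mref{pp:gsym}). Where you genuinely diverge is in how zero components are handled. The paper never eliminates internal zeros: it works coefficient-by-coefficient in $X$, observing that for a fixed monomial the sum factors into finite blocks plus one infinite trailing block equal to a product of factors of the form $\frac{e^{\beta z}}{1-e^{\beta z}}$ as in Eq.~\meqref{eq:phiend}; it must then prove uniform bounds on pole orders and $t$-degrees (Item~\mref{it:phia2} of Proposition~\mref{pp:phialbeqtzx}) to justify the summation-exchange isomorphisms $W$ and $U$ converting a power series in $X$ with Laurent coefficients into a Laurent series with coefficients in $\GSym[t]$, and it factors trailing zeros off en bloc via Lemma~\mref{lem:phisor=sr*0}. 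You instead remove one zero at a time through the two-term geometric identity with neighbor absorption, reducing recursively to the all-positive case. This buys a purely algebraic reduction: your output is manifestly a finite sum of products of scalars in $\QQ[t][z^{-1},z]]$ with elements of $\GSym[t][[z]]$, hence lies in $\GSym[t][z^{-1},z]]$ with no summation-exchange lemma needed (absolute convergence alone licenses reordering the multiple sum). What it costs is the explicit Laurent-coefficient formula of Eq.~\meqref{eq:coef}, which the paper's route produces and then reuses downstream (for instance in Corollary~\mref{cor:phi+=phiskneq0} and the examples). One caution: your recursion applied to a left weak $\alpha$ with internal zeros introduces apparent poles $\frac{1}{1-e^{\beta_j z}}$ that in fact cancel between the two absorption terms, so the recursion alone cannot yield the refined conclusion $\phi\bwvec{\alpha}{\beta}\in\GSym[t][[z]]$; your separate direct expansion for the left-weak case is therefore not optional but essential, and you correctly included it.
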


\begin{proof}
The proof is accomplished in the rest of this section, in two steps:
\smallskip

\noindent {\bf Step 1.}  We show that Eq.~\meqref{eq:mapphia} defines an algebra homomorphism
$$ \phi: \bwcc\to \QQ[[X]][t][z^{-1},z]].$$ This is proved in Proposition~\mref{pp:phialbeqtzx}.
\smallskip

\noindent
{\bf Step 2. }
We shows that the image of $\phi$ is in $\GSym[t][{z}^{-1}, {z}]]$ with the stated property when $\alpha$ is left weak. This is achieved in Proposition~\mref{pp:gsym}.
\end{proof}

\begin{defn}\mlabel{defn:regularizaion}
	We call the map $\phi$ in Eq.~\meqref{eq:phi} the \name{regularization map}, and  call $\phi\bwvec{\alpha}{\beta}$ the \name{regularization} of the formal expression $M_{\alpha}$ in Eq.~\meqref{eq:malphamonomiabasis} in the direction $\beta$.
\end{defn}

\begin{remark}
By Theorem~\mref{thm:phi(lastpartlt0)}, for each $\wvec {\alpha}{\beta}\in \DWC$, $\phi\bwvec {\alpha}{\beta}$ is an element in $\QQ [t][z^{-1},z]][[X]]$ and hence makes sense in $\bfk [t][z^{-1},z]][[X]]$ for any commutative ring $\bfk$ containing $\QQ$.
\end{remark}

Recall that near $z=0$,
\begin{align*}
\frac {z}{e^z-1}=\sum_{s=0}^{\infty}\frac{B_s}{s!}z^s,
\end{align*}
where $B_s \in \QQ $ is the $s$-th Bernoulli number. Thus for $z$ with real part $\mre (z)<0$, the series $\sum\limits_{s=1}^\infty e^{sz}$ is absolutely convergent and has a Laurent series expansion in $z^{-1}\QQ[[z]]\subset \QQ [z^{-1},z]]$:
\begin{align}\mlabel{eq:eiz1-eiz}
\sum_{s=1}^\infty e^{sz}=\frac {e^z}{1-e^z}=\sum_{s=0}^{\infty}\frac{b_s}{s!}z^{s-1},
\end{align}
where $b_0=-B_0=-1$, $b_1=-1-B_1=-\frac{1}{2}$ and $b_{s}=-B_{s}$ for $s\geq2$.

\begin {prop}
\begin{enumerate}
\item
Let $\wvec{\alpha}{\beta}=\wvec{\alpha_1, \alpha_2, \cdots, \alpha_k}{\beta_1,\beta_2,\cdots,\beta_k}$ be in $\DWC$. The expression $\phi\bwvec{\alpha}{\beta}$ in Eq.~\meqref{eq:mapphia} is a power series in $X$ whose coefficients have Laurent series expansion in $\QQ[t][z^{-1},z]]$. More precisely, 
taking $\bar\alpha$ to be the composition of positive entries of $\alpha$ and applying the abbreviations in Eq.~\meqref{eq:abb}, we have 
$\phi\bwvec{\alpha}{\beta}=\sum\limits_{I} c_{I} x_I^{\bar\alpha}$ in which
the coefficients $c_I$ are functions in $t$ and $z$ with Laurent expansions in $\QQ[t][z^{-1},z]]$:
\begin{equation}\mlabel{eq:phiimage2}
c_I:=\sum_{n\in \ZZ} c_{I,n} z^n \in \QQ[t][z^{-1},z]],
\quad
c_{I ,n}:=\sum_{s\in \NN} c_{I,n,s} t^s\in \QQ[t]
\end{equation}
that are convergent for $t\geq 0$ and $\mre(z)<0$. Thus
\begin{equation}\mlabel{eq:phiimage}
	\sum_{I} c_{I} x_I^{\bar\alpha}  =\sum_{I}\sum_{n\in \ZZ} c_{I ,n} z^n x_I^{\bar\alpha}
	 =\sum_{I}\sum_{n\in \ZZ}\sum_{s\in \NN} c_{I ,n,s} t^s z^n x_I^{\bar\alpha} \in \QQ [t][z^{-1},z]][[X]].
\end{equation}
\mlabel{it:phia1}
\item Let $\alpha_j>0$, $\alpha_{j+1}=\cdots=\alpha_k=0$.
As a Laurent series in $\QQ [t][z^{-1},z]]$ with variable $z$ and coefficients in $\QQ[t]$, the pole order of $c_{I}=\sum\limits_{I,n} c_{I,n}z^n$ in Item~\mref{it:phia1} is uniformly bounded by $k-j$, independent of $I$.  Furthermore, for a given $n\in\ZZ$, the degrees of the polynomials $\sum\limits_{s} c_{I,n,s} t^s$ in $t$ is uniformly bounded by $n+k-j$ independent of $I$.
\mlabel{it:phia2}
\item
The assignment in Eq.~\meqref{eq:mapphia} defines a map for which we still use the same letter
\begin{equation}
\phi: \bwcc \to \QQ[[X]][t][z^{-1},z]].
\mlabel{eq:phi2}
\end{equation}
\mlabel{it:phia3}
\item
The map $\phi$ is an algebra homomorphism.
\mlabel{it:phia4}
\end{enumerate}
\mlabel{pp:phialbeqtzx}
\end{prop}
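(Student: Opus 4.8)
The plan is to treat $\phi\bwvec{\alpha}{\beta}$ directly as a nested, order-constrained sum and then to reduce Items~\mref{it:phia3} and~\mref{it:phia4} to the analytic facts established in Items~\mref{it:phia1} and~\mref{it:phia2}. For Item~\mref{it:phia1} I would first separate the indices sitting at the positive components of $\alpha$ from those at the zero components. Writing $p_1<\cdots<p_m$ for the positions with $\alpha_{p}>0$ and letting $I=\{i_{p_1}<\cdots<i_{p_m}\}$ record the corresponding indices, the $X$-monomial appearing in Eq.~\meqref{eq:mapphia} is exactly $x_I^{\bar\alpha}$, so that $\phi\bwvec{\alpha}{\beta}=\sum_I c_I\,x_I^{\bar\alpha}$, where $c_I$ collects the exponential factors $e^{(i_p+t)\beta_p z}$ at the positive positions together with the sum, over all admissible values of the remaining (zero-component) indices, of the products $\prod e^{(i_q+t)\beta_q z}$. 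Since $\beta\in\PP^k$ every such geometric sum has positive step, and for $\mre(z)<0$ the identity~\meqref{eq:eiz1-eiz} shows each infinite geometric sum converges absolutely; this justifies the rearrangements and gives each summed index a Laurent expansion in $\QQ[t][z^{-1},z]]$. Collecting the $t$- and $z$-coefficients then produces the tower of expansions in Eq.~\meqref{eq:phiimage2}.

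The crux, and the heart of Item~\mref{it:phia2}, is the distinction between the \emph{trailing} zero components, at positions $j+1,\dots,k$, and the zero components lying to the left of position $j$. Any maximal block of the latter is bounded above by the index of a subsequent positive component, so summing it amounts to a \emph{finite} geometric sum $\sum e^{(i_q+t)\beta_q z}$, which is entire in $z$ (with a removable singularity at $z=0$) and therefore contributes no pole. Only the trailing indices $i_{j+1}<\cdots<i_k$, each ranging to $\infty$, are summed via~\meqref{eq:eiz1-eiz}, each producing a single factor of the form $\frac{e^{(\cdot)\beta z}}{1-e^{\beta z}}\sim\frac1{\beta z}$. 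Summing these $k-j$ indices from the inside out thus yields a Laurent series of pole order at most $k-j$, uniformly in $I$. The $t$-degree bound follows from the same picture: $t$ enters only through factors $e^{\beta t z}=\sum_s\frac{(\beta t)^s}{s!}z^s$, in which $t^s$ is paired with $z^s$; since the remaining factors contribute total $z$-order at least $-(k-j)$, extracting the coefficient of $z^n$ forces $s\le n+k-j$, again independently of $I$.

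With Items~\mref{it:phia1} and~\mref{it:phia2} in hand, Item~\mref{it:phia3} is immediate: the monomials $x_I^{\bar\alpha}$ are pairwise distinct, each $c_I$ lies in $\QQ[t][z^{-1},z]]$, and the uniform pole bound $k-j$ lets one collect, for each fixed monomial $t^s z^n$, a well-defined coefficient $\sum_I c_{I,n,s}\,x_I^{\bar\alpha}\in\QQ[[X]]$; hence $\phi$ is valued in $\QQ[[X]][t][z^{-1},z]]$ and~\meqref{eq:phi2} is well defined. For Item~\mref{it:phia4}, I would observe that $\phi\bwvec{\alpha}{\beta}$ is precisely the bimonomial quasisymmetric function $M_{\wvec{\alpha}{\beta}}(X,Y)$ of Eq.~\meqref{eq:bimqsym} under the specialization $y_i\mapsto e^{(i+t)z}$, which is legitimate on $\DWC$ since $\beta_i>0$ forces $\wvec{\alpha_i}{\beta_i}\neq\wvec{0}{0}$. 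As this specialization is a ring homomorphism, the quasi-shuffle identity for bimonomial quasisymmetric functions recorded in Proposition~\mref{pp:abb} transports verbatim to $\phi$, yielding $\phi\bwvec{\alpha}{\beta}\,\phi\bwvec{\alpha'}{\beta'}=\phi(\wvec{\alpha}{\beta}\ast\wvec{\alpha'}{\beta'})$ and hence that $\phi$ is an algebra homomorphism.

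The step I expect to be the main obstacle is the uniform pole-order and $t$-degree control in Item~\mref{it:phia2}: one must carefully organize the nested, order-constrained summation and verify that the interspersed (non-trailing) zero components genuinely contribute no poles while the $k-j$ trailing ones each contribute exactly a simple pole, with all bounds independent of the subset $I$.
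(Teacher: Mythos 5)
Your proposal is correct and follows essentially the same route as the paper: the same block decomposition isolating the trailing zero components (whose infinite sums, via Eq.~\meqref{eq:eiz1-eiz}, produce the $k-j$ simple-pole factors, while interspersed zeros give finite exponential sums), the same $tz$-pairing argument for the $t$-degree bound, and the same specialization $y_i\mapsto e^{(i+t)z}$ of bimonomial quasisymmetric functions to transfer the quasi-shuffle identity of Proposition~\mref{pp:abb}. The only difference is presentational: where you treat the collection of coefficients and exchange of summations informally in Item~\mref{it:phia3}, the paper packages it as explicit algebra isomorphisms $W$ and $U$ on subalgebras with uniformly bounded pole order and $t$-degree, which it then also invokes to transport the homomorphism property into $\QQ[[X]][t][z^{-1},z]]$ in Item~\mref{it:phia4}.
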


The coefficients $c_{I,n,s}$ of $\phi\bwvec{\alpha}{\beta}$ in Eq.~\meqref{eq:phiimage} will be explicitly determined in Proposition~\mref{pp:gsym}.
Before the proof of Proposition~\mref{pp:phialbeqtzx}, we first illustrate by an example.

\begin{exam}\mlabel{exameq:phi0r}
	For any positive integer $r$, when $t\ge 0$ and $\mre (z)<0$, we have
	\begin{align*}
		\phi \bwvec {0}{r}=&\sum_{i=1}^\infty e^{(i+t)rz}=e^{trz}\sum_{i=1}^\infty e^{irz}=\sum_{n=0}^{\infty}\Big(\sum_{i=0}^n\frac{b_{n-i}}{i!(n-i)!}t^i\Big)(rz)^{n-1}\\
		=&-\frac {1}{rz}-\Big(t+\frac 12\Big)-\Big(\frac {t^2}2+\frac t2+\frac 1{12}\Big)rz-\Big(\frac{t^3}{6}+\frac{t^2}{4}+\frac{t}{12}\Big)(rz)^2+\cdots.\notag
	\end{align*}
\end{exam}

\begin {proof}
\mref{it:phia1} Let $\wvec{\alpha}{\beta}=\wvec{\alpha_1, \alpha_2, \cdots, \alpha_k}{\beta_1,\beta_2,\cdots,\beta_k}$.
If $\alpha =0^k$, then for $t\geq 0$ and $\mre (z)<0$, the series
$$\phi \bwvec{\alpha}{\beta}=\sum_{i_1<i_2< \cdots<i_k}e^{(i_1+t)\beta_1 z}e^{(i_2+t)\beta_2 z}\cdots e^{(i_k+t)\beta_k z}$$
is absolutely convergent and
$$\phi \bwvec{\alpha}{\beta}=e^{t(\beta_1+\beta_2+\cdots +\beta_k)z}\frac {e^{\beta _kz}}{1-e^{\beta _kz}}\frac {e^{(\beta _k+\beta _{k-1})z}}{1-e^{(\beta _k
		+\beta _{k-1})z}}\cdots \frac {e^{(\beta _k+\cdots +\beta _{1})z}}{1-e^{(\beta _k+\cdots +\beta _{1})z}}.
$$
The first factor in the product is in $\QQ [[tz]]$ which is a subalgebra of $\QQ [t][[z]]$. All the other factors  are in $\QQ [z^{-1},z]]$ by Eq.~\meqref{eq:eiz1-eiz}. Thus the product is in $\QQ [t][z^{-1},z]]$.

Now assume that $\alpha \not =0^k$ and let $\bar \alpha =(\alpha  _{j_1},\alpha  _{j_2}, \cdots, \alpha _{j_\ell})$, so $\ell =\ell (\bar \alpha )$. Then for fixed $i_{j_1}<i_{j_2}<\cdots <i_{j_{\ell }}$, the coefficient
$c_{i_{j_1},i_{j_2},\cdots,i_{j_{\ell }}}$ for the monomial
$x_{i_{j_1}}^{\alpha _{j_1}}x_{i_{j_2}}^{\alpha _{j_2}}\cdots x_{i_{j_\ell}}^{\alpha _{j_\ell}}$ of $\phi\bwvec{\alpha}{\beta}$ in Eq.~\meqref{eq:phiimage}
is
\begin{equation} \mlabel{eq:reg1}
	\begin{split}
		&\Big(\sum _{0<i_{1}<\cdots <i_{j_1}}e^{(i_{1}+t)\beta_1 z}\cdots e^{(i_{j_1}+t)\beta _{j_1}z}\Big)
		\Big(\sum _{i_{j_1}<i_{j_1+1}<\cdots<i_{j_2}}e^{(i_{j_1+1}+t)\beta_{j_1+1} z}\cdots  e^{(i_{j_2}+t)\beta _{j_2}z}\Big)\\
		&\cdots \Big(\sum _{i_{j_{\ell-1}}<i_{j_{\ell-1}+1}<\cdots<i_{j_\ell}}
		e^{(i_{j_{\ell-1}+1}+t)\beta_{j_{\ell-1}+1} z}\cdots  e^{(i_{j_\ell}+t)\beta _{j_\ell}z}\Big)
		\Big(\sum _{i_{j_\ell} <i_{j_\ell+1}<\cdots <i_{k}}
		e^{(i_{j_\ell+1}+t)\beta_{j_\ell+1} z}\cdots e^{(i_{k}+t)\beta _{k}z}\Big).
	\end{split}
\end{equation}
So if $\alpha $ is a left weak composition, then each factor is a finite sum, and thus is in $\QQ [t][[z]]$. If $\alpha $ is not left weak, then only the last sum
$$ \sum _{i_{j_\ell} <i_{j_\ell+1}<\cdots <i_{k}}
e^{(i_{j_\ell+1}+t)\beta_{j_\ell+1} z}\cdots e^{(i_{k}+t)\beta _{k}z}
$$
is infinite. But for $t\geq 0$ and $\mre (z)<0$, this sum is absolutely convergent to
\begin{equation} \mlabel{eq:phiend}
	e^{(i_{j_\ell}+t)(\beta _{j_\ell+1}+\cdots +\beta _k)z}\frac {e^{\beta _kz}}{1-e^{\beta _kz}}
	\frac {e^{(\beta _k+\beta _{k-1})z}}{1-e^{(\beta _k+\beta _{k-1})z}}\cdots \frac {e^{(\beta _k+\cdots +\beta _{j_\ell +1})z}}{1-e^{(\beta _k+\cdots +\beta _{j_\ell +1})z}}
\end{equation}
which is in $\QQ [t][z^{-1},z]]$, as in the case of $\alpha =0^k$. Since the other factors in the product in Eq.~\meqref{eq:reg1} are already in $\QQ [t][[z]]$, the product is in $\QQ [t][z^{-1},z]]$. Therefore $\phi\bwvec{\alpha}{\beta}$ is a power series with variables $X$ and coefficients $\QQ[t][z^{-1},z]]$.
\smallskip

\noindent

\mref{it:phia2} From the proof of Item~\mref{it:phia1}, the pole part of the Laurent series $c_I$ in $\QQ [t][z^{-1},z]]$ comes from the product in Eq.~\meqref{eq:phiend} (the case $\alpha =0^k$ corresponds to $ j=0$). Each fraction factor increases the pole order by one. Thus the pole order of $c_I$ is uniformly bounded by $k-j$ independent of $I$.

Furthermore, note that in Eqs.~(\mref{eq:reg1}) and (\mref{eq:phiend}), the variable $t$ only appears in the form $tz$ in the powers of  exponential functions in the product.
Thus as a Laurent series $c_I=\sum\limits_{n\in \ZZ} c_{I ,n} z^n\in \QQ [t][z^{-1},z]]$, the degree of $c_{I,n}=\sum\limits_{s} c_{I,n,s} t^s$ in $t$ of any power $z^n$ is uniformly bounded by $n+k-j$, independent of $I$.
\smallskip

\noindent
\mref{it:phia3}
An element in $\QQ [t][z^{-1},z]][[X]]$ is uniquely of the form $\sum\limits_{\gamma\in \C, I}c_{\gamma,I}x_I^\gamma$ with $I$ ordered $|\gamma|$-subsets of $\PP$ and  $c_{\gamma,I}$ in $\QQ[t][z^{-1},z]]$. 
Let $\QQ [t][^bz^{-1},z]][[X]]$ denote the subset of $\QQ [t][z^{-1},z]][[X]]$ consisting of the power series $\sum\limits_{\gamma\in \C,I} c_{\gamma, I}  x_I^{\gamma}$ for which the pole orders in $z$ of the coefficients $c_{\gamma,I}:=\sum\limits_{n\in \ZZ}c_{\gamma,I,n} z^n\in \QQ[t][z^{-1},z]]$ are uniformly bounded independent of $\gamma$ and $I$.
Then $\QQ [t][^bz^{-1},z]][[X]]$ is a subalgebra of $\QQ [t][z^{-1},z]][[X]]$.
Furthermore, the summation exchange 
$$ W: \QQ [t][^bz^{-1},z]][[X]] \to \QQ [t][[X]][z^{-1},z]],\quad  \sum_{\gamma\in\C,I} \sum_{n\in \ZZ} c_{\gamma\in \C,I ,n}z^n x_{I}^{\gamma}\mapsto \sum_{n\in \ZZ} \sum_{\gamma\in\C,I} c_{\gamma,I ,n}x_I^{\gamma} z^n$$
is well defined and is an algebra isomorphism.

Similarly, let $\QQ [^bt][[X]]$ denote the subset of $\QQ [t][[X]]$ consisting of all formal power series $\sum\limits_{\gamma\in \C,I} c_{\gamma,I} x_I^{\gamma}$ for which the degrees of the coefficients $c_{\gamma,I}: =\sum\limits_{s\in \NN} c_{\gamma,I,s} t^s\in\QQ[t]$ are uniformly bounded independent of $\gamma$ and $I$.
Then $\QQ [^bt][[X]]$ is a subalgebra of $\QQ [t][[X]]$.
Furthermore, the summation exchange 
$$ U: \QQ [^bt][[X]] \to \QQ [[X]][t], \quad \sum_{\gamma\in\C,I} \sum_{s\in \NN} c_{\gamma,I,s}t^s x_I^{\gamma} \mapsto  \sum_{s\in \NN} \sum_{\gamma\in\C,I} c_{\gamma,I,s}x_I^{\gamma} t^s,$$
is well defined and is an algebra isomorphism.

By Item~\mref{it:phia2}, the element
$\phi\bwvec{\alpha}{\beta}=\sum\limits_{I}\sum\limits_{n\in \ZZ}\sum\limits_{s\in \NN}c_{I,n,s} t^sz^nx_I^{\bar\alpha}$ is in $\QQ [t][^bz^{-1},z]][[X]]$ and hence corresponds to an element $\sum\limits_{n\in \ZZ}\sum\limits_{I}\sum\limits_{s\in \NN} c_{I,n,s} t^sx_I^{\bar\alpha} z^n$ in $\QQ[t][[X]][z^{-1},z]]$ by the isomorphism $W$.

Furthermore, for a given $n\in \ZZ$, the element $\sum\limits_{I}\sum\limits_{s\in \NN} c_{I,n,s} t^s x_I^{\bar\alpha}$ is in $\QQ[^bt][[X]]$ and hence corresponds to an element $\sum\limits_{s\in \NN}\sum\limits_{I} c_{I,n,s} x_I^{\bar\alpha} t^s$ in $\QQ[[X]][t]$.

In summary, the element $\phi\bwvec{\alpha}{\beta}$ defined in Eq.~\meqref{eq:mapphia} is an element of $\QQ[[X]][t][z^{-1},z]]$ after summation exchanges.

\smallskip

\noindent
\mref{it:phia4}
By Proposition~\mref{pp:abb}, the assignment $\wvec{\alpha}{\beta}\mapsto M_{\wvec{\alpha}{\beta}}(X,Y)$ defines an algebra homomorphism from $\bwcc$ to the algebra of  bimonomial quasisymmetric functions.

With the absolute convergence for $t\geq 0$ and $\mre (z)<0$ of the series $\phi\bwvec{\alpha}{\beta}$ in Eq.~\meqref{eq:mapphia} established in Item~\mref{it:phia1}, we see that the series is the specialization of $M_{\wvec{\alpha}{\beta}}(X,Y)$ by taking $y_{i_j}$ to be $e^{(i_j+t)z}$. The specialization map is an algebra homomorphism. Thus post-composing with the previous algebra homomorphism, we conclude that the assignment in Eq.~\meqref{eq:mapphia} is an algebra homomorphism. Since the maps $W$ and $U$ in the proof of Item~\mref{it:phia3} are algebra isomorphisms, we conclude that the map $\phi:\bwcc\to \QQ[[X]][t][z^{-1},z]]$ in Eq.~\meqref{eq:phi2} is an algebra homomorphism.
\end{proof}

We next carry out the second step in the proof of Theorem~\mref{thm:phi(lastpartlt0)}.
\begin{lemma}\mlabel{lem:phisor=sr*0}
Let $\wvec{\alpha}{\beta}=\jwvec{\alpha_1,\!\!\!\!&\cdots,\!\!\!\!&\alpha_j,\!\!\!\! &0,\!\!\!\!&\cdots,\!\!\!\!&0} {\beta_1,\!\!\!\!&\cdots,\!\!\!\!&\beta_j,\!\!\!\!&\beta_{j+1},\!\!\!\!&\cdots,\!\!\!\!&\beta_k}\in \DWC$ with $0\leq j\leq k$. Then for $t\ge 0$ and $\mre (z)<0$, 
\begin{align*}
\phi \bwvec{\alpha}{\beta}(t) =\phi \bjwvec{\alpha_1,\!\!\!\!&\cdots,\!\!\!\!&\alpha_{j-1},\!\!\!\!&\alpha_j}
{\beta_{1},\!\!\!\!&\cdots,\!\!\!\!&\beta_{j-1},\!\!\!\!&\beta_j+\beta_{j+1}+\cdots+\beta_k}(t)\;
\phi \bjwvec{0,\!\!\!\!&\cdots,\!\!\!\!&0}{\beta_{j+1},\!\!\!\!&\cdots,\!\!\!\!&\beta_k}(0).
\end{align*}
\end{lemma}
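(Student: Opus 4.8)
The plan is to work directly from the defining series in Eq.~\meqref{eq:mapphia}. Since the hypothesis forces $\alpha_{j+1}=\cdots=\alpha_k=0$, only the variables $x_{i_1},\dots,x_{i_j}$ appear, and I would split the index set $i_1<\cdots<i_k$ at position $j$ (we may assume $j\ge 1$, the case $j=k$ being the trivial one with empty tail, treated below). Fixing the leading block $i_1<\cdots<i_j$, which carries all the $x$-powers together with the exponentials $e^{(i_m+t)\beta_m z}$ for $m\le j$, I would isolate the inner summation over the tail $i_j<i_{j+1}<\cdots<i_k$, which involves exponentials only. Rearranging the infinite double series into an outer sum of inner sums is legitimate because $\phi\bwvec{\alpha}{\beta}$ converges absolutely for $t\ge 0$ and $\mre(z)<0$, as established in Proposition~\mref{pp:phialbeqtzx}\mref{it:phia1}.

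The key step is the evaluation of the tail. Writing $i_{j+m}=i_j+p_m$ for $1\le m\le k-j$ converts the constraint $i_j<i_{j+1}<\cdots<i_k$ into $0<p_1<\cdots<p_{k-j}$ and pulls out the common factor $e^{(i_j+t)(\beta_{j+1}+\cdots+\beta_k)z}$, so that
\[
\sum_{i_j<i_{j+1}<\cdots<i_k} e^{(i_{j+1}+t)\beta_{j+1}z}\cdots e^{(i_k+t)\beta_kz}
= e^{(i_j+t)(\beta_{j+1}+\cdots+\beta_k)z}\,\phi\bjwvec{0,&\cdots,&0}{\beta_{j+1},&\cdots,&\beta_k}(0),
\]
where the residual sum over $p_1<\cdots<p_{k-j}$ is exactly the regularization of the all-zero top row evaluated at $t=0$, directly from Eq.~\meqref{eq:mapphia}. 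Crucially, this residual factor no longer depends on $i_1,\dots,i_j$, hence is constant with respect to the outer summation.

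Finally I would substitute this back, factor the constant $\phi\bjwvec{0,&\cdots,&0}{\beta_{j+1},&\cdots,&\beta_k}(0)$ out of the outer sum, and merge the two exponentials carrying $i_j$ via $e^{(i_j+t)\beta_jz}\,e^{(i_j+t)(\beta_{j+1}+\cdots+\beta_k)z}=e^{(i_j+t)(\beta_j+\cdots+\beta_k)z}$. The surviving outer sum over $i_1<\cdots<i_j$ is then, by definition, $\phi\bjwvec{\alpha_1,&\cdots,&\alpha_{j-1},&\alpha_j}{\beta_1,&\cdots,&\beta_{j-1},&\beta_j+\beta_{j+1}+\cdots+\beta_k}(t)$, which is precisely the asserted factorization; the boundary case $j=k$ is immediate, as the tail is empty and the second factor collapses to $\phi(\emptyset)=1$. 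The only genuinely nontrivial point is analytic rather than combinatorial: one must confirm that the tail series converges (it does, since $\mre(z)<0$ forces $|e^{\beta_m z}|<1$ for every positive $\beta_m$) and that absolute convergence justifies both the reindexing $i_{j+m}\mapsto p_m$ and the separation of the double series into a product. Everything else is a direct rearrangement of the defining series.
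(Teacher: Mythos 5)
Your proposal is correct and follows essentially the same route as the paper's proof: split the defining series at position $j$, reindex the tail sum by $i_{j+m}=i_j+p_m$ to pull out the factor $e^{(i_j+t)(\beta_{j+1}+\cdots+\beta_k)z}$ and recognize the residual sum as $\phi\bjwvec{0,\!\!\!\!&\cdots,\!\!\!\!&0}{\beta_{j+1},\!\!\!\!&\cdots,\!\!\!\!&\beta_k}(0)$, then merge the exponentials carrying $i_j$. Your added remarks on absolute convergence (via Proposition~\mref{pp:phialbeqtzx}) and the boundary case $j=k$ are sound but only make explicit what the paper leaves implicit.
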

\begin{proof}
Since $\alpha=(\alpha_1,\cdots,\alpha_j,0,\cdots,0)$, by Eq. \meqref{eq:mapphia} we have
\begin{align*}
\phi\bwvec{\alpha}{\beta}(t)
=&\sum_{i_1<\cdots<i_j}x_{i_1}^{\alpha_1}\cdots x_{i_j}^{\alpha_j}e^{(i_1+t)\beta_1 z}\cdots e^{(i_j+t)\beta_j z}\sum_{i_j<i_{j+1}<\cdots<i_k}e^{(i_{j+1}+t)\beta_{j+1}z}\cdots e^{(i_k+t)\beta_k z}\\
=&\sum_{i_1<\cdots<i_j}x_{i_1}^{\alpha_1}\cdots x_{i_{j-1}}^{\alpha_{j-1}}x_{i_j}^{\alpha_j}e^{(i_1+t)\beta_1z}\cdots e^{(i_{j-1}+t)\beta_{j-1}z} e^{(i_j+t)(\beta_j+\beta_{j+1}+\cdots+\beta_{k})z}\sum_{0<i_{j+1}<\cdots<i_k}e^{i_{j+1}\beta_{j+1}z}\cdots e^{i_k\beta_k z}\\
=&\,\phi  \bjwvec{\alpha_1,\!\!\!\!&\cdots,\!\!\!\!&\alpha_{j-1},\!\!\!\!&\alpha_j}{\beta_1,\!\!\!\!&\cdots,\!\!\!\!&\beta_{j-1},\!\!\!\!&\beta_j+\beta_{j+1}+\cdots+\beta_k}(t)\;
\phi \bjwvec{0,\!\!\!\!&\cdots,\!\!\!\!&0}{\beta_{j+1},\!\!\!\!&\cdots,\!\!\!\!&\beta_k}(0),
\end{align*}
as required.
\end{proof}

\begin{coro}\mlabel{cor:phisor=sr0}
For any composition ${\beta}=(\beta_1,\beta_2,\cdots,\beta_k)$, when $t\ge 0$ and $\mre (z)<0$, the expression $\phi\bwvec{0^k}{\beta}$ is equal to
\begin{align*}
\sum_{n=0}^{\infty} \left(\sum_{(s_1,s_2,\cdots,s_k)\in WC(n)}\Big(\sum_{i=0}^{s_1}\frac{b_{s_1-i}}{i!(s_1-i)!}t^i\Big)\frac{b_{s_2}\cdots b_{s_k}}{s_2!\cdots s_k!}
{(\beta_1+\cdots+\beta_{k})}^{s_1-1}{(\beta_2+\cdots+\beta_{k})}^{s_2-1}\cdots \beta_k^{s_k-1}\right)z^{n-k}.
\end{align*}
\end{coro}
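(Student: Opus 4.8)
The plan is to expand the closed product form of $\phi\bwvec{0^k}{\beta}$ factor by factor using the Bernoulli (geometric series) expansion in Eq.~\meqref{eq:eiz1-eiz}, and then regroup the resulting monomials by their total degree in $z$. First I would recall from the proof of Item~\mref{it:phia1} of Proposition~\mref{pp:phialbeqtzx} that, since $\alpha=0^k$ and each partial sum $w_j:=\beta_j+\beta_{j+1}+\cdots+\beta_k$ is positive (because $\beta$ is a composition), the series is absolutely convergent for $t\ge 0$ and $\mre(z)<0$ and equals
\begin{align*}
\phi\bwvec{0^k}{\beta}=e^{t w_1 z}\prod_{j=1}^{k}\frac{e^{w_j z}}{1-e^{w_j z}},
\end{align*}
where the factor indices run over the partial sums $w_1=\beta_1+\cdots+\beta_k$ down to $w_k=\beta_k$. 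It is convenient to single out the factor $j=1$, which alone carries the $t$-dependence through $e^{t w_1 z}=e^{t(\beta_1+\cdots+\beta_k)z}$.

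For each $j\ge 2$, Eq.~\meqref{eq:eiz1-eiz} gives the Laurent expansion $\dfrac{e^{w_j z}}{1-e^{w_j z}}=\sum_{s_j\ge 0}\dfrac{b_{s_j}}{s_j!}\,w_j^{s_j-1}z^{s_j-1}$. For the first factor I would form the Cauchy product of $e^{t w_1 z}=\sum_{i\ge 0}\frac{w_1^i z^i}{i!}t^i$ with $\dfrac{e^{w_1 z}}{1-e^{w_1 z}}=\sum_{m\ge 0}\dfrac{b_m}{m!}\,w_1^{m-1}z^{m-1}$. Setting $s_1=i+m$ collects the coefficient of $w_1^{s_1-1}z^{s_1-1}$ into $\sum_{i=0}^{s_1}\frac{b_{s_1-i}}{i!(s_1-i)!}t^i$, which is exactly the parenthesized $t$-polynomial in the statement; the case $k=1$ recovers Example~\mref{exameq:phi0r} as a sanity check.

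Multiplying the $k$ expansions, a typical monomial is indexed by $(s_1,\ldots,s_k)\in\NN^k$. Its power of $z$ is $\sum_{j=1}^{k}(s_j-1)=n-k$ with $n:=s_1+\cdots+s_k$, and its coefficient is the product of the factor coefficients, namely $\big(\sum_{i=0}^{s_1}\frac{b_{s_1-i}}{i!(s_1-i)!}t^i\big)\frac{b_{s_2}\cdots b_{s_k}}{s_2!\cdots s_k!}\,w_1^{s_1-1}\cdots w_k^{s_k-1}$. Substituting $w_j=\beta_j+\cdots+\beta_k$ and regrouping the outer summation so that $(s_1,\ldots,s_k)$ ranges over $\WC(n)$ (weak compositions of $n$ into $k$ parts) for each fixed $n$ then yields precisely the claimed formula.

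The only delicate points are combinatorial bookkeeping rather than analysis: one must align each partial sum $w_j$ with the correct factor (the full sum $w_1=\beta_1+\cdots+\beta_k$ with the first factor, down to $w_k=\beta_k$ with the last) and assign the $t$-exponential solely to the $w_1$-factor. The rearrangement of the $k$-fold product of absolutely convergent Laurent series and the regrouping into a single series in $z$ are legitimate because, for each fixed $n$, only the finitely many tuples $(s_1,\ldots,s_k)\in\WC(n)$ contribute, so each coefficient of $z^{n-k}$ is a finite sum; the uniform pole bound from Item~\mref{it:phia2} of Proposition~\mref{pp:phialbeqtzx} is consistent with the lowest power $z^{-k}$ appearing here. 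Hence no genuine obstacle arises beyond careful indexing.
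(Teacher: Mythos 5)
Your proof is correct and is essentially the paper's own argument: the paper derives the factorization $\phi\bwvec{0^k}{\beta}=\phi\bwvec{0}{\beta_1+\cdots+\beta_k}(t)\,\phi\bwvec{0}{\beta_2+\cdots+\beta_k}(0)\cdots\phi\bwvec{0}{\beta_k}(0)$ by iterating Lemma~\mref{lem:phisor=sr*0} and then expands each factor via Example~\mref{exameq:phi0r}, which is precisely the same product of Laurent series you obtain from the closed product form in the proof of Proposition~\mref{pp:phialbeqtzx}\mref{it:phia1} together with Eq.~\meqref{eq:eiz1-eiz}. The remaining step---the Cauchy product for the $t$-dependent factor and the regrouping of monomials by total $z$-degree over weak compositions of length $k$---is identical in both arguments.
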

\begin{proof}
Applying Lemma~\mref{lem:phisor=sr*0} repeatedly, we obtain
\begin{align*}
\phi\bwvec{0^k}{\beta} =&\phi\bwvec{0}{\beta_1+\cdots+\beta_k}(t)\phi\bwvec{0}{\beta_2+\cdots+\beta_k}(0) \cdots\phi\bwvec{0}{\beta_k}(0).
\end{align*}
Taking $t=0$ in Example~\mref{exameq:phi0r}, we obtain
\begin{align*}
\phi\bwvec{0}{r}(0)=\sum_{n=0}^{\infty}\frac{b_n}{n!}(rz)^{n-1},
\end{align*}
and hence
\begin{align*}
\phi\bwvec{0^k}{\beta}
=\Big(\sum_{n=0}^{\infty}\Big(\sum_{i=0}^n\frac{b_{n-i}}{i!(n-i)!}t^i\Big)(\beta_1+\cdots+\beta_k)^{n-1}z^{n-1}\Big)
\Big(\prod_{j=2}^k\sum_{i=0}^{\infty}\frac{b_i}{i!}{(\beta_j+\cdots+\beta_{k})}^{i-1}z^{i-1}\Big),
\end{align*}
yielding the desired result.
\end{proof}

Now we can narrow down the range of the algebra homomorphism $\phi$ and complete the proof of Theorem~\mref{thm:phi(lastpartlt0)}.

\begin{prop}\mlabel{pp:gsym}
Let $\wvec {\alpha}{\beta}=\wvec{\alpha_1,\alpha_2,\cdots,\alpha_k}{\beta_1,\beta_2,\cdots,\beta_k}\in\DWC$.
Let $j$ with $0\leq j\leq k$ such that
$\alpha_j>0$ and $\alpha_{j+1}=\cdots=\alpha_k=0$. Then for $t\ge 0$ and $\mre (z)<0$, we have
$$
\phi\bwvec{\alpha}{\beta} =\sum_{n=0}^{\infty}c_{n,\wvec{\alpha}{\beta}}z^{n-k+j},
$$
with coefficients $c_{n,\wvec{\alpha}{\beta}}$ in the algebra $\GSym[t]$ of \gslwqsyms defined in Definition~\mref{de:stqsym}.
	If $\wvec {\alpha}{\beta}\in \DWC$ and $\alpha$ is a left weak composition, then
	$\phi\bwvec {\alpha}{\beta}$ is an element of $\GSym[t][[z]]$.
In fact,
\begin{equation}
\begin{array}{l}
c_{n,\wvec{\alpha}{\beta}}=\sum\limits_{i=0}^n\left(\sum\limits_{(s_1,\cdots,s_k)\in WC(n-i)}\frac{b_{s_{j+1}}\cdots b_{s_k}}{s_1!\cdots s_k!}\beta_1^{s_1}\cdots \beta_{j-1}^{s_{j-1}}(\beta_j+\cdots+\beta_k)^{s_j}(\beta_{j+1}+\cdots+\beta_{k})^{s_{j+1}-1}\right.\\
\qquad \qquad \qquad \left . (\beta_{j+2}+\cdots+\beta_{k})^{s_{j+2}-1}\cdots \beta_k^{s_k-1}\widehat{M}_{\wvec{\alpha_1,\alpha_2,\cdots,\alpha_j}{s_1,s_2,\cdots,s_j}}
\right)\frac{(\beta_1+\cdots+ \beta_k)^{i}}{i!}t^{i}.
\end{array}
\mlabel{eq:coef}
\end{equation}
\end{prop}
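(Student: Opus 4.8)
The plan is to reduce the statement to the two building blocks already computed—Lemma~\mref{lem:phisor=sr*0} and Corollary~\mref{cor:phisor=sr0}—and then to Taylor-expand the one genuinely convergent factor into \gslwqsyms. First I would apply Lemma~\mref{lem:phisor=sr*0} to obtain, for $t\ge 0$ and $\mre(z)<0$,
$$\phi\bwvec{\alpha}{\beta}(t)=\phi\bwvec{\alpha_1,\cdots,\alpha_{j-1},\alpha_j}{\beta_1,\cdots,\beta_{j-1},\beta_j+\cdots+\beta_k}(t)\;\phi\bwvec{0^{k-j}}{\beta_{j+1},\cdots,\beta_k}(0),$$
where the first factor is indexed by an \ulwb, since its top row ends in $\alpha_j>0$, and the second by an all-zero bicomposition. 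When $j=0$ the first factor is absent and the assertion is exactly Corollary~\mref{cor:phisor=sr0}, so I may assume $j\ge 1$.

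Next I would expand the first factor. Setting $\gamma_m=\beta_m$ for $m<j$ and $\gamma_j=\beta_j+\cdots+\beta_k$, I factor out the entire $t$-dependence as $e^{t(\gamma_1+\cdots+\gamma_j)z}=e^{t(\beta_1+\cdots+\beta_k)z}=\sum_{i\ge 0}\frac{(\beta_1+\cdots+\beta_k)^i}{i!}t^iz^i$, and expand each remaining exponential by $e^{i_m\gamma_m z}=\sum_{s_m\ge 0}\frac{\gamma_m^{s_m}}{s_m!}i_m^{s_m}z^{s_m}$. Exchanging the sum over $i_1<\cdots<i_j$ with these expansions is justified by the absolute convergence recorded in Item~\mref{it:phia1} of Proposition~\mref{pp:phialbeqtzx}; collecting terms and invoking Definition~\mref{de:stqsym}, the coefficient of $z^{s_1+\cdots+s_j}$ is $\frac{\gamma_1^{s_1}\cdots\gamma_j^{s_j}}{s_1!\cdots s_j!}\,\widehat{M}_{\wvec{\alpha_1,\cdots,\alpha_j}{s_1,\cdots,s_j}}$, which lies in $\GSym$ precisely because $\alpha_j>0$ makes $\wvec{\alpha_1,\cdots,\alpha_j}{s_1,\cdots,s_j}$ upper-left weak. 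For the second factor I would use Corollary~\mref{cor:phisor=sr0} at $t=0$, producing the Bernoulli-type coefficients $\frac{b_{s_{j+1}}\cdots b_{s_k}}{s_{j+1}!\cdots s_k!}(\beta_{j+1}+\cdots+\beta_k)^{s_{j+1}-1}\cdots\beta_k^{s_k-1}$ carried by $z^{(s_{j+1}+\cdots+s_k)-(k-j)}$, the source of the pole of order $k-j$.

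Finally I would multiply the three series—the $t$-series, the Stirling series, and the Bernoulli series—and read off the coefficient of $z^{n-k+j}$. Since the total $z$-degree is $i+(s_1+\cdots+s_k)-(k-j)$, matching it with $n-k+j$ forces $s_1+\cdots+s_k=n-i$, that is $(s_1,\cdots,s_k)\in WC(n-i)$, and the product of the three coefficient families collapses to exactly the double sum of Eq.~\meqref{eq:coef}; as each summand is a $\GSym[t]$-multiple of some $\widehat{M}$, we get $c_{n,\wvec{\alpha}{\beta}}\in\GSym[t]$. The left weak case is the endpoint $j=k$: the all-zero factor then disappears, the lowest surviving power is $z^{n}$ with $n\ge 0$, so no poles arise and $\phi\bwvec{\alpha}{\beta}\in\GSym[t][[z]]$. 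I expect the main obstacle to be organizational rather than conceptual—keeping the three independent index families straight so that their combined $z$- and $t$-degrees collapse to the single constraint $(s_1,\cdots,s_k)\in WC(n-i)$, while correctly handling the degenerate endpoints $j=0$ and $j=k$.
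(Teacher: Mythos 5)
Your proposal is correct and follows essentially the same route as the paper: both rest on the splitting of Lemma~\ref{lem:phisor=sr*0}, the direct Taylor expansion of the upper-left weak factor into Stirling left weak quasisymmetric functions $\widehat{M}_{\wvec{\alpha_1,\cdots,\alpha_j}{s_1,\cdots,s_j}}$, and Corollary~\ref{cor:phisor=sr0} for the trailing all-zero factor, after which matching powers of $z$ yields Eq.~\eqref{eq:coef}. The only difference is organizational—the paper expands the left weak case first and then invokes the splitting, whereas you split first and then expand—and your explicit treatment of the endpoints $j=0$ and $j=k$ is a harmless elaboration of what the paper leaves implicit.
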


\begin{proof}
We just need to prove Eq.~\meqref{eq:coef} since it shows that $c_{n,\wvec{\alpha}{\beta}}$ is a polynomial in $t$ with coefficients in $\GSym$.
When $\alpha_k>0$, $t\ge 0$ and $\mre (z)<0$, we have
\begin{align*}
\phi \bwvec{\alpha}{\beta}
&=\sum_{i_1<\cdots<i_k}x_{i_1}^{\alpha_1}\cdots x_{i_k}^{\alpha_k}e^{(i_1+t)\beta_1z} \cdots e^{(i_k+t)\beta_kz}\\
&=e^{t(\beta_1+\cdots \beta_k)z}\sum_{i_1<\cdots<i_k}x_{i_1}^{\alpha_1}\cdots x_{i_k}^{\alpha_k}e^{(i_1\beta_1+\cdots i_k\beta_k)z}\\
&=\Big(\sum_{n=0}^{\infty}\frac{t^n(\beta_1+\cdots \beta_k)^nz^n}{n!}\Big)\Big(\sum_{i_1<\cdots<i_k}x_{i_1}^{\alpha_1}\cdots x_{i_k}^{\alpha_k}\sum_{n=0}^{\infty}\frac{(i_1\beta_1+ \cdots i_k\beta_k)^n}{n!}z^n\Big)\\
&=\Big(\sum_{n=0}^{\infty}\frac{t^n(\beta_1+\cdots \beta_k)^nz^n}{n!}\Big)\Big(\sum_{n=0}^{\infty}z^n\sum_{(s_1,\cdots,s_k)\in WC(n)}\frac{\beta_1^{s_1}\cdots \beta_k^{s_k}}{s_1!\cdots s_k!}\sum_{i_1<\cdots<i_k}i_1^{s_1}\cdots i_k^{s_k}x_{i_1}^{\alpha_1}\cdots x_{i_k}^{\alpha_k}\Big)\\
&=\sum_{n=0}^{\infty}\Big(\sum_{(s_0,s_1,\cdots,s_k)\in WC(n)}\frac{(\beta_1+\cdots \beta_k)^{s_0}\beta_1^{s_1}\cdots \beta_k^{s_k}}{s_0!s_1!\cdots s_k!}t^{s_0}\widehat{M}_{\wvec{\alpha_1,\cdots,\alpha_k}{s_1,\cdots,s_k}}\Big)z^n.
\end{align*}
Then, by Lemma \mref{lem:phisor=sr*0} and Corollary \mref{cor:phisor=sr0},
\begin{align*}
c_{n,\wvec{\alpha}{\beta}}=\sum_{(s_0,s_1,\cdots,s_k)\in WC(n)}&\frac{t^{s_0}b_{s_{j+1}}\cdots b_{s_k}}{s_0!s_1!\cdots s_k!}(\beta_1+\cdots+ \beta_k)^{s_0}\beta_1^{s_1}\cdots \beta_{j-1}^{s_{j-1}}(\beta_j+\cdots+\beta_k)^{s_j}\\
&(\beta_{j+1}+\cdots+\beta_{k})^{s_{j+1}-1}(\beta_{j+2}+\cdots+\beta_{k})^{s_{j+2}-1}\cdots \beta_k^{s_k-1}\widehat{M}_{\wvec{\alpha_1,\alpha_2,\cdots,\alpha_j}{s_1,s_2,\cdots,s_j}}.
\end{align*}
This gives the desired form of $c_{n,\wvec{\alpha}{\beta}}$.
\end{proof}

\begin {exam}\mlabel{examphi0r0ss0} For any $s, r,r_1,r_2\in\PP$, when $t\ge 0$ and $\mre (z)<0$, we have
\begin{align*}
\phi\bwvec {s}{r} =\sum_{n=0}^{\infty}\Big(\sum_{i=0}^n\frac{ t^{n-i}}{i!(n-i)!}\widehat{M}_{\wvec{s}{i}}\Big)(rz)^n,
\end{align*}
\begin{align*}
\phi \bwvec {s,0}{r_1,r_2}=\sum_{n=0}^{\infty}\sum_{(s_0,s_1,s_2)\in WC(n)}\frac{t^{s_0}b_{s_2}}{s_0!s_1!s_2!}(r_1+r_2)^{s_0+s_1}r_2^{s_2-1}\widehat{M}_{\wvec{s}{s_1}}z^{n-1}
\end{align*}
and
\begin{align*}
\phi  \bwvec {0,s}{r_1,r_2}
=&\sum_{n=0}^{\infty}\Big(\sum_{(s_0,s_1,s_2)\in WC(n)}\frac{(r_1+r_2)^{s_0}r_1^{s_1}r_2^{s_2}}{s_0!s_1!s_2!}t^{s_0}\widehat{M}_{\wvec{0,s}{s_1,s_2}}\Big)z^n.
\end{align*}
\end{exam}

\section{Renormalization of quasisymmetric functions}
\mlabel{sec:renorm}
We now apply the Algebraic Birkhoff Factorization to derive the renormalized quasisymmetric functions for weak compositions.

\subsection {Directional quasisymmetric functions}

Applying the Algebraic Birkhoff Factorization in Theorem~\mref{thm:Abdt} to the algebra homomorphism
$$\phi: \bwcc\to \GSym[t][{z}^{-1},{z} ]]\subseteq \bfk[t][[X]][{z} ^{-1}, {z} ]] $$
from Theorem~\mref{thm:phi(lastpartlt0)}, we obtain unique algebra homomorphisms
$$\phi_{-}: \bwcc \to  \GSym[t][{z}^{-1}]\quad \text{ and } \quad \phi_{+}: \bwcc \to  \GSym[t][[{z} ]],$$
defined by Eqs \meqref{eq:phi-xabd} and \meqref{eq:phi+xabd} respectively, such that
$\phi=\phi_{+}\star\phi_{-}^{\star\, (-1)}.$
Here the Rota-Baxter operator $P$ on $\bfk[t][[X]][{z} ^{-1}, {z} ]]$ is given by Eq.~\meqref{eq:defRBO1}.
More precisely, for any $\wvec{\alpha}{\beta}=\wvec{\alpha_1,\alpha_2,\cdots,\alpha_k}{\beta_1,\beta_2,\cdots,\beta_k}\in\DWC$, we have
\begin{align*}
\phi_{-}\bwvec{\alpha}{\beta} =-P \Big(\phi\bwvec{\alpha}{\beta}+
\sum_{i=1}^{k-1}\phi\bwvec{\alpha_1,\cdots,\alpha_i}{\beta_{1},\cdots,\beta_i} \phi_{-}\bwvec{\alpha_{i+1},\cdots,\alpha_k}{\beta_{i+1},\cdots,\beta_k} \Big )
\end{align*}
and
\begin{align*}
\phi_{+}\bwvec{\alpha}{\beta} =(\id-P) \Big(\phi\bwvec{\alpha}{\beta}+
\sum_{i=1}^{k-1}\phi\bwvec{\alpha_1,\cdots,\alpha_i}{\beta_{1},\cdots,\beta_i} \phi_{-}\bwvec{\alpha_{i+1},\cdots,\alpha_k}{\beta_{i+1},\cdots,\beta_k}  \Big).
\end{align*}

\begin {defn}\mlabel{defn:zetaqsf}
For any $\wvec{\alpha}{\beta}=\wvec{{\alpha_1,\alpha_{2},\cdots, \alpha_k}}{{\beta_1,\beta_{2},\cdots,\beta_k}}\in \DWC$, the evaluation
$$Z\bwvec{\alpha}{\beta}:=\phi_{+}\bwvec{\alpha}{\beta}\Big|_{{z} =0}\in \GSym[t]$$
is called the {\bf directional quasisymmetric function} of $\alpha$ in direction $\beta$.
\end{defn}

A property of directional quasisymmetric functions is apparent:
\begin{coro}\mlabel{cor:Z(s,s)starrul}
The directional  quasisymmetric functions satisfy the quasi-shuffle relation, that is, for any $\wvec{\alpha}{\beta}$ and $\wvec{\alpha'}{\beta'}$ in $\DWC$, we have
\vspace{-.3cm}
\begin{align*}
Z\Big(\wvec{\alpha}{\beta}\Big)Z\Big(\wvec{\alpha'}{\beta'}\Big)=Z\Big(\wvec{\alpha}{\beta}*\wvec{\alpha'}{\beta'}\Big).
\end{align*}
\end{coro}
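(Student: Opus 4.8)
The plan is to deduce this corollary directly from the Algebraic Birkhoff Factorization, exploiting the fact that $\phi_+$ is an algebra homomorphism together with the multiplicativity of the evaluation map at $z=0$. The key observation is that $Z$ is built from $\phi_+$ by evaluating at $z=0$, and both constituents respect products in the appropriate sense. Since Theorem~\mref{thm:Abdt} guarantees that $\phi_+:\bwcc\to\GSym[t][[z]]$ is an algebra homomorphism, and since the quasi-shuffle product $*$ is the multiplication in the Hopf algebra $\bwcc$, we have
\[
\phi_+\Big(\wvec{\alpha}{\beta}*\wvec{\alpha'}{\beta'}\Big)=\phi_+\bwvec{\alpha}{\beta}\;\phi_+\bwvec{\alpha'}{\beta'}.
\]
The remaining work is to check that evaluation at $z=0$ is compatible with this.

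First I would record that because $\phi_+$ lands in $\GSym[t][[z]]$ (a genuine power series ring in $z$, with no polar part, by Theorem~\mref{thm:phi(lastpartlt0)} and the construction of $\phi_+=\tilde P(\cdots)$), the evaluation homomorphism
\[
\ev_0:\GSym[t][[z]]\to\GSym[t],\qquad f(z)\mapsto f(0),
\]
is a well-defined algebra homomorphism. The composite $Z=\ev_0\circ\phi_+$ is therefore an algebra homomorphism from $\bwcc$ to $\GSym[t]$. Applying this composite to the product $\wvec{\alpha}{\beta}*\wvec{\alpha'}{\beta'}$ and using multiplicativity yields
\[
Z\Big(\wvec{\alpha}{\beta}*\wvec{\alpha'}{\beta'}\Big)
=\ev_0\Big(\phi_+\bwvec{\alpha}{\beta}\,\phi_+\bwvec{\alpha'}{\beta'}\Big)
=\ev_0\big(\phi_+\bwvec{\alpha}{\beta}\big)\,\ev_0\big(\phi_+\bwvec{\alpha'}{\beta'}\big)
=Z\bwvec{\alpha}{\beta}\,Z\bwvec{\alpha'}{\beta'},
\]
which is exactly the claimed quasi-shuffle relation.

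The only point requiring genuine care—and the step I expect to be the main obstacle—is justifying that $\phi_+$ really takes values in $\GSym[t][[z]]$ rather than merely in $\GSym[t][z^{-1},z]]$, so that evaluation at $z=0$ makes sense. This is precisely what the Algebraic Birkhoff Factorization asserts (via $\phi_+=\tilde P(\cdots)$ with $\tilde P=\id-P$ projecting onto the non-polar part), so the obstacle is already resolved by Theorem~\mref{thm:Abdt}; I would simply cite it. A secondary subtlety is that the convolution and products here take place in $\bfk[t][[X]][z^{-1},z]]$, and one should confirm that multiplying two elements of the power-series-in-$z$ subring stays within that subring and commutes with $\ev_0$—but this is routine since $\GSym[t][[z]]$ is closed under multiplication and $z=0$ evaluation is a ring homomorphism on it.
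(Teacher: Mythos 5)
Your proposal is correct and follows essentially the same route as the paper: both deduce the result from the fact that $\phi_+$ is an algebra homomorphism into $\GSym[t][[z]]$ (guaranteed by the Algebraic Birkhoff Factorization, Theorem~\ref{thm:Abdt}, together with Theorem~\ref{thm:phi(lastpartlt0)}) and that evaluation at $z=0$ is an algebra homomorphism, so their composite $Z$ is multiplicative. Your extra remarks about why evaluation at $z=0$ makes sense are exactly the implicit content of the paper's shorter argument, so there is nothing to add.
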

\begin{proof}
By Theorem \mref{thm:Abdt}, $\phi_{+}: \bwcc \to  \GSym[t][[{z} ]]$ is an algebra homomorphism. The evaluation at $z=0$ is also an algebra homomorphism. Thus their composition $Z$ is still one, giving the equation.
\end{proof}

\begin{defn}
Let $k$ be a positive integer and $I=(i_1,i_2,\cdots,i_p)$ a composition of $k$. For $1\leq j\leq p$, define the partial sum $I_j=i_1+\cdots+i_j$ with the convention that $I_0=0$.
The {\bf partition vectors} of a weak composition $\alpha=(\alpha_1,\alpha_2,\cdots,\alpha_k)$ from the composition $I=(i_1,i_2,\cdots,i_p)$ are the vectors
$\alpha^{ (j)}=(\alpha_{I_{j-1}+1},\cdots,\alpha_{I_j})$, $1\leq j\leq p$.
\end{defn}

With this notion, we obtain the following analog of \cite[Theorem 3.8]{GZ08} by the same argument.

\begin{lemma}\mlabel{thm:phi+-}
Let $P:\GSym[t][{z}^{-1},{z}]]\rightarrow \GSym[t][{z}^{-1},{z}]]$ be the Rota-Baxter operator in Eq.~\meqref{eq:defRBO1}. Denote $\check{P}=-P$ and $\tilde{P}=\id-P$.
For any $\wvec{\alpha}{\beta}=\wvec{\alpha_1,\alpha_2,\cdots,\alpha_k}{\beta_1,\beta_2,\cdots,\beta_k}\in\DWC$, we have
\begin{align*}
\phi_{-}\bwvec{\alpha}{\beta}
=\sum_{(i_1,\cdots,i_p)\in C(k)}
\check{P}\Big(\phi\bwvec{\alpha^{(1)}}{\beta^{(1)}} \cdots\check{P}\Big(\phi\bwvec{\alpha^{(p-1)}}{\beta^{(p-1)}}
\check{P}\Big(\phi\bwvec{\alpha^{(p)}}{\beta^{(p)}}\Big)\Big)\cdots\Big)
\end{align*}
and
\begin{align*}
\phi_{+}\bwvec{\alpha}{\beta}
=&\sum_{(i_1,\cdots,i_p)\in{C(k)}}
\tilde{P}\Big(\phi\bwvec{\alpha^{\,\,(1)}}{\beta^{\,\,(1)}}\cdots
\check{P}\Big(\phi\bwvec{\alpha^{(p-1)}}{\beta^{(p-1)}}\check{P}\Big(\phi\bwvec{\alpha^{(p)}}{\beta^{(p)}}\Big)\Big)\cdots\Big).
\end{align*}
\end{lemma}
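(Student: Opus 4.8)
The plan is to establish the formula for $\phi_{-}$ by induction on the length $k$ of $\wvec{\alpha}{\beta}$, and then read off the formula for $\phi_{+}$ with no further induction. Specializing the general Birkhoff recursion Eq.~\meqref{eq:phi-xabd} to the deconcatenation coproduct of the quasi-shuffle Hopf algebra $\bwcc$ gives
\[
\phi_{-}\bwvec{\alpha}{\beta}=\check{P}\Big(\phi\bwvec{\alpha}{\beta}+\sum_{i=1}^{k-1}\phi\bwvec{\alpha_1,\cdots,\alpha_i}{\beta_{1},\cdots,\beta_i}\,\phi_{-}\bwvec{\alpha_{i+1},\cdots,\alpha_k}{\beta_{i+1},\cdots,\beta_k}\Big),
\]
which is exactly the recursion already displayed before Definition~\mref{defn:zetaqsf}. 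The base case $k=1$ is immediate: the reduced coproduct of a length-one weak bicomposition vanishes, so the recursion collapses to $\phi_{-}\bwvec{\alpha_1}{\beta_1}=\check{P}\big(\phi\bwvec{\alpha_1}{\beta_1}\big)$, matching the right-hand side since $(1)$ is the only composition of $1$ and its single partition vector returns $\wvec{\alpha_1}{\beta_1}$.

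For the inductive step I would apply the induction hypothesis to each inner factor $\phi_{-}\bwvec{\alpha_{i+1},\cdots,\alpha_k}{\beta_{i+1},\cdots,\beta_k}$, which has length $k-i<k$ and hence expands as a sum over compositions of $k-i$ of nested $\check{P}$-strings. The heart of the argument is then the standard recursive bijection on compositions: every $(i_1,\dots,i_p)\in C(k)$ is either the one-part composition $(k)$ — which corresponds to the bare summand $\phi\bwvec{\alpha}{\beta}$ sitting inside the outer $\check{P}$ — or is uniquely of the form $(i,j_1,\dots,j_q)$ with $1\le i\le k-1$ and $(j_1,\dots,j_q)\in C(k-i)$. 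Under this splitting the leftmost factor $\phi\bwvec{\alpha_1,\cdots,\alpha_i}{\beta_1,\cdots,\beta_i}$ is precisely $\phi\bwvec{\alpha^{(1)}}{\beta^{(1)}}$, while the nested $\check{P}$-string produced by the hypothesis for the tail supplies the remaining factors $\phi\bwvec{\alpha^{(2)}}{\beta^{(2)}},\dots,\phi\bwvec{\alpha^{(p)}}{\beta^{(p)}}$.

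The $\phi_{+}$ formula then follows directly. Eq.~\meqref{eq:phi+xabd} has exactly the same argument inside the projection as Eq.~\meqref{eq:phi-xabd}, differing only in that the outermost operator is $\tilde{P}=\id-P$ rather than $\check{P}=-P$. Substituting the just-proved expansion of $\phi_{-}$ for the tail and invoking the same bijection on $C(k)$ yields the claimed expression, with $\tilde{P}$ outermost and $\check{P}$ throughout the remaining nested string.

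I expect the only real obstacle to be bookkeeping rather than substance: one must verify that concatenating the first part $i$ with a composition of the tail $(\alpha_{i+1},\dots,\alpha_k)$ reproduces exactly the partition vectors $\alpha^{(2)},\dots,\alpha^{(p)}$ (and likewise for $\beta$) appearing in the target formula, which amounts to a routine check that the index ranges $I_{j-1}+1,\dots,I_j$ shift correctly under the splitting. No analytic input is required beyond the algebra-homomorphism property of $\phi$ and the fact, from Theorem~\mref{thm:Abdt}, that $\phi_{-},\phi_{+}$ are uniquely determined by these recursions; this is why the argument of \cite[Theorem 3.8]{GZ08} transfers verbatim.
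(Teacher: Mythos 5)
Your proof is correct and is essentially the paper's own argument: the paper proves this lemma simply by invoking ``the same argument'' as \cite[Theorem 3.8]{GZ08}, and that argument is precisely the induction you wrote out — unfolding the Birkhoff recursions \meqref{eq:phi-xabd} and \meqref{eq:phi+xabd} for the deconcatenation coproduct via the bijection between $C(k)\setminus\{(k)\}$ and pairs $(i,\,(j_1,\dots,j_q))$ with $(j_1,\dots,j_q)\in C(k-i)$. The bookkeeping on partition vectors you flag does check out, so no gap remains.
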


The following result shows that for \ulwbs, the directional quasisymmetric functions agree with quasisymmetric functions, as expected.

\begin{coro}\mlabel{cor:phi+=phiskneq0}
Let $\wvec{\alpha}{\beta}\in \DWC$ be an \ulwb.
Then $Z\bwvec{\alpha}{\beta}=M_{{\alpha}}$, independent of the choice of the composition $\beta$.
\end{coro}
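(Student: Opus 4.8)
The plan is to show that, for an upper-left weak bicomposition, the negative part of the Algebraic Birkhoff Factorization is trivial, so that $\phi_+$ merely reproduces $\phi$, whose value at $z=0$ is $M_\alpha$. First I would invoke Theorem~\mref{thm:phi(lastpartlt0)}: because $\wvec{\alpha}{\beta}$ is upper-left weak, $\alpha$ is a left weak composition, so $\phi\bwvec{\alpha}{\beta}$ lies in $\GSym[t][[z]]$ and therefore has no polar part in $z$. Consequently $P\big(\phi\bwvec{\alpha}{\beta}\big)=0$ and $\tilde P\big(\phi\bwvec{\alpha}{\beta}\big)=\phi\bwvec{\alpha}{\beta}$.

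The key observation I would isolate next is that left-weakness is inherited by the \emph{tail} of any partition: for every composition $(i_1,\cdots,i_p)\in \C(k)$, the last partition vector $\alpha^{(p)}=(\alpha_{I_{p-1}+1},\cdots,\alpha_{I_p})$ ends in $\alpha_{I_p}=\alpha_k>0$, hence is itself a left weak composition. Thus, again by Theorem~\mref{thm:phi(lastpartlt0)}, $\phi\bwvec{\alpha^{(p)}}{\beta^{(p)}}$ has no polar part, so $\check P\big(\phi\bwvec{\alpha^{(p)}}{\beta^{(p)}}\big)=-P\big(\phi\bwvec{\alpha^{(p)}}{\beta^{(p)}}\big)=0$.

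I would then feed this into the closed formulas of Lemma~\mref{thm:phi+-}. In the sum for $\phi_-\bwvec{\alpha}{\beta}$, the innermost factor of every summand is exactly $\check P\big(\phi\bwvec{\alpha^{(p)}}{\beta^{(p)}}\big)=0$, so each summand vanishes and $\phi_-\bwvec{\alpha}{\beta}=0$ for $k\geq 1$. In the sum for $\phi_+\bwvec{\alpha}{\beta}$, every summand with $p\geq 2$ contains this same zero factor nested inside the outer products and the $\check P,\tilde P$ operators; since multiplication by the surviving factors and the operators $\check P,\tilde P$ all send $0$ to $0$, these summands propagate to $0$. Only the single $p=1$ term, coming from $(i_1)=(k)$, survives, equal to $\tilde P\big(\phi\bwvec{\alpha}{\beta}\big)=\phi\bwvec{\alpha}{\beta}$ by the first paragraph. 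Hence $\phi_+\bwvec{\alpha}{\beta}=\phi\bwvec{\alpha}{\beta}$. Finally, by Definition~\mref{defn:zetaqsf}, $Z\bwvec{\alpha}{\beta}=\phi_+\bwvec{\alpha}{\beta}\big|_{z=0}=\phi\bwvec{\alpha}{\beta}\big|_{z=0}$, and setting $z=0$ in Eq.~\meqref{eq:mapphia} turns every factor $e^{(i_j+t)\beta_j z}$ into $1$, leaving $\sum_{i_1<\cdots<i_k}x_{i_1}^{\alpha_1}\cdots x_{i_k}^{\alpha_k}=M_\alpha$ (well defined by Lemma~\mref{lem:wccong}); equivalently one reads off the coefficient $c_{0,\wvec{\alpha}{\beta}}=\widehat{M}_{\wvec{\alpha}{0^k}}=M_\alpha$ from Proposition~\mref{pp:gsym}. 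Since $\beta$ has disappeared, $Z\bwvec{\alpha}{\beta}=M_\alpha$ independently of $\beta$.

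I expect the only delicate point to be the bookkeeping that kills every $p\geq 2$ summand of $\phi_+$: one must check that the zero produced by the innermost $\check P$ genuinely propagates outward through the nested products and the $\check P/\tilde P$ operators instead of being cancelled. This is where I would be most careful, but it follows at once from the linearity of $\check P,\tilde P$ together with $x\cdot 0=0$.
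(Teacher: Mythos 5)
Your proof is correct and matches the paper's own argument essentially step for step: both hinge on the observation that the last partition vector $\alpha^{(p)}$ inherits $\alpha_k>0$, so Theorem~\ref{thm:phi(lastpartlt0)} forces $\check{P}\big(\phi\bwvec{\alpha^{(p)}}{\beta^{(p)}}\big)=0$, killing every term of Lemma~\ref{thm:phi+-} except $(i_1,\cdots,i_p)=(k)$, whence $\phi_+\bwvec{\alpha}{\beta}=\phi\bwvec{\alpha}{\beta}$ and evaluation at $z=0$ via Proposition~\ref{pp:gsym} gives $\widehat{M}_{\wvec{\alpha}{0^k}}=M_\alpha$. Your additional remarks (that $\phi_-$ vanishes, and the zero-propagation bookkeeping) are correct but not needed beyond what the paper already records.
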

\begin{proof}
Let $\wvec{\alpha}{\beta}=\wvec{{\alpha_1,\alpha_{2},\cdots, \alpha_k}}{{\beta_1,\beta_{2},\cdots,\beta_k}}$.
If a composition $(i_1,i_2,\cdots,i_p)$ of ${k}$ is not $(k)$, then $p>1$. The last component of the partition vector $\alpha^{(p)}$ is $\alpha_k$ which is a positive integer. Then, by Theorem \mref{thm:phi(lastpartlt0)},
$\phi\bwvec{\alpha^{(p)}}{\beta^{(p)}}$ is in  $\GSym[t][[{z}]]$ so that
$\check{P}\Big(\phi\bwvec{\alpha^{(p)}}{\beta^{(p)}}\Big)=0$.
Hence in the sum of $\phi_+\bwvec{\alpha}{\beta}$ in Lemma \mref{thm:phi+-}, only the term with $(i_1,\cdots,i_p)=(k)$ is left. So
$$\phi_{+}\bwvec{\alpha}{\beta} =\tilde{P}\Big(\phi\bwvec{\alpha}{\beta}\Big)=\phi\bwvec{\alpha}{\beta}.$$
It then follows from Proposition~\mref{pp:gsym} that $Z\bwvec{\alpha}{\beta}=\phi\bwvec{\alpha}{\beta}\Big|_{{z}=0}
=\widehat{M}_{\wvec{\alpha}{0^k}}=M_{{\alpha}}$.
\end{proof}

We also give examples when $\alpha$ are weak compositions.

\begin {exam}\mlabel{example:z(sr)}
Continuing with the examples in Example \mref{examphi0r0ss0}, we have
\begin{align*}Z \bwvec {0}{r}=-t-\frac 12,\quad
Z\bwvec {s}{r}=\widehat{M}_{\wvec{s}{0}}=M_{{s}}\quad{\rm and }\quad
Z\bwvec {0,s}{r_1,r_2}=\widehat{M}_{\wvec{0,s}{0,0}}=M_{(0,s)}.
\end{align*}
From  Lemma \mref{thm:phi+-} it follows that
\begin{align*}
\phi_{+}\Big(\wvec{{s,0}}{{r_1,r_2}}\Big)=\tilde{P}\Big(\phi\Big(\wvec{{s,0}}{{r_1,r_2}}\Big) \Big)
+\tilde{P}\Big(\phi\Big(\wvec{{s}}{{r_1}}\Big)\check{P}\Big(\phi\bwvec{{0}}{{r_2}} \Big)\Big).
\end{align*}
Hence by Example \mref{examphi0r0ss0}, we have
\begin{align*}
Z\bwvec {s,0}{\beta_1,\beta_2}=\phi_{+}\bwvec{{0,s}}{{\beta_2,\beta_1}}\Big|_{z=0}
= -(t+\frac{1}{2})\widehat{M}_{\wvec{s}{0}}-\widehat{M}_{\wvec{s}{1}} = -(t+\frac{3}{2})M_s-M_{(0,s)}. 
\end{align*}
\end{exam}

\subsection {Independence of the direction vector}
We now extract a renormalized quasisymmetric function for a weak composition that is independent of the directional vector.

For any $\wvec{\alpha}{\beta}=\wvec{{\alpha_1,\alpha_{2},\cdots, \alpha_k}}{{\beta_1,\beta_{2},\cdots,\beta_k}}\in \DWC$,
there exists a unique nonnegative integer $j\leq k$ such that
$\alpha_j>0$ and $\alpha_{j+1}=\cdots=\alpha_{k}=0$.
Let $\mathfrak{S}_{\alpha}$ be the permutation group $\mathbb{S}_{k-j}$ of $[k-j]$.
Define
$$ Z\bwvec{\alpha}{\beta}^{\mathfrak{S}_{\alpha}}:=\sum_{\sigma\in \mathfrak{S}_{\alpha}}
Z\bjwvec{\alpha_1,\!\!\!\!&\cdots,\!\!\!\!&\alpha_{j},\!\!\!\!&\alpha_{j+1},\!\!\!\!&\cdots,\!\!\!\!&\alpha_{k}}
{\beta_1,\!\!\!\!&\cdots,\!\!\!\!&\beta_{j},\!\!\!\!&\beta_{j+\sigma (1)},\!\!\!\!&\cdots,\!\!\!\!&\beta_{j+\sigma (k-j)})}.$$

\begin{lemma}
For any $\wvec{\alpha}{\beta}\in \DWC$,
the element $Z\bwvec{\alpha}{\beta}^{\mathfrak{S}_{\alpha}}$ is independent of the choice of the composition $\beta$.
\mlabel{thm:gzeta}
\end{lemma}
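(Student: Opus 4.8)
The plan is to induct on $m:=k-j$, the number of trailing zeros of $\alpha$, carrying the inductive hypothesis that for every $\wvec{\alpha}{\beta}\in\DWC$ with exactly $m$ trailing zeros the element $Z\bwvec{\alpha}{\beta}^{\mathfrak{S}_{\alpha}}$ is independent of $\beta$ (equivalently, depends only on $(\alpha_1,\dots,\alpha_j)$ and $t$). The base case $m=0$ is immediate from Corollary~\mref{cor:phi+=phiskneq0}, which gives $Z\bwvec{\alpha}{\beta}=M_\alpha$ while $\mathfrak{S}_{\alpha}$ is trivial. The engine of the induction is that $Z$ is a homomorphism for the quasi-shuffle product (Corollary~\mref{cor:Z(s,s)starrul}), together with the two explicit evaluations $Z\bwvec{\alpha_1,\dots,\alpha_j}{\beta_1,\dots,\beta_j}=M_{(\alpha_1,\dots,\alpha_j)}$ (Corollary~\mref{cor:phi+=phiskneq0}, since $(\alpha_1,\dots,\alpha_j)$ is upper-left weak) and $Z\bwvec{0}{r}=-t-\tfrac12$ for every $r\in\PP$ (Example~\mref{example:z(sr)}).

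For the inductive step I would apply $Z$ to the quasi-shuffle product
\[
\wvec{\alpha_1,\dots,\alpha_j}{\beta_1,\dots,\beta_j}\ast\wvec{0}{\beta_{j+1}}\ast\cdots\ast\wvec{0}{\beta_{k}}.
\]
Since $Z$ is multiplicative, the left-hand side evaluates, by the two formulas above, to $M_{(\alpha_1,\dots,\alpha_j)}\,(-t-\tfrac12)^{m}$, which is manifestly independent of $\beta$. On the right-hand side I would expand the product by the recursive quasi-shuffle rule into a sum of basis bicompositions, each obtained by interleaving the $m$ zero-letters $\wvec{0}{\beta_{j+1}},\dots,\wvec{0}{\beta_{k}}$ into the head word $\wvec{\alpha_1,\dots,\alpha_j}{\beta_1,\dots,\beta_j}$, where collided letters are merged by componentwise addition of their exponents.

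The crux is to organize this expansion by symmetrization classes. A resulting bicomposition has exactly $m$ trailing zeros precisely when all $m$ zero-letters are placed, unmerged, after the positive last head entry $\alpha_j$; summing over their $m!$ relative orders produces exactly $\sum_{\sigma\in\mathfrak{S}_{\alpha}}\wvec{\alpha_1,\dots,\alpha_k}{\beta_1,\dots,\beta_j,\beta_{j+\sigma(1)},\dots,\beta_{j+\sigma(k-j)}}$, whose $Z$-image is the desired $Z\bwvec{\alpha}{\beta}^{\mathfrak{S}_{\alpha}}$. Every remaining term has at most $m-1$ trailing zeros, since at least one zero-letter was merged into a head letter, merged with another zero, or inserted before $\alpha_j$. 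Grouping these remaining terms by placement pattern—fixing which zeros are merged where and what the non-trailing part is, and letting only the trailing zero-letters vary in order—each group assembles, after applying $Z$, into a full symmetrization $Z\bwvec{\alpha'}{\beta'}^{\mathfrak{S}_{\alpha'}}$ of a bicomposition with fewer than $m$ trailing zeros. By the inductive hypothesis each such group is independent of its directions, hence a universal constant determined by the pattern and $t$; moreover the collection of patterns and their multiplicities is fixed once $\alpha$ is fixed and never sees $\beta$. Thus $Z\bwvec{\alpha}{\beta}^{\mathfrak{S}_{\alpha}}$ equals the $\beta$-free left-hand side minus a $\beta$-free sum of such constants, closing the induction.

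The main obstacle is exactly this last combinatorial bookkeeping: verifying that the quasi-shuffle expansion of a head word against $m$ zero-letters splits cleanly into symmetrization classes, that within each non-maximal class all orders of the trailing zero-letters occur with equal multiplicity (so the class really reassembles into $Z\bwvec{\alpha'}{\beta'}^{\mathfrak{S}_{\alpha'}}$), and that coincidences among merged direction exponents are counted with the correct multiplicities. Once this decomposition is in hand, independence of $\beta$ follows formally. The pure-tail case $(\alpha_1,\dots,\alpha_j)=\emptyset$, where the expansion is simply the sum over ordered set partitions of the zero-letters and one finds, for instance, $Z\bwvec{0,0}{r_1,r_2}^{\mathfrak{S}_{\alpha}}=(-t-\tfrac12)^2-(-t-\tfrac12)$, is a clean model for the general pattern and can be used to calibrate the multiplicities.
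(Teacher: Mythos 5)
Your proposal is correct, and it reaches the lemma by a genuinely different execution of the same underlying strategy. Both arguments rest on induction on the number $m=k-j$ of trailing zeros, multiplicativity of $Z$ (Corollary~\mref{cor:Z(s,s)starrul}), the base case from Corollary~\mref{cor:phi+=phiskneq0}, and the evaluation $Z\bwvec{0}{r}=-t-\tfrac{1}{2}$; the difference is in what gets multiplied. The paper peels off a \emph{single} trailing zero: it forms the product $Z\bwvec{\alpha'}{\beta'}^{\mathfrak{S}_{\alpha'}}\cdot Z\bwvec{0}{\beta_k}$ with $\alpha'=(\alpha_1,\dots,\alpha_{k-1})$, so the quasi-shuffle expansion only ever inserts one letter and splits into exactly four sums (insert before a positive entry, merge into a positive entry, insert among the trailing zeros, merge into a trailing zero). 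Every non-maximal term then still has exactly $m$ trailing zeros, so \emph{ordinary} induction suffices; the insert-among-trailing-zeros sum is identified with $Z\bwvec{\alpha}{\beta}^{\mathfrak{S}_{\alpha}}$ by the count $(k-j)!=(k-j)\cdot(k-j-1)!$, and the merge-into-a-trailing-zero sum is reassembled into symmetrizations by a short reindexing over $\sigma$ with $\sigma(l)=s$. You instead multiply the unsymmetrized head $\wvec{\alpha_1,\dots,\alpha_j}{\beta_1,\dots,\beta_j}$ by all $m$ zero letters at once. That buys an explicitly $\beta$-free left-hand side, $M_{(\alpha_1,\dots,\alpha_j)}\bigl(-t-\tfrac{1}{2}\bigr)^{m}$, with no inductive input needed there; the price is the ``placement pattern'' bookkeeping you flag as the main obstacle, plus \emph{strong} induction, since your lower-order terms can have anywhere from $0$ to $m-1$ trailing zeros. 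That bookkeeping does go through: within a fixed pattern the trailing blocks are distinguishable subsets of the zero letters, so their orderings each occur exactly once and reassemble into a full symmetrization covered by the inductive hypothesis, and the set of patterns depends only on $j$ and $m$, hence is $\beta$-free. The paper's one-letter-at-a-time device is precisely what makes this verification nearly automatic, at the cost of a less transparent left-hand side. Your calibration $Z\bwvec{0,0}{r_1,r_2}^{\mathfrak{S}_{\alpha}}=\bigl(-t-\tfrac{1}{2}\bigr)^2-\bigl(-t-\tfrac{1}{2}\bigr)$ is correct; it matches $2M_{(0,0)}=t^2+2t+\tfrac{3}{4}$ from Example~\mref{exam:m000}.
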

\begin{proof}
Let $j$  be the nonnegative integer such that
$\alpha_j>0$ and $\alpha_{j+1}=\cdots=\alpha_{k}=0$.
We prove by induction on $k-j\geq 0$. By Corollary \mref{cor:phi+=phiskneq0}, the assertion is
true for $k-j=0$.
Suppose that the statement has been proved for $k-j=m$ for a given $m\geq 0$, and consider $k-j=m+1$.
Then $\alpha_k=0$.
Denote $\alpha'=(\alpha_1,\cdots, \alpha_{k-1})$ and $\beta'=(\beta_1,\cdots, \beta_{k-1})$.
Then by Corollary~\mref{cor:Z(s,s)starrul}, we have
\begin{align*}
Z\bwvec{\alpha'}{\beta'}^{\mathfrak{S}_{\alpha'}}*Z\bwvec{{0}}{{\beta_{k}}}
=&\sum_{\sigma\in\mathfrak{S}_{\alpha'}}Z\bjwvec{\alpha_1,\!\!\!\!&\cdots,\!\!\!\!&\alpha_j,\!\!\!\!&\alpha_{j+1},\!\!\!\!&\cdots,\!\!\!\!&\alpha_{k-1}}
{\beta_1,\!\!\!\!&\cdots,\!\!\!\!&\beta_{j},\!\!\!\!&\beta_{j+\sigma(1)},\!\!\!\!&
\cdots,\!\!\!\!&\beta_{j+\sigma(k-j-1)}}*Z\bwvec{{0}}{{\beta_{k}}}\\
=&\sum _{\sigma\in\mathfrak{S}_{\alpha'}}\sum _{l=1}^jZ\bjwvec{\alpha_1,\!\!\!\!&\cdots,\!\!\!\!&\alpha_{l-1},\!\!\!\!&0,\!\!\!\!&\alpha_l,\!\!\!\!&\cdots,\!\!\!\!&\alpha_j,\!\!\!\!&\alpha_{j+1},
\!\!\!\!&\cdots,\!\!\!\!&\alpha_{k-1}}
{\beta_1,\!\!\!\!&\cdots,\!\!\!\!&\beta_{l-1},\!\!\!\!&\beta_{k},\!\!\!\!&\beta_{l},\!\!\!\!&\cdots,\!\!\!\!&\beta_{j},\!\!\!\!&\beta_{j+\sigma(1)},\!\!\!\!&\cdots,\!\!\!\!&\beta_{j+\sigma(k-1-j)}}\\
&+\sum _{\sigma\in\mathfrak{S}_{\alpha'}}\sum _{l=1}^jZ\bjwvec{\alpha_1,\!\!\!\!&\cdots,\!\!\!\!&\alpha_{l-1},\!\!\!\!&\alpha_l,\!\!\!\!&\cdots,\!\!\!\!&\alpha_j,\!\!\!\!&\alpha_{j+1},\!\!\!\!&\cdots,\!\!\!\!&\alpha_{k-1}}
{\beta_1,\!\!\!\!&\cdots,\!\!\!\!&\beta_{l-1},\!\!\!\!&\beta_{l}+\beta_{k},\!\!\!\!&\cdots,\!\!\!\!&\beta_{j},\!\!\!\!&\beta_{j+\sigma(1)},\!\!\!\!&\cdots,\!\!\!\!&\beta_{j+\sigma(k-1-j)}}\\
&+\sum _{\sigma\in\mathfrak{S}_{\alpha'}}\sum _{l=1}^{k-j}Z\bjwvec{\alpha_1,\!\!\!\!&\cdots,\!\!\!\!&\alpha_j,\!\!\!\!&\alpha_{j+1},\!\!\!\!&\cdots,\!\!\!\!&\alpha_{j+l-1},\!\!\!\!&0,\!\!\!\!&\alpha_{j+l},\!\!\!\!&\cdots,\!\!\!\!&\alpha_{k-1}}
{\beta_1,\!\!\!\!&\cdots,\!\!\!\!&\beta_{j},\!\!\!\!&\beta_{j+\sigma(1)},\!\!\!\!&\cdots,\!\!\!\!&\beta_{j+\sigma(l-1)},\!\!\!\!&\beta_{k},\!\!\!\!&\beta_{j+\sigma(l)},\!\!\!\!&\cdots,\!\!\!\!&\beta_{j+\sigma(k-1-j)}}\\
&+\sum _{\sigma\in\mathfrak{S}_{\alpha'}}\sum _{l=1}^{k-j-1}Z\bjwvec{\alpha_1,\!\!\!\!&\cdots,\!\!\!\!&\alpha_j,\!\!\!\!&\alpha_{j+1},\!\!\!\!&\cdots,\!\!\!\!&\alpha_{j+l-1},\!\!\!\!&\alpha_{j+l},\!\!\!\!&\cdots,\!\!\!\!&\alpha_{k-1}}
{\beta_1,\!\!\!\!&\cdots,\!\!\!\!&\beta_{j},\!\!\!\!&\beta_{j+\sigma(1)},\!\!\!\!&\cdots,\!\!\!\!&\beta_{j+\sigma(l-1)},\!\!\!\!&\beta_{j+\sigma(l)}+\beta_{k},\!\!\!\!&\cdots,\!\!\!\!&\beta_{j+\sigma(k-1-j)}}
\end{align*}
By Example \mref{example:z(sr)}, $Z\bwvec{{0}}{{\beta_{k}}}=-t-\frac{1}{2}$.
Applying the induction hypothesis shows that the left-hand side and the first two terms on the right-hand side are independent of $\beta$.
For the fourth term on the right-hand side, we have 
\begin{align*}&\sum _{\sigma\in\mathfrak{S}_{\alpha'}}\sum _{l=1}^{k-j-1}Z\bjwvec{\alpha_1,\!\!\!\!&\cdots,\!\!\!\!&\alpha_j,\!\!\!\!&\alpha_{j+1},\!\!\!\!&\cdots,\!\!\!\!&\alpha_{j+l-1},\!\!\!\!&\alpha_{j+l},\!\!\!\!&\cdots,\!\!\!\!&\alpha_{k-1}}
{\beta_1,\!\!\!\!&\cdots,\!\!\!\!&\beta_{j},\!\!\!\!&\beta_{j+\sigma(1)},\!\!\!\!&\cdots,\!\!\!\!&\beta_{j+\sigma(l-1)},\!\!\!\!&\beta_{j+\sigma(l)}+\beta_{k},\!\!\!\!&\cdots,\!\!\!\!&\beta_{j+\sigma(k-1-j)}}\\
=&\sum _{l=1}^{k-j-1}\sum _{\sigma\in\mathfrak{S}_{\alpha'}}Z\bjwvec{\alpha_1,\!\!\!\!&\cdots,\!\!\!\!&\alpha_j,\!\!\!\!&\alpha_{j+1},\!\!\!\!&\cdots,\!\!\!\!&\alpha_{j+l-1},\!\!\!\!&\alpha_{j+l},\!\!\!\!&\cdots,\!\!\!\!&\alpha_{k-1}}
{\beta_1,\!\!\!\!&\cdots,\!\!\!\!&\beta_{j},\!\!\!\!&\beta_{j+\sigma(1)},\!\!\!\!&\cdots,\!\!\!\!&\beta_{j+\sigma(l-1)},\!\!\!\!&\beta_{j+\sigma(l)}+\beta_{k},\!\!\!\!&\cdots,\!\!\!\!&\beta_{j+\sigma(k-1-j)}}\\
=&\sum _{l=1}^{k-j-1}\sum _{s=1}^{k-j-1}\sum _{\sigma\in\mathfrak{S}_{\alpha'}, \sigma (l)=s}Z\bjwvec{\alpha_1,\!\!\!\!&\cdots,\!\!\!\!&\alpha_j,\!\!\!\!&\alpha_{j+1},\!\!\!\!&\cdots,\!\!\!\!&\alpha_{j+l-1},\!\!\!\!&\alpha_{j+l},\!\!\!\!&\cdots,\!\!\!\!&\alpha_{k-1}}
{\beta_1,\!\!\!\!&\cdots,\!\!\!\!&\beta_{j},\!\!\!\!&\beta_{j+\sigma(1)},\!\!\!\!&\cdots,\!\!\!\!&\beta_{j+\sigma(l-1)},\!\!\!\!&\beta_{j+s}+\beta_{k},\!\!\!\!&\cdots,\!\!\!\!&\beta_{j+\sigma(k-1-j)}}\\
=&\sum _{s=1}^{k-j-1}\sum _{l=1}^{k-j-1}\sum _{\sigma\in\mathfrak{S}_{\alpha'} ,\sigma (l)=s}Z\bjwvec{\alpha_1,\!\!\!\!&\cdots,\!\!\!\!&\alpha_j,\!\!\!\!&\alpha_{j+1},\!\!\!\!&\cdots,\!\!\!\!&\alpha_{j+l-1},\!\!\!\!&\alpha_{j+l},\!\!\!\!&\cdots,\!\!\!\!&\alpha_{k-1}}
{\beta_1,\!\!\!\!&\cdots,\!\!\!\!&\beta_{j},\!\!\!\!&\beta_{j+\sigma(1)},\!\!\!\!&\cdots,\!\!\!\!&\beta_{j+\sigma(l-1)},\!\!\!\!&\beta_{j+s}+\beta_{k},\!\!\!\!&\cdots,\!\!\!\!&\beta_{j+\sigma(k-1-j)}}\\
=&\sum _{s=1}^{k-j-1}Z\bjwvec{\alpha_1,\!\!\!\!&\cdots,\!\!\!\!&\alpha_j,\!\!\!\!&\alpha_{j+1},\!\!\!\!&\cdots,\!\!\!\!&\alpha_{j+l-1},\!\!\!\!&\alpha_{j+l},\!\!\!\!&\cdots,\!\!\!\!&\alpha_{k-1}}
{\beta_1,\!\!\!\!&\cdots,\!\!\!\!&\beta_{j},\!\!\!\!&\beta_{j+1},\!\!\!\!&\cdots,\!\!\!\!&\beta_{j+s-1},\!\!\!\!&\beta_{j+s}+\beta_{k},\!\!\!\!&\cdots,\!\!\!\!&\beta_{k-1}}^{\mathfrak{S}_{\alpha'}},
\end{align*}
which is also independent of $\beta$ by the induction hypothesis.
So the third term on the right-hand side is independent of $\beta$.
Note that $\alpha_j>0$ and $\alpha_{j+1}=\cdots=\alpha_{k}=0$, we see the  third term  on the right-hand side equals $Z\bwvec{\alpha}{\beta}^{\mathfrak{S}_{\alpha}}$. This completes the induction.
\end{proof}

The next result shows that for any positive integer $\delta$, $Z\bwvec{\alpha}{\alpha+\delta}$ is independent of the choice of $\delta$, where
$\alpha+\delta:=(\alpha_1+\delta,\cdots,\alpha_k+\delta)$.

\begin{theorem}\mlabel{thm:mainthmwildms}
Let $\alpha=(\alpha_1,\alpha_2,\cdots,\alpha_k)$ be a weak composition. Then for any $\delta\in \mathbb{P}$,
\begin{enumerate}
\item\mlabel{thmitem:skgeq0} when $\alpha_k>0$, we have $Z\bwvec{\alpha}{\alpha+\delta}=M_{{\alpha}}$;
\item\mlabel{thmitem:sk=0} when $\alpha_k=0$, we have
\begin{align*}
Z\bwvec{\alpha}{\alpha+\delta}=\frac{1}{|{\mathfrak{S}_{\alpha}}|}Z\bwvec{\alpha}{\beta}^{\mathfrak{S}_{\alpha}},
\end{align*}
where $\beta$ is an arbitrary composition of length $k$.
\item\mlabel{thmitemalk0s} If $\alpha_k>0$, then for any nonnegative integer $s$ and composition $\beta$ with $\ell(\beta)=k$, we have 
$$Z\bwvec{\alpha,0^s}{\beta,\delta^s}=Z\bwvec{\alpha,0^s}{\delta^{k+s}}.$$
\item\mlabel{thmitemNaldltall} $Z\bwvec{\alpha}{\alpha+\delta}=Z\bwvec{\alpha}{\delta^k}$. 
\end{enumerate}
\end{theorem}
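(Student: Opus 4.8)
The plan is to deduce all four items from two facts already in hand: Corollary~\mref{cor:phi+=phiskneq0}, which evaluates $Z\bwvec{\alpha}{\beta}$ to $M_\alpha$ whenever $\alpha_k>0$ (independently of $\beta$), and Lemma~\mref{thm:gzeta}, which says the symmetrized function $Z\bwvec{\alpha}{\beta}^{\mathfrak{S}_{\alpha}}$ does not depend on $\beta$. The one genuinely new input I would isolate first is a \emph{symmetrization collapse} observation: writing $j$ for the index with $\alpha_j>0$ and $\alpha_{j+1}=\cdots=\alpha_k=0$, if a direction composition $\gamma=(\gamma_1,\dots,\gamma_k)$ has all of its trailing entries equal, $\gamma_{j+1}=\cdots=\gamma_k$, then every $\sigma\in\mathfrak{S}_{\alpha}$ in the defining sum of $Z\bwvec{\alpha}{\gamma}^{\mathfrak{S}_{\alpha}}$ merely reshuffles equal entries, leaving $\alpha$ and $\gamma_1,\dots,\gamma_j$ fixed and the whole bicomposition unchanged, so $Z\bwvec{\alpha}{\gamma}^{\mathfrak{S}_{\alpha}}=|\mathfrak{S}_{\alpha}|\,Z\bwvec{\alpha}{\gamma}$. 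Combined with Lemma~\mref{thm:gzeta}, this yields the clean consequence that for any $\gamma$ with constant trailing block, $Z\bwvec{\alpha}{\gamma}=\tfrac{1}{|\mathfrak{S}_{\alpha}|}Z\bwvec{\alpha}{\beta}^{\mathfrak{S}_{\alpha}}$, with the right-hand side independent of $\beta$; in particular $Z\bwvec{\alpha}{\gamma}$ takes the same value for all such $\gamma$.

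With this in place the items fall out quickly. For item~\mref{thmitem:skgeq0}, when $\alpha_k>0$ the integer $\delta\in\PP$ forces $\alpha+\delta\in\PP^k$, so $\wvec{\alpha}{\alpha+\delta}$ is an \ulwb and Corollary~\mref{cor:phi+=phiskneq0} gives $Z\bwvec{\alpha}{\alpha+\delta}=M_\alpha$ at once. For item~\mref{thmitem:sk=0}, when $\alpha_k=0$ I would take $\gamma=\alpha+\delta$; since $\alpha_{j+1}=\cdots=\alpha_k=0$, the trailing entries are $\gamma_{j+1}=\cdots=\gamma_k=\delta$, so the collapse observation applies and yields exactly $Z\bwvec{\alpha}{\alpha+\delta}=\tfrac{1}{|\mathfrak{S}_{\alpha}|}Z\bwvec{\alpha}{\beta}^{\mathfrak{S}_{\alpha}}$ for arbitrary $\beta$.

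For item~\mref{thmitemNaldltall} I would split on $\alpha_k$. If $\alpha_k>0$, both $\alpha+\delta$ and $\delta^k$ lie in $\PP^k$, and Corollary~\mref{cor:phi+=phiskneq0} evaluates $Z\bwvec{\alpha}{\alpha+\delta}$ and $Z\bwvec{\alpha}{\delta^k}$ both to $M_\alpha$. If $\alpha_k=0$, then both $\gamma=\alpha+\delta$ and $\gamma=\delta^k$ have constant trailing block equal to $\delta$, so the collapse observation shows each equals $\tfrac{1}{|\mathfrak{S}_{\alpha}|}Z\bwvec{\alpha}{\beta}^{\mathfrak{S}_{\alpha}}$, hence they agree. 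For item~\mref{thmitemalk0s}, which concerns $(\alpha,0^s)$ with $\alpha_k>0$, the last positive index of $(\alpha,0^s)$ is $k$, so $\mathfrak{S}_{(\alpha,0^s)}=\mathbb{S}_s$ permutes precisely the final $s$ directions; both $(\beta,\delta^s)$ and $\delta^{k+s}$ have those final $s$ entries all equal to $\delta$, so the collapse observation gives $Z\bwvec{\alpha,0^s}{\beta,\delta^s}^{\mathfrak{S}_{(\alpha,0^s)}}=s!\,Z\bwvec{\alpha,0^s}{\beta,\delta^s}$ and likewise with $\delta^{k+s}$. Lemma~\mref{thm:gzeta} equates the two symmetrized expressions, and since $\QQ\subseteq\bfk$ we cancel $s!$ to conclude.

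I expect no serious obstacle: the real difficulty has already been absorbed into Lemma~\mref{thm:gzeta}, and the present theorem is essentially its harvesting. The one point deserving care is the bookkeeping in the collapse observation—confirming that $\sigma\in\mathfrak{S}_{\alpha}$ acts only on the trailing direction indices $j+1,\dots,k$ and fixes $\alpha$ together with $\beta_1,\dots,\beta_j$, so that equal trailing entries genuinely force every summand to coincide. I would also flag at the outset that the divisions by $|\mathfrak{S}_{\alpha}|$ and by $s!$ are legitimate precisely because $\bfk$ contains $\QQ$, which is where the standing convention is invoked.
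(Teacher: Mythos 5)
Your proposal is correct and takes essentially the same route as the paper: item (a) directly from Corollary~\mref{cor:phi+=phiskneq0}, and items (b)--(d) by combining the symmetrization collapse (equal trailing direction entries force every summand of $Z\bwvec{\alpha}{\gamma}^{\mathfrak{S}_{\alpha}}$ to coincide, giving $Z\bwvec{\alpha}{\gamma}^{\mathfrak{S}_{\alpha}}=|\mathfrak{S}_{\alpha}|\,Z\bwvec{\alpha}{\gamma}$) with the direction-independence of Lemma~\mref{thm:gzeta}. Your isolating the collapse observation as a standalone step is merely a cleaner packaging of what the paper does inline (its proof of item (c) cites item (b) where it is really invoking this same collapse for an arbitrary direction with constant trailing block), so the mathematics is identical.
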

\begin{proof}
\mref{thmitem:skgeq0} This follows directly from Corollary \mref{cor:phi+=phiskneq0}.

\smallskip

\noindent
\mref{thmitem:sk=0}
Since $\mathfrak{S}_{\alpha}$ acts on  $Z\bwvec{\alpha}{\alpha+\delta}$ permuting the last components of ${\alpha}+\delta$ that equal $\delta$,
we obtain
\begin{align*}
Z\bwvec{\alpha}{\alpha+\delta}
=\frac{1}{|{\mathfrak{S}_{\alpha}}|}Z\bwvec{\alpha}{\alpha+\delta}^{\mathfrak{S}_{\alpha}}.
\end{align*}
By Lemma \mref{thm:gzeta}, $Z\bwvec{\alpha}{\beta}^{\mathfrak{S}_{\alpha}}$ is independent of the choice of $\beta$,
so $Z\bwvec{\alpha}{\alpha+\delta}^{\mathfrak{S}_{\alpha}}=Z\bwvec{\alpha}{\beta}^{\mathfrak{S}_{\alpha}}$ and the proof follows.
\smallskip

\noindent
\mref{thmitemalk0s}
By Item \mref{thmitem:sk=0},  we have
\vspace{-.3cm}
\begin{align*}
Z\bwvec{\alpha,0^s}{\beta,\delta^s}
=\frac{1}{|{\mathfrak{S}_{(\alpha,0^s)}}|}Z\bwvec{\alpha,0^s}{\delta^{k+s}}^{\mathfrak{S}_{(\alpha,0^s)}}=Z\bwvec{\alpha,0^s}{\delta^{k+s}}
\end{align*}
since the action of ${\mathfrak{S}_{(\alpha,0^s)}}$ leaves $Z\bwvec{\alpha,0^s}{\delta^{k+s}}$ unchanged.
\smallskip

\noindent
\mref{thmitemNaldltall} Let $\delta$ be an arbitrary positive integer. If $\alpha_k>0$, then,  by Corollary \mref{cor:phi+=phiskneq0}, $Z\bwvec{\alpha}{\alpha+\delta}={M}_{{\alpha}}=Z\bwvec{\alpha}{\delta^k}$.
If $\alpha_k=0$, then by Item \mref{thmitemalk0s}, 
$
Z\bwvec{\alpha}{\alpha+\delta}=Z\bwvec{\alpha}{\delta^k}.
$
\end{proof}

By Theorem \mref{thm:mainthmwildms}, when $\alpha$ is a left weak composition, $Z\bwvec{\alpha}{\delta^k}=Z\bwvec{\alpha}{\alpha+\delta}$ coincides with the monomial left weak quasisymmetric function
$M_{\alpha}$. So we may regard $Z\bwvec{\alpha}{\delta^k}$ as an extension of the notion $M_{\alpha}$, from $\alpha$ being left weak compositions to being weak compositions, justifying the following notion.

\begin{defn}\mlabel{def:mrqf}
The {\bf renormalized monomial quasisymmetric function} of a weak composition $\alpha=(\alpha_1,\cdots,\alpha_k)$ is
\vspace{-.5cm}
\begin{align*}
{M}_{{\alpha}}:=Z\bwvec{\alpha}{\delta^k}.
\end{align*}
A \name{renormalized quasisymmetric function} is a $\bfk$-linear combination of renormalized monomial quasisymmetric functions. Let $\RenQSym$ denote the set of renormalized quasisymmetric functions. 
\end{defn}

\begin{exam}\mlabel{exam:m000}
The following are some examples of renormalized monomial quasisymmetric functions. Let $s$ be in $\PP$.
\begin{align*}
{M}_{0}=-t-\frac{1}{2},\qquad {M}_{(0,0)}=\frac{1}{2}t^2+t+\frac{3}{8}, \qquad {M}_{(0,0,0)}=-\frac{1}{6}t^3-\frac{3}{4}t^2-\frac{23}{24}t-\frac{5}{16},
\end{align*}
\begin{align*}
{M}_{(s,0)}=&-\Big(t+\frac{1}{2}\Big)\widehat{M}_{\wvec{s}{0}}-\widehat{M}_{\wvec{s}{1}} = -(t+\frac{3}{2})M_s-M_{(0,s)},\\
{M}_{(0,s,0)}=&-\Big(t+\frac{1}{2}\Big)\widehat{M}_{\wvec{0,s}{0,0}}-\widehat{M}_{\wvec{0,s}{0,1}}=-(t+\frac{5}{2})M_{(0,s)}-2M_{(0,0,s)}
\end{align*}
\begin{align*}
{M}_{(s,0,0)}=\Big(\frac{t^2}{2}+t+\frac{3}{8}\Big)\widehat{M}_{\wvec{s}{0}}+(t+1)\widehat{M}_{\wvec{s}{1}}+\frac{1}{2}\widehat{M}_{\wvec{s}{2}}
=\Big(\frac{t^2}{2}+2t+\frac{15}{8}\Big) {M}_{s}+(t+\frac{3}{2}) {M}_{(0,s)}+{M}_{(0,0,s)}.
\end{align*}
\end{exam}

Observe that all the renormalized monomial quasisymmetric functions given in Example \ref{exam:m000} are polynomials in $t$ with coefficients in $\LWQSym$.
This turns out to be a general phenomenon, as shown in Theorem \mref{thm:rqflmcasymm0}.

\section {Properties of renormalized quasisymmetric functions}
\mlabel{sec:reln}

In this section, we obtain several properties on the structure of renormalized quasisymmetric functions of weak compositions. We first prove that the product of renormalized  monomial quasisymmetric functions satisfies the quasi-shuffle relation, just like their counter parts for compositions. We then
show that the algebra $\GSym$ coincides with the algebra $\LWCQSym$ of left weak quasisymmetric functions, and the algebra $\RenQSym$ of renormalized quasisymmetric functions
is the polynomial algebra in the variable $M_0$ over the ring $\LWCQSym$.
We further display two linear bases for $\RenQSym$ and apply them to equip renormalized quasisymmetric functions with the structures of a
Hopf algebra and a free commutative Rota-Baxter algebra.

\subsection{Quasi-shuffle relation of renormalized quasisymmetric functions}

We now show that the renormalized monomial quasisymmetric functions satisfy the quasi-shuffle relation.

\begin{theorem}\mlabel{thm:widetildeMsquasishuffle}
Renormalized monomial quasisymmetric functions satisfy the quasi-shuffle relation, that is,
\begin{align*}
{M}_{\alpha}{M}_{\beta}={M}_{\alpha*\beta}
\end{align*}
for any weak compositions $\alpha$ and $\beta$. Therefore the set $\RenQSym$ of all renormalized quasisymmetric functions is a subalgebra of $\bfk[[X]][t]$.
\end{theorem}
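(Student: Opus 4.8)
The plan is to push the quasi-shuffle relation for the directional functions $Z$ (Corollary~\ref{cor:Z(s,s)starrul}) down to the renormalized functions; the one genuine difficulty is that the canonical all-$\delta$ direction is \emph{not} preserved by the quasi-shuffle product of bicompositions. Fix a single $\delta\in\PP$ for both factors. By Definition~\ref{def:mrqf} and Theorem~\ref{thm:mainthmwildms} we have $M_\alpha=Z\bwvec{\alpha}{\delta^k}$ and $M_\beta=Z\bwvec{\beta}{\delta^\ell}$, so Corollary~\ref{cor:Z(s,s)starrul} (that $Z$ is an algebra homomorphism) gives
\[
M_\alpha M_\beta=Z\Big(\wvec{\alpha}{\delta^k}\ast\wvec{\beta}{\delta^\ell}\Big).
\]
I would first note that the semigroup projection $\NN\times\PP\to\NN$, $\wvec{s}{r}\mapsto s$, induces an algebra homomorphism $\QS(\NN\times\PP)\to\QS(\NN)=\bfk\WC$ (forgetting the direction row), carrying $\wvec{\alpha}{\delta^k}\ast\wvec{\beta}{\delta^\ell}$ to $\alpha\ast\beta$. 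Thus the top rows of the bicompositions appearing in $\wvec{\alpha}{\delta^k}\ast\wvec{\beta}{\delta^\ell}$ are exactly the weak compositions $\gamma$ occurring in $\alpha\ast\beta$, with the same multiplicities $c_\gamma$; each such $\gamma$ carries directions $d$ having entry $2\delta$ at the merge positions and $\delta$ elsewhere. It therefore suffices to prove, for each fixed $\gamma$, that $\sum_{d}Z\bwvec{\gamma}{d}=c_\gamma M_\gamma$, the sum running over the directions $d$ attached to $\gamma$; summing over $\gamma$ then yields $M_{\alpha\ast\beta}$.

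The engine is an averaging identity read off from Theorem~\ref{thm:mainthmwildms}(b),(d) and Lemma~\ref{thm:gzeta}: if $\gamma$ has its last positive entry in position $j$ and $r=\ell(\gamma)-j$ trailing zeros, then for \emph{every} composition $d$ of length $\ell(\gamma)$,
\[
\sum_{\sigma\in\mathbb{S}_r}Z\bwvec{\gamma_1,\cdots,\gamma_j,0,\cdots,0}{d_1,\cdots,d_j,d_{j+\sigma(1)},\cdots,d_{j+\sigma(r)}}=r!\,M_\gamma .
\]
Crucially the right-hand side is $r!\,M_\gamma$ whatever the directions $d_1,\dots,d_j$ on the positive block are and whatever values sit on the trailing block: only the $\mathbb{S}_r$-symmetrization over the $r$ trailing zeros is needed. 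This symmetrization is unavoidable, since a single $Z\bwvec{\gamma}{d}$ genuinely depends on $d$; for instance a direct computation gives $Z\bwvec{0,0}{r_1,r_2}=\tfrac{t^2}{2}+t+\tfrac13+\tfrac1{12}\tfrac{r_2}{r_1+r_2}$, which is not equal to $M_{(0,0)}$ unless $r_1=r_2$, so no term-by-term comparison can work.

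With this identity available, I would group, for each fixed $\gamma$, the occurring directions $d$ by their restriction $(d_1,\dots,d_j)$ to the positive block. The crucial combinatorial claim is that for each such restriction the multiset of trailing parts $(d_{j+1},\dots,d_{j+r})$ is closed under the $\mathbb{S}_r$-action permuting the $r$ trailing coordinates, i.e.\ is a disjoint union of full $\mathbb{S}_r$-orbits. Granting this, the averaging identity collapses each orbit to $r!\,M_\gamma$, so the terms with top row $\gamma$ sum to $c_\gamma M_\gamma$, as required. This $\mathbb{S}_r$-invariance is the heart of the matter and the main obstacle: the $r$ trailing zeros of $\gamma$ are produced by quasi-shuffling the all-zero tails of $\alpha$ and $\beta$, and because these entries are indistinguishable in the top row, interchanging two trailing slots ought to carry quasi-shuffle terms to quasi-shuffle terms with the attached directions interchanged. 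I would make this precise by constructing, for each pair of adjacent trailing positions, an explicit involution on the set of quasi-shuffle terms that swaps the two corresponding directions while fixing the top row and all remaining directions; closure under adjacent transpositions then gives closure under all of $\mathbb{S}_r$. The care needed is that a trailing slot may be a lone zero from $\alpha$, a lone zero from $\beta$, or a merged pair of zeros, and the involution must match these three types correctly across the swap while respecting the orders within $\alpha$ and within $\beta$.

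Finally, the relation $M_\alpha M_\beta=M_{\alpha\ast\beta}$ exhibits the product of two spanning elements of $\RenQSym$ as a $\bfk$-linear combination of renormalized monomial quasisymmetric functions; together with the unit $M_\emptyset=1$ this shows $\RenQSym$ is closed under multiplication and is therefore a subalgebra of $\bfk[[X]][t]$, each $M_\gamma$ lying in $\GSym[t]\subseteq\bfk[[X]][t]$.
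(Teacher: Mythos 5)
Your strategy is sound, and every ingredient you import from the paper is used correctly: $M_\alpha M_\beta=Z\big(\wvec{\alpha}{\delta^k}\ast\wvec{\beta}{\delta^\ell}\big)$ by Corollary~\ref{cor:Z(s,s)starrul}; the projection $\QS(\NN\times\PP)\to\QS(\NN)$ does match top rows with multiplicities; and your averaging identity $\sum_{\sigma\in\mathbb{S}_r}Z\bwvec{\gamma}{d_\sigma}=r!\,M_\gamma$ follows from Lemma~\ref{thm:gzeta} together with Theorem~\ref{thm:mainthmwildms}(b),(d), for arbitrary directions on the positive block. Your computed value $Z\bwvec{0,0}{r_1,r_2}=\frac{t^2}{2}+t+\frac13+\frac{1}{12}\frac{r_2}{r_1+r_2}$ is also correct and makes the right point: individual $Z$-values genuinely depend on the direction, so some symmetrization is unavoidable. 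This is, however, a genuinely different route from the paper's. The paper never analyzes the full quasi-shuffle expansion combinatorially; it runs a three-stage induction ($0^m$ against $0^n$, then $0^m$ against $(\alpha,0^n)$, then $(\alpha,0^m)$ against $(\beta,0^n)$ with $\alpha,\beta$ left weak), using quasi-shuffle identities such as $0^{n+1}=\frac{1}{n+1}0^n\ast 0-\frac{n}{n+1}0^n$ and $(\alpha,0^n)=0^n\ast\alpha+\sum_i r_i(\alpha^{(i)},0^{k_i})$ to reduce every step to terms whose trailing zeros all carry the same direction $\delta$, where Theorem~\ref{thm:mainthmwildms}(c) applies term by term; explicit symmetrization occurs only in the single base case $M_{0^m}M_0$. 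Your argument is uniform and explains the identity as a symmetry of the expansion; the paper's trades that conceptual transparency for freedom from any new combinatorial lemma.

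The one real gap is the step you yourself flag and defer: the claim that, for fixed top row $\gamma$ (with $r$ trailing zeros) and fixed data on the positive block, the multiset of trailing direction vectors is a disjoint union of full $\mathbb{S}_r$-orbits. Without a proof of this, the argument is incomplete. The claim is true, and there is a cleaner proof than your adjacent-transposition involutions. Group the terms by the \emph{full} combinatorial data of the positive block (which letters of $\alpha$ and of $\beta$ occupy positions $1,\dots,j$, and how they are interleaved or merged), not merely by the direction restriction $(d_1,\dots,d_j)$; this grouping is finer, and a union of invariant multisets is invariant, so it suffices. For such a group, the leftover letters are forced to be the last $u'$ zeros of $\alpha$ and the last $v'$ zeros of $\beta$ (anything not in the positive block sits after position $j$ in $\gamma$, hence is a zero, and forms a suffix of its word). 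Since $\gamma$ fixes $r$, the number of merged trailing slots is forced: $p=u'+v'-r$. A trailing arrangement is then determined by (i) the set of positions of the $p$ merged slots among the $r$ slots, which is exactly the direction pattern, and (ii) the choice of which of the $r-p$ unmerged slots receive $\alpha$-zeros; because the zeros of each word must appear in their given order, choice (ii) contributes exactly $\binom{r-p}{u'-p}$ arrangements, \emph{independently of the pattern chosen in (i)}. Hence every pattern with $p$ entries $2\delta$ occurs with the same multiplicity, which is precisely $\mathbb{S}_r$-invariance. With this lemma inserted, your orbit-collapsing step gives $\sum_d Z\bwvec{\gamma}{d}=c_\gamma M_\gamma$ for each $\gamma$, and the concluding deduction that $\RenQSym$ is a subalgebra of $\bfk[[X]][t]$ is fine.
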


\begin {proof} First we verify $ M_{0^m}M_{0^n}=M_{0^m * 0^n}$ by inductions on $n$.
Applying Corollary~\mref{cor:Z(s,s)starrul}, for $m\ge 1$, 
$$M_{0^{m}}M_0=Z\bwvec {0^{m}}{\delta^{m}}Z\bwvec {0}{\delta}=(m+1)Z\bwvec {0^{m+1}}{\delta^{m+1}}+\sum_{\beta} Z\bwvec {0^{m}}{\beta}
$$
where $\beta $ runs over all compositions with $m-1$ components equaling $\delta$ and one component equaling $2\delta$.
By Theorem \mref{thm:mainthmwildms},
$$
\sum_{\beta}  Z\bwvec {0^{m}}{\beta}=\frac{1}{(m-1)!}Z\bwvec {0^{m}}{2\delta,\delta^{m-1}}^{\mathfrak{S}_{0^m}}
=mZ\bwvec {0^{m}}{\delta^{m}}=mM_{0^m}.
$$
So
$$M_{0^{m}}M_0=(m+1)M_{0^{m+1}}+mM_{0^{m}}=M_{0^{m}*0}.
$$
Now let $n\ge 1$ and assume  
$M_{0^{m}}M_{0^{n}}=M_{0^{m}*0^{n}}.
$
Since $0^n*0=(n+1)0^{n+1}+n 0^n$ gives
$$0^{n+1}=\frac{1}{n+1}0^n*0-\frac{n}{n+1} 0^n,$$
we obtain
\begin{align*}
M_{0^{m}}M_{0^{n+1}}&=\frac{1}{n+1}M_{0^{m}}(M_{0^{n}}M_0-nM_{0^n})=\frac{1}{n+1}M_{0^m*0^n}M_{0}-\frac{n}{n+1}M_{0^m*0^n}\\
&=\frac{1}{n+1}M_{0^m*0^n*0}-\frac{n}{n+1}M_{0^m*0^n}=M_{0^m*(\frac{1}{n+1}0^n*0-\frac{n}{n+1}0^n)}=M_{0^{m}*0^{n+1}}.
\end{align*}

Now for $\alpha \in \LWC$, we prove 
$$M_{0^m}M_{(\alpha, 0^n)}=M_{0^m*(\alpha, 0^n)}
$$
by induction on $n\geq 0$. For $n=0$, there exist $\alpha^{(i)}\in \LWC$ and $c_i\in \mathbb{Z}$ such that
$$\wvec {0^m}{\delta ^m}*\wvec {\alpha}{\delta ^{\ell(\alpha)}}=\sum_i c_i\wvec {\alpha^{(i)}, 0^{k_i}}{\beta^{(i)}, \delta ^{k_i}}
$$
and $0^m*\alpha=\sum_i c_i(\alpha^{(i)}, 0^{k_i})$. So by Corollary \mref{cor:Z(s,s)starrul} and Theorem \mref{thm:mainthmwildms}\mref{thmitemalk0s},
\begin{align*}
M_{0^m}M_{\alpha}=Z\bwvec {0^{m}}{\delta^{m}}Z\bwvec {\alpha }{\delta^{\ell(\alpha)}}=\sum_i c_iZ\bwvec {\alpha^{(i)}, 0^{k_i}}{\beta^{(i)}, \delta ^{k_i}}
=\sum_i c_iM_{(\alpha^{(i)}, 0^{k_i})}=M_{0^m*\alpha}.\end{align*} 
For $n\ge 1$, it follows from the definition of quasi-shuffle product $*$ that there exist $\alpha^{(i)}\in \LWC$,  $k_i<n$ and $r_i\in\mathbb{Q}$ such that
$$(\alpha , 0^n)=0^n*\alpha +\sum_i r_i (\alpha^{(i)}, 0^{k_i}).
$$
Then applying the induction hypothesis on $n$ yields
\begin{align*}
M_{0^m}M_{(\alpha, 0^n)}&=M_{0^m}\Big(M_{ 0^n}M_{\alpha}+\sum_i r_i M_{(\alpha^{(i)}, 0^{k_i})}\Big)
=M_{0^m*0^n}M_{\alpha}+\sum_i r_i M_{0^m*(\alpha^{(i)}, 0^{k_i})}\\
&=M_{0^m*0^n*\alpha}+\sum_i r_i M_{0^m*(\alpha^{(i)}, 0^{k_i})}=M_{0^m*(\alpha, 0^n)}.
\end{align*}

Finally for $\alpha , \beta \in \LWC$ and $m, n\geq 0$, we prove 
$$M_{(\alpha, 0^m)}M_{(\beta, 0^n)}=M_{(\alpha, 0^m)*(\beta, 0^n)}
$$
again by induction on $n\geq 0$. For $n=0$, as above we can write
$$\wvec {\alpha, 0^m}{\delta ^{\ell(\alpha)+m}}*\wvec {\beta }{\delta ^{\ell(\beta)}} =\sum_i c_i\wvec {\gamma^{(i)}, 0^{k_i}}{\mu^{(i)}, \delta ^{k_i}},
$$
with $\gamma^{(i)}\in \LWC$ and $(\alpha, 0^m)*\beta=\sum_i c_i(\gamma^{(i)}, 0^{k_i})$. Then, by Corollary \mref{cor:Z(s,s)starrul} and Theorem \mref{thm:mainthmwildms}\mref{thmitemalk0s},
$$M_{(\alpha, 0^m)}M_{\beta }=Z\bwvec {\alpha, 0^{m}}{\delta^{\ell(\alpha)+m}}Z\bwvec {\beta }{\delta^{\ell(\beta)}}
=\sum_i c_iZ\bwvec {\gamma^{(i)}, 0^{k_i}}{\mu^{(i)}, \delta ^{k_i}}=\sum_i c_iM_{(\gamma^{(i)}, 0^{k_i})}=M_{(\alpha, 0^m)*\beta}.
$$
For $n\ge 1$, let
$$(\beta , 0^n)=0^n*\beta +\sum_i r_i (\beta^{(i)}, 0^{k_i})
$$
where $k_i<n$ and $r_i\in \QQ$. Then we inductively obtain
\begin{align*}M_{(\alpha, 0^m)}M_{(\beta, 0^n)}=&M_{(\alpha, 0^m)}\Big(M_{ 0^n}M_{\beta }+\sum_i r_i M_{(\beta^{(i)}, 0^{k_i})}\Big)\\
=&M_{(\alpha, 0^m)*0^n}M_{\beta }+\sum_i r_i M_{(\alpha, 0^m)*(\beta^{(i)}, 0^{k_i})}\\
=&M_{(\alpha, 0^m)*0^n*\beta }+\sum_i r_i M_{(\alpha, 0^m)*(\beta^{(i)}, 0^{k_i})}\\
=&M_{(\alpha, 0^m)*(\beta, 0^n)}.
\end{align*}
This completes the proof.
\end{proof}

\subsection{Bases for the algebra of  renormalized quasisymmetric functions}
We will give two linear basis of $\RenQSym$. We begin by showing that every Stirling left weak quasisymmetric function is a nonnegative integer linear combination of monomial left weak quasisymmetric functions.

Recall that Stirling numbers of the second kind, denoted  $S(m,i)$ with $m,i\in \NN$, are the numbers of partitions of an $m$-set into $i$ blocks.
For any positive integers $m$ and $n$, it is well known~\cite{Sta12} that
\begin{align*}
n^m=\sum_{i=0}^m i!S(m,i)\binom{n}{k}.
\end{align*}
Here $i!S(m,i)$ enumerates the number of surjections from an $m$-set to a $i$-set. Similarly, it is also not hard to see that
\begin{align*}
n^m=\sum_{i=0}^m i!S(m+1,i+1)\binom{n-1}{i},
\end{align*}
noting that $i!S(m+1,i+1)$ is the number of surjections $h$ from the set $[m+1]$ to the set $[i+1]$ such that $h(m+1)=i+1$. It is also a special case of Lemma~\mref{lem:filtst}.
In order to investigate the relationship between \gslwqsyms and monomial left weak quasisymmetric functions, we generalize this number to the setting of multiple sets.

Let $n_1<n_2<\cdots<n_k$ be positive integers and let $\beta=(\beta_1,\beta_2,\cdots,\beta_k)$ be a weak composition. For $1\leq t\leq k$, denote $b_j:=\sum\limits_{t=1}^j \beta_t$. Then $b_k=|\beta|$.
A map
$f:[b_k] \to [n_k]$
is called {\bf filtered} if
\begin{equation*} 
f([b_j]) \subseteq [n_j], \quad
\quad 1\leq j\leq k.
\end{equation*}
The inclusions can be equivalently replaced by
$f([b_{j-1}+1,b_j])\subseteq [n_j]$, where $b_0:=0$.
Let $T$ denote the set of filtered maps $f:[b_k]\to [n_k]$. Then by this equivalent condition, we have
$$\# T=n_1^{\beta_1}n_2^{\beta_2}\cdots n_k^{\beta_k}.$$
Here if $\beta=0^k$, then $[b_k]$ is an empty set. We assume that $T$ contains the unique empty map.

Let $\{u_1,\cdots,u_k\}$ be a set disjoint from $[b_k]$.
A map
$$\bar{f}:[b_k]\cup \{u_1,\cdots,u_k\} \to [n_k]$$
is called a \name{filtered pointed map} if
\begin{equation} \mlabel{eq:filf}
\bar{f}([b_j]) \subseteq [n_j], \quad
\bar{f}(u_j)=n_j, \quad 1\leq j\leq k.
\end{equation}
Let $\overline{T}$ denote the set of filtered pointed maps. For any $I=(i_1,\cdots,i_k)$ and any subsets $Y_j$ of $[n_{j-1}+1,n_j-1]$ with cardinality $i_j, 1\leq j\leq k$, denote 
\begin{equation}\mlabel{eq:cstir}
	c_{\beta,I}=\#\{\text{filtered pointed maps }\bar{f} \text{ such that } \mim \bar{f} \cap [n_{j-1}+1,n_j-1]=Y_j, 1\leq j\leq k\}.
\end{equation}

\begin{remark}\mlabel{rk:cstir}
	In the special case of $k=1, \beta_1=m$ and $I=\{i\}, 0\leq i\leq k$, we have
$$
	c_{m,i}:=c_{\beta_1,(i)}=i! S(\beta_1+1,i+1).
$$
Thus we may regard $c_{\beta,I}$ as a filtered pointed variation of Stirling numbers of the second kind.
\end{remark}
\begin{lemma} \mlabel{lem:filtst}
We have
\begin{equation} \mlabel{eq:cbi}
n_1^{\beta_1}n_2^{\beta_2}\cdots n_k^{\beta_k}=\sum_{I=(i_1,i_2,\cdots,i_k)} c_{\beta,I} \binom{n_1-1}{i_1}\binom{n_2-n_1-1}{i_2} \cdots\binom{n_k-n_{k-1}-1}{i_k},
\end{equation}
where $i_j\in \NN$ is such that $\sum\limits_{t=j}^ki_t\leq \sum\limits_{t=j}^k\beta_t$ for $j=1,2,\cdots,k$. Here we take the convention that $c_{0^k,I}=1$ and
$\binom{m}{n}=0$ if $m<n$.
\end{lemma}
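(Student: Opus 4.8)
The plan is to prove the identity by a double count: I will interpret both sides as cardinalities of sets of (pointed) filtered maps. By the computation immediately preceding the statement, the left-hand side is exactly $\#T$, the number of filtered maps $f\colon[b_k]\to[n_k]$, so it suffices to show that the right-hand side also equals $\#T$.

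First I would pass from $T$ to the set $\overline{T}$ of filtered pointed maps. Extending a filtered map $f$ by the forced values $\bar f(u_j)=n_j$ defines a bijection $T\to\overline{T}$, since the values on the $u_j$ are determined; hence $\#T=\#\overline{T}$. The point of working with $\overline{T}$ is that each $n_j$ now automatically lies in $\mim\bar f$, so the image of $\bar f$ in the interval $[n_{j-1}+1,n_j]$ (with the convention $n_0=0$) is governed entirely by its trace $Y_j:=\mim\bar f\cap[n_{j-1}+1,n_j-1]$ on the open gap $G_j:=[n_{j-1}+1,n_j-1]$. I would then partition $\overline{T}$ according to the tuple $(Y_1,\dots,Y_k)$, setting $i_j:=|Y_j|$ and $I=(i_1,\dots,i_k)$.

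The key step, and the one I expect to require the most care, is to verify that $c_{\beta,I}$ is well defined, i.e. that the number of $\bar f\in\overline{T}$ with prescribed traces $(Y_1,\dots,Y_k)$ depends only on the sizes $i_j=|Y_j|$ and not on the sets themselves; only then is the notation $c_{\beta,I}$ meaningful. I would prove this by a symmetry argument: given two families $(Y_j)$ and $(Y_j')$ with $|Y_j|=|Y_j'|=i_j$, choose bijections $\pi_j\colon G_j\to G_j$ with $\pi_j(Y_j)=Y_j'$ and assemble them into a permutation $\pi$ of $[n_k]$ that fixes every $n_j$ and restricts to $\pi_j$ on each gap $G_j$. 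Such a $\pi$ stabilizes every initial segment $[n_j]$ setwise, so post-composition $\bar f\mapsto\pi\circ\bar f$ preserves both filtered pointed conditions $\bar f([b_j])\subseteq[n_j]$ and $\bar f(u_j)=n_j$, and it sends maps with traces $(Y_j)$ bijectively to those with traces $(Y_j')$. This yields the desired independence.

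Finally I would assemble the count. For a fixed size tuple $I$ the number of admissible families $(Y_1,\dots,Y_k)$ is $\prod_{j=1}^k\binom{|G_j|}{i_j}=\prod_{j=1}^k\binom{n_j-n_{j-1}-1}{i_j}$, each contributing $c_{\beta,I}$ maps, and distinct families give disjoint sets of maps; summing over $I$ gives
\begin{equation*}
\#\overline{T}=\sum_{I}c_{\beta,I}\binom{n_1-1}{i_1}\binom{n_2-n_1-1}{i_2}\cdots\binom{n_k-n_{k-1}-1}{i_k},
\end{equation*}
which is the right-hand side. For the stated range of summation I would note that a term vanishes unless $i_j\leq n_j-n_{j-1}-1$ (otherwise the binomial is $0$) and unless $\sum_{t=j}^k i_t\leq\sum_{t=j}^k\beta_t$: every element of $\bigcup_{t\geq j}Y_t$ lies above $n_{j-1}$, and by the filtering condition $\bar f([b_{j-1}])\subseteq[n_{j-1}]$ such values can only be images of the $\sum_{t=j}^k\beta_t$ domain points in $[b_{j-1}+1,b_k]$, forcing $c_{\beta,I}=0$ when this inequality fails. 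The degenerate case $\beta=0^k$ is handled directly: then $[b_k]=\emptyset$, so $\overline{T}$ is the single map with image $\{n_1,\dots,n_k\}$, forcing $I=0^k$ and $c_{0^k,0^k}=1$, consistent with both sides and the stated convention.
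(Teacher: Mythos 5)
Your proof is correct and follows essentially the same route as the paper's: identify the left-hand side with $\#\overline{T}$ via the bijection $T\cong\overline{T}$, partition $\overline{T}$ according to the traces $Y_j=\mim \bar{f}\cap[n_{j-1}+1,n_j-1]$, count the families of traces with given cardinalities by the product of binomial coefficients, and note that the constraint $\sum_{t=j}^k i_t\leq\sum_{t=j}^k\beta_t$ (and the $\beta=0^k$ convention) exactly delimits the nonvanishing terms. Your only addition is the explicit verification, via gap-preserving permutations of $[n_k]$, that $c_{\beta,I}$ depends only on the sizes $i_j$ and not on the chosen sets $Y_j$ --- a well-definedness point the paper leaves implicit in its definition of $c_{\beta,I}$, and which your symmetry argument settles correctly.
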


\begin{proof}
First note that we have a bijection
$$ \overline{T} \rightarrow T, \qquad\bar{f}\mapsto \bar{f}\Big|_{[b_k]},$$
with the inverse map to be extending an $f\in T$ to a $\bar{f}$ by assigning $\bar{f}(u_j):=n_j, 1\leq j\leq k.$
Therefore,
\begin{equation} \mlabel{eq:tt'}
\# \overline{T}=\# T= n_1^{\beta_1}\cdots n_k^{\beta_k}.
\end{equation}

Next for any filtered pointed map $\bar{f}\in \overline{T}$ with the corresponding $f:=\bar{f}\Big|_{[n_k]}$, denote 
$$Y_j:=\mim \bar{f} \cap [n_{j-1}+1,n_j-1], \quad i_j:=\# Y_j.$$
By Eq.~\meqref{eq:filf}, for each $j$ with $1\leq j\leq k$, we have
 $f^{-1}(Y_j\cup \cdots\cup Y_k) \subseteq [b_{j-1}+1,b_k]$. Note that $Y_j\cup \cdots\cup Y_k$ is a disjoint union, so we have
$\sum\limits_{t=j}^k i_t \leq \sum\limits_{t=j}^k \beta_t, 1\leq j\leq k$.

Let such a collection of subsets $\bar{Y}_j, 1\leq j\leq k,$ be given.
Then $c_{\beta,I}$ is the number of filtered pointed maps with $\mim \bar{f} \cap [n_{j-1}+1,n_j-1] = \bar{Y}_j, 1\leq j\leq k$. Since there are $\bincc{n_j-n_{j-1}-1}{i_j}$ choices of the subsets of $[n_{j-1}+1,n_j-1]$ with the same cardinalities, we obtain
$$ \# \overline{T} = \sum_{I=(i_1,i_2,\cdots,i_k)} c_{\beta,I} \binom{n_1-1}{i_1}\binom{n_2-n_1-1}{i_2} \cdots\binom{n_k-n_{k-1}-1}{i_k},$$
where $i_j\in \NN$ is such that $\sum\limits_{t=j}^ki_t\leq \sum\limits_{t=j}^k\beta_t$ for $j=1,2,\cdots,k$.
Combining with Eq.~\meqref{eq:tt'}, this completes the proof.
\end{proof}

\begin{prop}\mlabel{prop:transfMtohatM}
With the notation $c_{\beta,I}$ in Eq.~\meqref{eq:cstir}, for any \ulwb
$\wvec{\alpha}{ \beta}=\wvec{\alpha_1,\alpha_2,\cdots,\alpha_k}{\beta_1,\beta_2,\cdots,\beta_k},$
the corresponding \gslwqsym in Eq.~\meqref{eq:sqsym1} is given by
$$\widehat{M}_{\wvec{\alpha}{\beta}}=\sum_{I=(i_1,i_2,\cdots,i_k)}c_{\beta,I}{M}_{(0^{i_1},\alpha_1,0^{i_2},\alpha_2,\cdots,0^{i_k},\alpha_k)},$$
where $0\leq i_j\leq \beta_j+\beta_{j+1}+\cdots+\beta_k$, $j=1,2,\cdots,k$.
\end{prop}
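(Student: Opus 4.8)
The plan is to begin from the defining expression for $\widehat{M}_{\wvec{\alpha}{\beta}}$ in Eq.~\meqref{eq:sqsym1}, replace the coefficient $n_1^{\beta_1}\cdots n_k^{\beta_k}$ by its expansion from Lemma~\mref{lem:filtst}, interchange the two summations, and then recognize each resulting inner sum as a monomial left weak quasisymmetric function. Writing the summation indices as $n_1<n_2<\cdots<n_k$, Lemma~\mref{lem:filtst} gives
\begin{align*}
\widehat{M}_{\wvec{\alpha}{\beta}}
&=\sum_{n_1<\cdots<n_k} n_1^{\beta_1}\cdots n_k^{\beta_k}\,x_{n_1}^{\alpha_1}\cdots x_{n_k}^{\alpha_k}\\
&=\sum_{n_1<\cdots<n_k}\Big(\sum_{I=(i_1,\cdots,i_k)} c_{\beta,I}\binom{n_1-1}{i_1}\cdots\binom{n_k-n_{k-1}-1}{i_k}\Big)x_{n_1}^{\alpha_1}\cdots x_{n_k}^{\alpha_k}.
\end{align*}
Swapping the order of summation is legitimate: for each fixed monomial $x_{n_1}^{\alpha_1}\cdots x_{n_k}^{\alpha_k}$ only finitely many $I$ contribute, since the binomial factors vanish outside the range $\sum_{t\ge j}i_t\le\sum_{t\ge j}\beta_t$ inherited from Lemma~\mref{lem:filtst}, which in particular forces $0\le i_j\le\beta_j+\cdots+\beta_k$ for each $j$.

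The remaining step, which I regard as the crux, is to identify the inner sum
$$\sum_{n_1<\cdots<n_k}\binom{n_1-1}{i_1}\binom{n_2-n_1-1}{i_2}\cdots\binom{n_k-n_{k-1}-1}{i_k}\,x_{n_1}^{\alpha_1}\cdots x_{n_k}^{\alpha_k}$$
with $M_{(0^{i_1},\alpha_1,0^{i_2},\alpha_2,\cdots,0^{i_k},\alpha_k)}$. To do so I would read off this monomial quasisymmetric function directly from its definition in Eq.~\meqref{eq:malphamonomiabasis}: it is a sum over strictly increasing index tuples of length $k+\sum_j i_j$, but a $0$-component contributes the trivial factor $x^0=1$, so only the positions carrying the exponents $\alpha_1,\cdots,\alpha_k$ produce variables. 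Fixing the positions $n_1<\cdots<n_k$ of these nonzero exponents, the admissible placements of the intervening $0$-markers amount to choosing $i_1$ indices from $[1,n_1-1]$, then $i_2$ indices from $[n_1+1,n_2-1]$, and so on up to $i_k$ indices from $[n_{k-1}+1,n_k-1]$; the number of such choices is exactly $\binom{n_1-1}{i_1}\binom{n_2-n_1-1}{i_2}\cdots\binom{n_k-n_{k-1}-1}{i_k}$. Hence the inner sum equals $M_{(0^{i_1},\alpha_1,\cdots,0^{i_k},\alpha_k)}$, and summing against $c_{\beta,I}$ yields the claimed identity.

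Finally, I would remark that the upper-left weak hypothesis $\alpha_k>0$ is used precisely here: it ensures that each composition $(0^{i_1},\alpha_1,\cdots,0^{i_k},\alpha_k)$ has positive last component and is therefore left weak, so that $M_{(0^{i_1},\alpha_1,\cdots,0^{i_k},\alpha_k)}$ is a well-defined power series by Lemma~\mref{lem:wccong}. The substantive work is thus the bookkeeping of the placement of the zero components and its matching with the binomial factors; the arithmetic content is supplied entirely by Lemma~\mref{lem:filtst}.
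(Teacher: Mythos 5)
Your proposal is correct and follows essentially the same route as the paper's own proof: expand $n_1^{\beta_1}\cdots n_k^{\beta_k}$ via Lemma~\mref{lem:filtst} (Eq.~\meqref{eq:cbi}), exchange the summations, and identify each inner sum with $M_{(0^{i_1},\alpha_1,\cdots,0^{i_k},\alpha_k)}$ by the same binomial count of placements of the zero components between consecutive $n_j$'s. Your added remarks on the legitimacy of the summation exchange and on where the hypothesis $\alpha_k>0$ enters are sound refinements of the same argument, not a different approach.
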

\begin{proof}
It follows from  Eqs.~\eqref{eq:sqsym1} and \eqref{eq:cbi} that
\begin{align*}
\widehat{M}_{\wvec{\alpha}{\beta}}
=&\sum_{I=(i_1,i_2,\cdots,i_k)}c_{\beta,I}\sum_{n_1<n_2<\cdots<n_k}\binom{n_1-1}{i_1}\binom{n_2-n_1-1}{i_2}\binom{n_k-n_{k-1}-1}{i_k}x_{n_1}^{\alpha_1}x_{n_2}^{\alpha_2}\cdots x_{n_k}^{\alpha_k}.
\end{align*}
By Eq.~\meqref{eq:malphamonomiabasis}, for the nonnegative integers $i_1,i_2,\cdots,i_k$, we have
\begin{align*}
{M}_{(0^{i_1},\alpha_1,0^{i_2},\alpha_2,\cdots,0^{i_k},\alpha_k)}
=&\sum x_{m_{1}}^0\cdots x_{m_{i_1}}^0 x_{n_1}^{\alpha_1}x_{m_{i_1+1}}^0\cdots x_{m_{i_1+i_2}}^0 x_{n_2}^{\alpha_2} \cdots x_{m_{i_1+\cdots+i_{k-1}+1}}^{0} \cdots x_{m_{i_1+\cdots+i_k}}^{0}  x_{n_k}^{\alpha_k}\\
=&\sum_{n_1<n_2<\cdots<n_k}\binom{n_1-1}{i_1}\binom{n_2-n_1-1}{i_2}\cdots\binom{n_k-n_{k-1}-1}{i_k}x_{n_1}^{\alpha_1}x_{n_2}^{\alpha_2}\cdots x_{n_k}^{\alpha_k},
\end{align*}
where the first summation is over all positive integer sequences
$$m_{1}<\cdots<m_{i_1}< n_1<m_{i_1+1}<\cdots<m_{i_1+i_2}<n_2<\cdots<m_{i_1+\cdots+i_{k-1}+1}<\cdots<m_{i_1+\cdots+i_k}<n_k.$$
Then the proof follows.
\end{proof}

\begin{exam} By Proposition \mref{prop:transfMtohatM}, we have
\begin{align*}
\widehat{M}_{\wvec{s}{1}}=M_{(0,s)}+M_{(s)}\ \ {\rm and }\ \
\widehat{M}_{\wvec{s}{2}}={M}_{(s)}+3{M}_{(0,s)}+2{M}_{(0,0,s)}.
\end{align*}
We also have $\widehat{M}_{\wvec{0,s}{0,1}}=2M_{(0,s)}+2M_{(0,0,s)}$ and
\begin{align*}
\widehat{M}_{\wvec{s_1,s_2}{1,1}}
=M_{(0,s_1,0,s_2)}+2M_{(0,0,s_1,s_2)}+4M_{(0,s_1,s_2)}+2M_{(s_1,s_2)}+M_{(s_1,0,s_2)}.
\end{align*}
\end{exam}

\begin{prop}\mlabel{thm:zqsym=lwqsym}
We have $\GSym= \LWCQSym$.
\end{prop}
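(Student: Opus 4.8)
The plan is to prove the equality by establishing the two inclusions of spanning sets separately; both directions follow directly from results already available, so no new computation is required. Recall that $\GSym$ is by definition the $\QQ$-span of the \gslwqsyms $\widehat{M}_{\wvec{\alpha}{\beta}}$ with $\wvec{\alpha}{\beta}$ upper-left weak, while $\LWCQSym$ is the $\QQ$-span of the monomial left weak quasisymmetric functions $M_\alpha$ with $\alpha$ a left weak composition (as in the proof of Lemma~\mref{lem:wccong}). For the inclusion $\GSym\subseteq\LWCQSym$ I would simply invoke Proposition~\mref{prop:transfMtohatM}, which expresses each generator $\widehat{M}_{\wvec{\alpha}{\beta}}$ as a finite $\NN$-linear combination of the functions $M_{(0^{i_1},\alpha_1,\cdots,0^{i_k},\alpha_k)}$. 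Since $\wvec{\alpha}{\beta}$ is upper-left weak we have $\alpha_k>0$, so every index composition occurring on the right-hand side has last component $\alpha_k>0$ and is therefore left weak. Thus each $\widehat{M}_{\wvec{\alpha}{\beta}}$ lies in $\LWCQSym$, and the first inclusion follows.

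For the reverse inclusion $\LWCQSym\subseteq\GSym$, the key observation is that every monomial left weak quasisymmetric function is already a \gslwqsym in disguise. Taking $\beta=0^k$ in the defining formula Eq.~\meqref{eq:sqsym1} collapses the weight factor $I^\beta=i_1^0\cdots i_k^0$ to $1$, so that $\widehat{M}_{\wvec{\alpha}{0^k}}=\sum_I x_I^\alpha=M_\alpha$. Because $\alpha$ is left weak, the bicomposition $\wvec{\alpha}{0^k}$ is upper-left weak, hence $\widehat{M}_{\wvec{\alpha}{0^k}}$ is by Definition~\mref{de:stqsym} one of the generators of $\GSym$. Therefore $M_\alpha\in\GSym$ for every left weak composition $\alpha$, giving the second inclusion.

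Combining the two inclusions yields $\GSym=\LWCQSym$. I do not expect any genuine obstacle here: all the combinatorial substance of the statement has already been absorbed into Proposition~\mref{prop:transfMtohatM}, which itself rests on the filtered-pointed-map enumeration of Lemma~\mref{lem:filtst}. What remains is only the bookkeeping remark that positivity of $\alpha_k$ keeps each index composition left weak in the forward direction, while the specialization $\beta=0^k$ recovers the ordinary monomials in the reverse direction.
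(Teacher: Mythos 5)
Your proof is correct and follows essentially the same route as the paper's: the inclusion $\GSym\subseteq\LWCQSym$ via Proposition~\mref{prop:transfMtohatM}, and the reverse inclusion via the identity $M_\alpha=\widehat{M}_{\wvec{\alpha}{0^{\ell(\alpha)}}}$ for $\alpha$ left weak. The only difference is that you spell out the bookkeeping (left-weakness of the index compositions $(0^{i_1},\alpha_1,\cdots,0^{i_k},\alpha_k)$, and that $\wvec{\alpha}{0^k}$ is upper-left weak) which the paper leaves implicit.
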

\begin{proof}
Note that
Proposition \mref{prop:transfMtohatM} yields $ \GSym\subseteq \LWCQSym$. On the other hand, for any left weak composition $\alpha$,
it is easy to see that
$M_{\alpha}=\widehat{M}_{\wvec{\alpha}{0^{\ell(\alpha)}}}\in  \GSym$.
Thus $ \GSym= \LWCQSym$.
\end{proof}

\begin{theorem}\mlabel{thm:rqflmcasymm0}
$\LWCQSym[t]$ coincides with the algebra $\RenQSym$ of renormalized quasisymmetric functions.
\end{theorem}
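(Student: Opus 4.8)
The plan is to prove the equality as an identity of subalgebras of $\bfk[[X]][t]$ by establishing the two inclusions $\RenQSym\subseteq \LWCQSym[t]$ and $\LWCQSym[t]\subseteq \RenQSym$ separately; each will follow by assembling results already in hand, so essentially no new computation is needed.

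For the inclusion $\RenQSym\subseteq \LWCQSym[t]$, I would argue directly from the definitions. For a weak composition $\alpha$ of length $k$, Definition~\mref{def:mrqf} sets $M_\alpha=Z\bwvec{\alpha}{\delta^k}$, and Definition~\mref{defn:zetaqsf} records that $Z\bwvec{\alpha}{\delta^k}=\phi_+\bwvec{\alpha}{\delta^k}\big|_{z=0}\in\GSym[t]$, since $\phi_+$ takes values in $\GSym[t][[z]]$ so that setting $z=0$ lands in $\GSym[t]$. Invoking $\GSym=\LWCQSym$ from Proposition~\mref{thm:zqsym=lwqsym}, each $M_\alpha$ lies in $\LWCQSym[t]$, and passing to $\bfk$-linear combinations gives the inclusion. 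This is precisely the ``general phenomenon'' foreshadowed by Example~\mref{exam:m000}.

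For the reverse inclusion $\LWCQSym[t]\subseteq \RenQSym$, I would use that $\RenQSym$ is a subalgebra of $\bfk[[X]][t]$ by Theorem~\mref{thm:widetildeMsquasishuffle}; it then suffices to place both $\LWCQSym$ and the single element $t$ inside $\RenQSym$, because the polynomial algebra $\LWCQSym[t]$ is generated over $\bfk$ by exactly these. The containment $\LWCQSym\subseteq\RenQSym$ holds because, for a left weak composition $\alpha$, Theorem~\mref{thm:mainthmwildms} identifies the renormalized $M_\alpha=Z\bwvec{\alpha}{\delta^k}$ with the ordinary monomial left weak quasisymmetric function $M_\alpha$, so the spanning set of $\LWCQSym$ sits among the renormalized monomial functions. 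To capture $t$ itself I would solve the relation $M_0=Z\bwvec{0}{\delta}=-t-\tfrac12$ from Example~\mref{exam:m000}, obtaining $t=-M_0-\tfrac12\,M_\emptyset\in\RenQSym$ (using $M_\emptyset=1$ for the scalar term).

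The genuine content sits entirely in this second inclusion, and more precisely in producing $t$ as a renormalized quasisymmetric function: the whole argument hinges on $M_0$ being an honestly nonconstant polynomial in $t$, namely the one computation $M_0=-t-\tfrac12$, together with the earlier nontrivial fact (Theorem~\mref{thm:mainthmwildms}) that renormalization reproduces the classical $M_\alpha$ on left weak compositions. Once these two inputs are granted, the equality is purely formal, a consequence of $\RenQSym$ being closed under the algebra operations; I anticipate no obstacle beyond citing the prior results in the correct order.
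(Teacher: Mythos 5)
Your proposal is correct and follows essentially the same route as the paper: both directions rest on the same four ingredients, namely $Z$ taking values in $\GSym[t]$ (Definitions~\ref{defn:zetaqsf} and~\ref{def:mrqf}), the identification $\GSym=\LWCQSym$ (Proposition~\ref{thm:zqsym=lwqsym}), the agreement $M_\alpha=Z\Big(\wvec{\alpha}{\delta^k}\Big)$ with the classical left weak quasisymmetric functions (Theorem~\ref{thm:mainthmwildms}), and the computation $M_0=-t-\tfrac12$ together with the subalgebra property from Theorem~\ref{thm:widetildeMsquasishuffle}. Your write-up merely makes explicit the generation argument ($\LWCQSym$ and $t$ generate $\LWCQSym[t]$) that the paper leaves implicit.
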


\begin{proof}
By Theorem \mref{thm:widetildeMsquasishuffle}, $\RenQSym$ is a subalgebra algebra of $\bfk[[X]][t]$.
Moreover,
$\RenQSym\subseteq \GSym[t]$ according to Definitions \mref{defn:zetaqsf} and \mref{def:mrqf}.
It follows directly from Theorem \mref{thm:mainthmwildms}\mref{thmitem:skgeq0} that $\LWCQSym \subseteq \RenQSym$, which together with
the fact that ${M}_{0}=-t-\frac{1}{2}$, as showed by Example \mref{exam:m000}, yields the inclusion $\LWCQSym[t] \subseteq \RenQSym$. Hence we have  $\RenQSym=\LWCQSym[t]$ by Proposition \mref{thm:zqsym=lwqsym}.
\end{proof}

\begin{coro}\mlabel{thm:twobasesLQWSym}
The set $\Big\{{M}_{0}^n{M}_{\alpha}\,\Big|\,n\in\NN,\alpha\in \LWC\Big\}$ is a $\bfk$-basis for $\RenQSym$.
\end{coro}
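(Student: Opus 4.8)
The plan is to combine the structure theorem for $\RenQSym$ with a triangular change of the polynomial variable. By Theorem~\ref{thm:rqflmcasymm0} we have $\RenQSym=\LWCQSym[t]$ inside $\bfk[[X]][t]$; that is, $\RenQSym$ is a free $\LWCQSym$-module with the powers $\{t^n\mid n\in\NN\}$ as a module basis. So two ingredients remain: a $\bfk$-basis of the coefficient ring $\LWCQSym$, and a transfer of the variable $t$ to $M_0$.

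First I would record that $\{M_\alpha\mid\alpha\in\LWC\}$ is a $\bfk$-basis of $\LWCQSym$. By Proposition~\ref{thm:zqsym=lwqsym} we have $\LWCQSym=\GSym$, and for $\alpha\in\LWC$ the renormalized $M_\alpha$ agrees with the ordinary left weak monomial quasisymmetric function by Theorem~\ref{thm:mainthmwildms}\ref{thmitem:skgeq0}; these series span $\LWCQSym$ by definition and are linearly independent. Granting this, $\{t^nM_\alpha\mid n\in\NN,\ \alpha\in\LWC\}$ is already a $\bfk$-basis of $\RenQSym=\LWCQSym[t]$. Next I would replace $t$ by $M_0$: since $M_0=-t-\tfrac12$ by Example~\ref{exam:m000}, expanding gives $M_0^n=(-1)^nt^n+(\text{terms of }t\text{-degree}<n)$, with every lower coefficient a scalar, hence lying in $\bfk\cdot M_\emptyset\subseteq\LWCQSym$. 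Thus $\{M_0^n\mid n\in\NN\}$ is obtained from $\{t^n\mid n\in\NN\}$ by a lower-triangular change of $\LWCQSym$-basis whose diagonal entries $(-1)^n$ are units, so every finite truncation is invertible over $\bfk$. Therefore $\{M_0^n\mid n\in\NN\}$ is again an $\LWCQSym$-module basis of $\RenQSym$, and tensoring with the $\bfk$-basis $\{M_\alpha\}$ of $\LWCQSym$ shows that $\{M_0^nM_\alpha\mid n\in\NN,\ \alpha\in\LWC\}$ is a $\bfk$-basis of $\RenQSym$.

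The triangularity of the substitution $t\mapsto M_0$ and the unit status of its diagonal entries $\pm1$ are immediate. The one substantive input drawn from outside this section is the linear independence of $\{M_\alpha\mid\alpha\in\LWC\}$, which I expect to be the main point to pin down: I would cite it from the study of left weak monomial quasisymmetric functions in~\cite{YGZ17}, or, if a self-contained argument is preferred, establish it by ordering left weak compositions and, for each, isolating the coefficient of a distinguished monomial (taking the smallest admissible index set $i_j=j$) so that any vanishing finite combination can be peeled off one composition at a time.
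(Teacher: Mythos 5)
Your proposal is correct and follows essentially the same route as the paper's own proof: both rest on Theorem~\ref{thm:rqflmcasymm0} ($\RenQSym=\LWCQSym[t]$), the fact that $\{M_\alpha \mid \alpha\in\LWC\}$ is a $\bfk$-basis of $\LWCQSym$, and the affine relation $M_0=-t-\tfrac12$, which makes $\{M_0^n \mid n\in\NN\}$ a $\bfk$-basis of $\bfk[t]$. The only difference is presentational: you spell out the lower-triangular change of basis with unit diagonal that the paper compresses into the single assertion that $M_0$ is algebraically independent over $\LWCQSym$ and its powers form a basis of $\bfk[t]$.
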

\begin{proof}
Since $M_0=-t-\frac{1}{2}$,  $M_0$ is algebraically independent over $\LWCQSym$.
Then the statement immediate follows from Theorem~\mref{thm:rqflmcasymm0} and the facts that
$\{{M}_{0}^n\,|\,n\in\NN\}$ is a $\bfk$-basis for $\bfk[t]$ and $\{{M}_{\alpha}|\alpha\in \LWC\}$ is a $\bfk$-basis for $\LWCQSym$.
\end{proof}

We next show that $\{{M}_{\alpha}\,|\,\alpha\in WC\}$ is also a $\bfk$-basis for $\RenQSym=\LWCQSym[t]$.
First note that the quasi-shuffle relation of $M_\alpha$ in Theorem~\mref{thm:widetildeMsquasishuffle} gives
\begin{align*}
{M}_{0}{M}_{0^k}=(k+1){M}_{0^{k+1}}+k{M}_{0^k}.
\end{align*}
This is precisely the recurrence relation characterizing the the divided falling factorials $(x)_n/n!$ with $(x)_n:=x(x-1)\cdots (x-n+1), n\geq 0,$ the falling factorials~\mcite{Co}. Thus we obtain

\begin{lemma}\mlabel{lem:N0k=ak}
For any positive integer $k$, we have 
\begin{align*}
{M}_{0^k}=\frac{\prod_{i=0}^{k-1}({M}_{0}-i)}{k!}=\frac{(-1)^k}{k!}\prod_{i=1}^k\Big(t+i-\frac{1}{2}\Big).
\end{align*}
\end{lemma}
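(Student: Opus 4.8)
The plan is to establish the two equalities separately. The first, $M_{0^k}=\prod_{i=0}^{k-1}(M_0-i)/k!$, I would prove by induction on $k$, driving the induction with the quasi-shuffle recurrence ${M}_{0}{M}_{0^k}=(k+1){M}_{0^{k+1}}+k{M}_{0^k}$ displayed immediately above the statement. The base case $k=1$ is immediate, since the right-hand side collapses to $M_0=M_{0^1}$. For the inductive step, I would rearrange the recurrence into the form $(k+1){M}_{0^{k+1}}=(M_0-k){M}_{0^k}$, substitute the induction hypothesis for ${M}_{0^k}$, and observe that multiplying $\prod_{i=0}^{k-1}(M_0-i)$ by the extra factor $(M_0-k)$ and dividing by $k+1$ yields exactly $\prod_{i=0}^{k}(M_0-i)/(k+1)!$. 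This is precisely the recursion characterizing the divided falling factorials $(x)_n/n!$ noted in the text, so the step is forced.

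The second equality is a purely formal evaluation at the known value ${M}_{0}=-t-\frac{1}{2}$ from Example~\mref{exam:m000}. Each factor becomes $M_0-i=-\big(t+i+\tfrac12\big)$, so the product over $i=0,\dots,k-1$ contributes the sign $(-1)^k$ together with $\prod_{i=0}^{k-1}\big(t+i+\tfrac12\big)$. Reindexing by $j=i+1$ rewrites $t+i+\tfrac12$ as $t+j-\tfrac12$ with $j$ ranging over $1,\dots,k$, producing $\prod_{j=1}^{k}\big(t+j-\tfrac12\big)$ and hence the stated closed form after dividing by $k!$.

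I do not expect any substantial obstacle: the induction is routine once the recurrence is rearranged, and the closed-form evaluation is a one-line substitution plus an index shift. The only points deserving minor care are the handling of the boundary factor in the inductive step and the bookkeeping of the index range and the accumulated sign $(-1)^k$ in the final substitution.
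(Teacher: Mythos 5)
Your proposal is correct and follows essentially the same route as the paper: the paper derives the same quasi-shuffle recurrence ${M}_{0}{M}_{0^k}=(k+1){M}_{0^{k+1}}+k{M}_{0^k}$ and identifies it as the recursion characterizing the divided falling factorials $(x)_k/k!$, which is exactly what your explicit induction verifies, and the second equality is the same substitution ${M}_0=-t-\tfrac12$ with the index shift you describe. No gaps; your write-up merely makes explicit the induction that the paper leaves implicit.
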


We now give an explicit expression of ${M}_{\alpha}$
in terms of monomial left weak quasisymmetric functions.

\begin{lemma}\mlabel{lemNalp=polyofa}
Let $\alpha=(\alpha_1,\cdots,\alpha_j,0^{k_\alpha})$ be a weak composition, where $\alpha_j\in\PP$ and $k_{\alpha}\in\NN$. Denote $\alpha'=(\alpha_1,\cdots,\alpha_{j-1})$. Then
\begin{align*}
{M}_{\alpha}=\sum_{p=0}^{k_{\alpha}}\frac{(-1)^{p}\prod_{i=1}^{k_\alpha-p}({M}_{0}-\ell(\alpha)+i)}{(k_\alpha-p)!}M_{(\alpha'\shap0^p,\alpha_j)}
=(-1)^{k_{\alpha}}\sum_{p=0}^{k_{\alpha}}\frac{\prod_{i=1}^{k_\alpha-p}(t+\ell(\alpha)-i+\frac12)}{(k_\alpha-p)!}M_{(\alpha'\shap0^p,\alpha_j)}. 
\end{align*}
\end{lemma}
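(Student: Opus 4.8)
Since $M_0=-t-\frac{1}{2}$ by Example~\mref{exam:m000}, each factor satisfies $M_0-\ell(\alpha)+i=-(t+\ell(\alpha)-i+\frac{1}{2})$, so $\prod_{i=1}^{k_\alpha-p}(M_0-\ell(\alpha)+i)=(-1)^{k_\alpha-p}\prod_{i=1}^{k_\alpha-p}(t+\ell(\alpha)-i+\frac{1}{2})$ and the two displayed formulas differ only by the global sign $(-1)^{p}(-1)^{k_\alpha-p}=(-1)^{k_\alpha}$. Thus it suffices to prove the first equality, and the plan is to induct on $k_\alpha$. When $k_\alpha=0$ the right-hand side collapses to its $p=0$ term $M_{(\alpha',\alpha_j)}=M_\alpha$, settling the base case.

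For the inductive step, set $\gamma:=(\alpha',\alpha_j)$, so $\ell(\gamma)=j$ and $\alpha=(\gamma,0^{k_\alpha})$. First I would derive a recursion in the number of trailing zeros. The quasi-shuffle of the single letter $0$ with a word $w=(w_1,\dots,w_n)$ is $0*w=\sum_{i=0}^{n}(w_1,\dots,w_i,0,w_{i+1},\dots,w_n)+n\,w$, the $n$ merge terms returning $w$ unchanged because $0+w_i=w_i$. Taking $w=(\gamma,0^{k})$ and separating the insertion slots lying weakly to the right of $\alpha_j$ (each producing $(\gamma,0^{k+1})$, and there are $k+1$ of them) from those lying inside $\alpha'$ (which run over all $j$ gaps of $\alpha'$, i.e.\ reproduce the shuffle $\alpha'\shap 0$, followed by $\alpha_j,0^{k}$), Theorem~\mref{thm:widetildeMsquasishuffle} gives
\begin{align*}
M_0\,M_{(\gamma,0^{k})}=(k+1)\,M_{(\gamma,0^{k+1})}+M_{(\alpha'\shap 0,\,\alpha_j,\,0^{k})}+(j+k)\,M_{(\gamma,0^{k})},
\end{align*}
whence
\begin{align*}
M_{(\gamma,0^{k+1})}=\frac{1}{k+1}\Bigl[(M_0-j-k)\,M_{(\gamma,0^{k})}-M_{(\alpha'\shap 0,\,\alpha_j,\,0^{k})}\Bigr].
\end{align*}

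Next I would record the shuffle identity $\sum_{\beta\in\alpha'\shap 0}(\beta\shap 0^{p})=(p+1)(\alpha'\shap 0^{p+1})$, immediate from associativity of the shuffle together with the fact that inserting one extra $0$ into the word $0^{p}$ gives $0\shap 0^{p}=(p+1)\,0^{p+1}$. Now apply the inductive hypothesis (the case $k_\alpha=k$) to $M_{(\gamma,0^{k})}$ and to each $M_{(\beta,\alpha_j,0^{k})}$ with $\beta\in\alpha'\shap 0$; these have $k$ trailing zeros and lengths $j+k$ and $j+1+k$ respectively, so the relevant products are $\prod_{i=1}^{k-p}(M_0-(j+k)+i)$ and $\prod_{i=1}^{k-p}(M_0-(j+1+k)+i)$. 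Substituting into the recursion and using the shuffle identity to collapse $\sum_{\beta}M_{(\beta\shap 0^{p},\alpha_j)}=(p+1)M_{(\alpha'\shap 0^{p+1},\alpha_j)}$, the claim for $k_\alpha=k+1$ reduces, after re-indexing $p\mapsto p+1$ in the second sum, to an identity of the coefficients of each $M_{(\alpha'\shap 0^{p},\alpha_j)}$.

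That coefficient identity is routine. Writing $a:=M_0-j-k$, one checks that the product $\prod_{i=1}^{k+1-p}\bigl(M_0-(j+k+1)+i\bigr)$ occurring both in the collapsed second sum and in the target formula equals $a\prod_{i=1}^{k-p}(a+i)$, so the comparison collapses to the Pascal-type relation $\frac{1}{(k-p)!}+\frac{p}{(k-p+1)!}=\frac{k+1}{(k-p+1)!}$ for $1\le p\le k$, while the endpoints $p=0$ and $p=k+1$ are checked directly (the latter from the $p=k$ term of the collapsed second sum). This closes the induction. I expect the only real obstacle to be the bookkeeping in the recursion: one must separate the trailing-zero insertions from the interior ones precisely enough that the shuffle $\alpha'\shap 0$ emerges with the correct multiplicities. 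Once that and the shuffle-associativity identity are in hand, the falling-factorial coefficient computation finishes the proof with no further difficulty.
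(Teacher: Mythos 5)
Your proposal is correct and follows essentially the same route as the paper: reduce the second equality to the first via $M_0=-t-\tfrac12$, induct on the number of trailing zeros, derive the recursion $M_{(\gamma,0^{k+1})}=\tfrac{1}{k+1}\bigl[(M_0-j-k)M_{(\gamma,0^{k})}-M_{(\alpha'\shap 0,\alpha_j,0^{k})}\bigr]$ from the quasi-shuffle relation of Theorem~\ref{thm:widetildeMsquasishuffle}, apply the inductive hypothesis to both terms, and collapse via $0\shap 0^{p}=(p+1)0^{p+1}$ and the Pascal-type coefficient identity. The only differences are presentational (you make explicit the shuffle-associativity identity and the final coefficient check, which the paper carries out inline), so there is nothing substantive to add.
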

\begin{proof}
We prove by induction on $k_{\alpha}$. Clearly, the assertion is true for $k_\alpha=0$.
Now assume that the first desired identity holds for all weak compositions $\beta$ with $k_{\beta}<k_{\alpha}$.
By Theorem \mref{thm:widetildeMsquasishuffle},
\begin{align*}
{M}_{0}{M}_{(\alpha',\alpha_j,0^{k_{\alpha}-1})}
={M}_{0*(\alpha',\alpha_j,0^{k_{\alpha}-1})}
={M}_{(\alpha'\shap0,\alpha_j,0^{k_{\alpha}-1})}+k_{\alpha}{M}_{\alpha}+(\ell(\alpha)-1){M}_{(\alpha',\alpha_j,0^{k_{\alpha}-1})},
\end{align*}
and hence
\vspace{-.3cm}
\begin{align*}
{M}_{\alpha}=\frac{{M}_{0}-\ell(\alpha)+1}{k_{\alpha}}
{M}_{(\alpha',\alpha_j,0^{k_{\alpha}-1})}-\frac{1}{k_{\alpha}} {M}_{(\alpha'\shap0,\alpha_j,0^{k_{\alpha}-1})}.
\end{align*}
It follows from the induction hypothesis that
\begin{align*}
{M}_{\alpha}
=&\frac{1}{k_{\alpha}}\sum_{p=0}^{k_{\alpha}-1}\frac{(-1)^{p}\prod_{i=1}^{k_\alpha-p}({M}_{0}-\ell(\alpha)+i)}{(k_\alpha-1-p)!}M_{(\alpha'\shap0^p,\alpha_j)}\\
&\qquad\qquad-\frac{1}{k_{\alpha}} \sum_{p=0}^{k_{\alpha}-1}\frac{(-1)^{p}\prod_{i=1}^{k_\alpha-1-p}({M}_{0}-\ell(\alpha)+i)}{(k_\alpha-1-p)!}M_{(\alpha'\shap0\shap0^p,\alpha_j)}\\
=&\frac{1}{k_{\alpha}}\sum_{p=0}^{k_{\alpha}-1}\frac{(-1)^{p}\prod_{i=1}^{k_\alpha-p}({M}_{0}-\ell(\alpha)+i)}{(k_\alpha-1-p)!}M_{(\alpha'\shap0^p,\alpha_j)}\\
&\qquad\qquad+\frac{1}{k_{\alpha}} \sum_{p=1}^{k_{\alpha}}\frac{(-1)^{p}\prod_{i=1}^{k_\alpha-p}({M}_{0}-\ell(\alpha)+i)}{(k_\alpha-p)!}pM_{(\alpha'\shap0^{p+1},\alpha_j)},
\end{align*}
which yields the the first equality. The second equality follows since ${M}_{0}=-t-1/2$.
\end{proof}

\begin{theorem}\mlabel{prop:nalpisakbasis}
\begin{enumerate}
	\item \mlabel{it:iso1}
The set $\{{M}_{\alpha}|\alpha\in WC\}$ is a $\bfk$-basis for $\RenQSym$.
\item \mlabel{it:iso2}
The algebras $\RenQSym$ and $\QS(\NN)$ are isomorphic. 
\end{enumerate}
\end{theorem}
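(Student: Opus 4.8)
The plan is to derive part~\mref{it:iso2} from part~\mref{it:iso1}. Sending a weak composition $\gamma$ to $M_\gamma$ and extending $\bfk$-linearly gives a map $\Phi\colon\QS(\NN)=\bfk\WC\to\RenQSym$. By Theorem~\mref{thm:widetildeMsquasishuffle} together with the linear convention $M_{\sum_\delta a_\delta\delta}=\sum_\delta a_\delta M_\delta$, for basis elements $\gamma,\gamma'\in\WC$ we have $\Phi(\gamma\ast\gamma')=M_{\gamma\ast\gamma'}=M_\gamma M_{\gamma'}=\Phi(\gamma)\Phi(\gamma')$, while $\Phi(\emptyset)=M_\emptyset=1$; hence $\Phi$ is an algebra homomorphism. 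Once part~\mref{it:iso1} is established, $\Phi$ carries the distinguished basis $\WC$ of $\QS(\NN)$ onto the basis $\{M_\gamma\mid\gamma\in\WC\}$ of $\RenQSym$, so it is an isomorphism. Thus everything reduces to part~\mref{it:iso1}.

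For part~\mref{it:iso1} I would work inside the identification $\RenQSym=\LWCQSym[t]$ of Theorem~\mref{thm:rqflmcasymm0}, whose $\bfk$-basis $\{t^nM_\mu\mid n\in\NN,\ \mu\in\LWC\}$ comes from Corollary~\mref{thm:twobasesLQWSym} after using $M_0=-t-\tfrac12$ to replace the powers $M_0^n$ by $t^n$. Write each $\gamma\in\WC$ uniquely as $\gamma=\lambda\cdot 0^m$ with $\lambda\in\LWC$ and $m\geq 0$. Lemma~\mref{lemNalp=polyofa} then reads
\begin{align*}
M_\gamma=\frac{(-1)^m}{m!}\,t^{m}M_\lambda+(\text{terms of }t\text{-degree}<m),
\end{align*}
where every $M_\mu$ occurring satisfies $\overline{\mu}=\overline{\lambda}=\overline{\gamma}$, since shuffling zeros into the non-trailing part of $\gamma$ leaves its nonzero subword unchanged, and where balancing the $t$-degree of each summand against the length of its indexing composition forces the accompanying exponent to satisfy $n+\ell(\mu)=\ell(\gamma)$. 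Crucially the leading coefficient $\tfrac{(-1)^m}{m!}$ is a unit of $\bfk$, because $\QQ\subseteq\bfk$.

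These two invariants lead me to grade $\RenQSym$ by pairs $(c,N)$, where $c=\overline{\mu}$ is a composition and $N=n+\ell(\mu)$: the basis $\{t^nM_\mu\}$ splits accordingly into free $\bfk$-modules $V_{c,N}$ of finite rank, and the displayed invariants place each $M_\gamma$ with $\overline{\gamma}=c$, $\ell(\gamma)=N$ inside $V_{c,N}$. Indexing such $\gamma$ by the numbers $(e_0,\dots,e_{\ell-1},e_\ell)$ of zeros inserted in the $\ell+1$ gaps around $c=(c_1,\dots,c_\ell)$, and indexing the basis vectors $t^nM_\mu$ of $V_{c,N}$ by $(n;f_0,\dots,f_{\ell-1})$ with $n+\sum_i f_i=N-\ell$, the correspondence $(e_0,\dots,e_{\ell-1},e_\ell)\mapsto(e_\ell;e_0,\dots,e_{\ell-1})$ is a bijection carrying $\gamma$ to the index of its leading term $t^{e_\ell}M_\lambda$. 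Ordering indices by $t$-degree then shows that, relative to $\{t^nM_\mu\}$, the family $\{M_\gamma\}$ has triangular transition matrix with diagonal entries $\tfrac{(-1)^{e_\ell}}{e_\ell!}$; being triangular with unit diagonal, this finite matrix is invertible over $\bfk$, so $\{M_\gamma\mid\overline{\gamma}=c,\ \ell(\gamma)=N\}$ is a $\bfk$-basis of $V_{c,N}$. Assembling over all $(c,N)$ yields part~\mref{it:iso1}.

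The delicate point, and the step I expect to require the most care, is the second paragraph: extracting from Lemma~\mref{lemNalp=polyofa} that every summand satisfies both $\overline{\mu}=\overline{\gamma}$ and $n+\ell(\mu)=\ell(\gamma)$, and that the top $t$-degree contribution is exactly $\tfrac{(-1)^m}{m!}t^mM_\lambda$ with pairwise distinct $\lambda$ across the $\gamma$ sharing a fixed $m$. I would also record the all-zeros case $c=\emptyset$ (so $\gamma=0^N$) separately via Lemma~\mref{lem:N0k=ak}, where $V_{\emptyset,N}=\bfk\,t^N$ has rank one and $M_{0^N}$ is a degree-$N$ polynomial in $t$ with unit leading coefficient, so that the triangularity argument applies uniformly.
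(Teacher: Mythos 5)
Your reduction of part~\mref{it:iso2} to part~\mref{it:iso1} is exactly the paper's: Theorem~\mref{thm:widetildeMsquasishuffle} makes $\gamma\mapsto M_\gamma$ an algebra homomorphism, and part~\mref{it:iso1} makes it carry a basis to a basis. The problem is in part~\mref{it:iso1}, at precisely the step you flagged as delicate: the claim that every summand $t^nM_\mu$ occurring in $M_\gamma$ satisfies $n+\ell(\mu)=\ell(\gamma)$ is false, so the elements $M_\gamma$ you place inside the graded piece $V_{c,N}$ do not in fact lie there. Concretely, by Example~\mref{exam:m000} (or Lemma~\mref{lemNalp=polyofa} applied to $\gamma=(s,0)$),
\[
M_{(s,0)}=-\Big(t+\tfrac{3}{2}\Big)M_s-M_{(0,s)}=-tM_s-\tfrac{3}{2}M_s-M_{(0,s)},
\]
and the summand $-\tfrac{3}{2}M_s$ has $n+\ell(\mu)=0+1=1\neq 2=\ell\big((s,0)\big)$. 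The source of the failure is that in Lemma~\mref{lemNalp=polyofa} the coefficient of $M_{(\alpha'\shap 0^p,\alpha_j)}$ is the falling-factorial-type product $\frac{(-1)^p}{(k_\alpha-p)!}\prod_{i=1}^{k_\alpha-p}\big(M_0-\ell(\alpha)+i\big)$, a polynomial in $t$ of degree $k_\alpha-p$ that is \emph{not homogeneous}; its lower-order terms are exactly the cross-terms escaping $V_{c,N}$. Consequently $M_{(s,0)}\notin V_{(s),2}$, the transition matrix is not a finite square matrix supported on $V_{c,N}$, and the conclusion that $\{M_\gamma\mid\overline{\gamma}=c,\,\ell(\gamma)=N\}$ is a basis of $V_{c,N}$ is false — the span of $\{M_{(0,s)},M_{(s,0)}\}$ is not even contained in $V_{(s),2}$. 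So the assembly over all $(c,N)$ does not go through as written.

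The gap is repairable, and the repair is essentially the paper's proof: replace your grading by the \emph{filtration} by degree in $M_0$ (equivalently, in $t$), so that only the leading term needs to be controlled. Writing $\gamma=(\gamma'',0^{k_\gamma})$ with $\gamma''\in\LWC$, Lemmas~\mref{lem:N0k=ak} and~\mref{lemNalp=polyofa} give
\[
M_\gamma=\frac{1}{k_\gamma!}\,M_0^{k_\gamma}M_{\gamma''}+(\text{terms of lower degree in }M_0),
\]
with everything else lumped into the remainder. For linear independence the paper extracts the top $M_0$-degree $k=\max k_\gamma$ from a vanishing linear combination and invokes the basis $\{M_0^nM_\mu\}$ of Corollary~\mref{thm:twobasesLQWSym}; for spanning it inverts the same relation, $M_0^kM_\mu=k!\,M_{(\mu,0^k)}+(\text{lower degree in }M_0)$, by induction on $k$. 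Your finer invariant $\overline{\mu}=\overline{\gamma}$ is correct and harmless (it would decompose the problem into finite-rank filtered pieces), but it is not needed; what cannot be salvaged is the homogeneity claim $n+\ell(\mu)=\ell(\gamma)$, and with it the direct-sum (rather than filtered) structure on which your finite triangular matrices rest.
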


\begin{proof}
\mref{it:iso1}
We first show that the set $\{{M}_{\alpha}|\alpha\in WC\}$ is $\bfk$-linearly independent. Assume that $\sum\limits_{\alpha\in \Lambda}c_\alpha {M}_{\alpha}=0$ where $\Lambda$ is a nonempty finite set of weak compositions, and $c_{\alpha}\in \bfk$ is nonzero for all $\alpha\in \Lambda$.
For each weak composition $\alpha$, we can uniquely write $\alpha=(\alpha{''}, 0^{k_\alpha})$, where $\alpha{''}$ is a left weak composition and $k_{\alpha}$ is a  nonnegative integer.
Then, by Lemmas \mref{lem:N0k=ak} and \mref{lemNalp=polyofa},
\begin{align}\mlabel{eq:Na=(-1)kallakal}
{M}_{\alpha}=\frac{1}{k_{\alpha}!}{M}_{0}^{k_{\alpha}}M_{\alpha{''}}+(\text{terms of lower degree in ${M}_{0}$}).
\end{align}
Here we regard ${M}_{\alpha}$ as a polynomial in ${M}_{0}$ with coefficients in $\LWCQSym$ since $M_0=-t-1/2$ is also a polynomial generator of $\LWCQSym[t]$ and we write $\LWCQSym[t]=\LWCQSym[M_0]$ when an element is expressed as a polynomial in $M_0$. Thus,
\begin{align*}
\sum_{\alpha\in \Lambda}c_\alpha {M}_{\alpha}=\sum_{\alpha\in \Lambda}\frac{1}{k_{\alpha}!}c_\alpha {M}_{0}^{k_{\alpha}} M_{\alpha{''}}+g({M}_{0}),
\end{align*}
where $g({M}_{0})$ is a polynomial in $\LWCQSym[{M}_{0}]$.
Let $k=\max\{k_\alpha\,|\,\alpha\in \Lambda\}$ and let $\Lambda_1=\{\alpha\in\Lambda\,|\,k_\alpha=k\}$.
Then $\Lambda_1$ is nonempty since $\Lambda$ is a nonempty finite set. Hence,
\begin{align*}
\sum_{\alpha\in \Lambda}c_\alpha {M}_{\alpha}=\sum_{\alpha\in \Lambda_1}\frac{1}{k!}c_\alpha {M}_{0}^{k}M_{\alpha{''}}+(\text{terms of lower degree in ${M}_{0}$}).
\end{align*}
Therefore, $\sum\limits_{\alpha\in \Lambda}c_\alpha {M}_{\alpha}=0$  implies that
$
\sum\limits_{\alpha\in \Lambda_1}\frac{1}{k!}c_\alpha {M}_{0}^{k}M_{\alpha{''}}=0.
$
By Theorem \mref{thm:twobasesLQWSym},
$\{{M}_{0}^k{M}_{\alpha}\,|\,k\in\NN,\alpha\in \LWC\}$ is a $\bfk$-basis for $\LWCQSym[{M}_{0}]$. Hence $c_{\alpha}=0$ for all $\alpha\in \Lambda_1$,
contradicting the assumption that $c_{\alpha}\neq0$ for all $\alpha\in \Lambda$ and $\Lambda_1\subseteq \Lambda$.
Therefore,  $\{{M}_{\alpha}|\alpha\in WC\}$ is $\bfk$-linearly independent.

We next show that for any $k\in\NN$ and $\alpha\in \LWC$, the element ${M}_{0}^kM_{\alpha}$ and hence $t^kM_\alpha$ is a $\bfk$-linear combination of $\{{M}_{\alpha}\,|\,\alpha\in WC\}$, showing that the set $\{{M}_{\alpha}|\alpha\in WC\}$ is a spanning set of $\RenQSym=\LWCQSym[t]$.
We prove the statement by induction on $k$. If $k=0$, then ${M}_{0}^kM_{\alpha}=M_{\alpha}$ is in $\{{M}_{\alpha}\,|\,\alpha\in WC\}$, since $\alpha$ is a left weak composition.
Now assume that the statement holds for all nonnegative integers less than a given positive integer $k$. Then it follows from Eq.~\meqref{eq:Na=(-1)kallakal} that
\begin{align*}
{M}_{0}^kM_{\alpha}={k!}{M}_{(\alpha,0^k)}+(\text{terms of lower degree in ${M}_{0}$}).
\end{align*}
By the induction hypothesis, terms with degree in ${M}_{0}$ lower than $k$ are $\bfk$-linear combinations of $\{{M}_{\alpha}\,|\,\alpha\in WC\}$, so this completes the induction.
\smallskip

\noindent
\mref{it:iso2}
By Item~\mref{it:iso1}, $\RenQSym=\LWCQSym[t]$ is a $\bfk$-algebra with linear basis $\{{M}_{\alpha}|\alpha\in \WC\}$. Thus by Theorem \mref{thm:widetildeMsquasishuffle},
the map
\begin{equation} \mlabel{eq:wciso}
	f: \QS(\NN)=\bfk \WC \rightarrow \RenQSym=\LWQSym[t],\qquad f(\alpha)=M_\alpha, \quad \alpha\in \WC,
\end{equation}
is an algebra isomorphism.
\end{proof}

\subsection{Hopf algebras and Rota-Baxter algebras}
We end the paper with some direct consequences of renormalized quasisymmetric functions on Hopf algebras and Rota-Baxter algebras. 

First Theorem~\mref{prop:nalpisakbasis} immediately equips the algebra of renormalized quasisymmetric functions of weak compositions with a Hopf algebra structure and a Rota-Baxter algebra structures.

By transporting of structures, the isomorphism $f$ in Theorem~\mref{prop:nalpisakbasis}\mref{it:iso2} 
induces a Hopf algebra structure on $\RenQSym$ from the one on $\bfk \WC$ recalled in Section~\mref{ss:alg}. To make the induced operations precise, we recall that, for a weak composition $\alpha=(\alpha_1,\cdots,\alpha_k)$ and a composition $J=(j_1,j_2,\cdots,j_\ell)$ of $k$, the weak composition $J[\alpha]$ of $\alpha$ is defined by
\begin{align*}
J[\alpha]:=(\alpha_1+\cdots+\alpha_{j_1},\alpha_{j_1+1}+\cdots+\alpha_{j_1+j_2},\cdots,\alpha_{j_1+j_2+\cdots+j_{\ell-1}+1}+\cdots+\alpha_{k}).
\end{align*}
Then the coproduct, counit  and antipode on $\LWCQSym[t]$ are given by
\begin{align*}
    \Delta_{L}({M}_{\alpha}):=\sum_{\alpha=\beta\cdot\gamma}{M}_{\beta}\otimes {M}_{\gamma}, \quad \epsilon_{L}(N_\alpha):=\delta_{\alpha,\emptyset}\quad {\rm and} \quad
    S_{L}(N_\alpha):=(-1)^{\ell(\alpha)}\sum_{J\models \ell(\alpha)}{M}_{J[\alpha^r]}.
\end{align*}

Next the divergence of monomial quasisymmetric functions for weak compositions was addressed by a formal regularization in a previous work~\mcite{GTY19}. See~\mcite{GTY20} for connection with the Malvenuto-Reutenauer Hopf algebra~\mcite{MR}. 
The resulting Hopf algebra $\QSym_{\tilde\NN}$ of weak composition quasisymmetric function is also isomorphic to $\bfk \WC$. Thus $\RQSym$ is isomorphic to $\RenQSym$ as Hopf algebras, showing the consistence of the two constructions. As noted in the introduction, one advantage of the present construction is its concrete form as power series. 
On the other hand, this Hopf algebra isomorphism and Theorem~3.8 of \mcite{GTY19} directly gives

\begin{prop}\mlabel{pp:subquot} The Hopf algebra $\RenQSym$ has the Hopf algebra $\QSym$ of quasisymmetric functions as both a Hopf subalgebra and a Hopf quotient algebra.
\end{prop}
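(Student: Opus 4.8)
The plan is to obtain both assertions by transport of structure along the isomorphisms assembled above. Concretely, Theorem~\mref{prop:nalpisakbasis} provides a Hopf algebra isomorphism $f\colon\QS(\NN)=\bfk\WC\to\RenQSym$, $\alpha\mapsto M_\alpha$, while the paragraph preceding the statement supplies a Hopf algebra isomorphism $\RenQSym\cong\QSym_{\tilde\NN}$ with the regularized weak-composition quasisymmetric functions of \mcite{GTY19}. I would first note that, for a genuine composition $\alpha\in\C$, the renormalized $M_\alpha$ of Definition~\mref{def:mrqf} coincides with the classical monomial quasisymmetric function, by Theorem~\mref{thm:mainthmwildms}; thus the copy of $\QSym$ sought inside $\RenQSym$ is the $\bfk$-span of $\{M_\alpha\mid\alpha\in\C\}$.

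For the Hopf subalgebra I would argue directly rather than through \mcite{GTY19}. The inclusion of additive semigroups $\PP\hookrightarrow\NN$ induces, by the functoriality of the quasi-shuffle construction recalled in Section~\mref{ss:alg} (apply a semigroup homomorphism to each component), an injective morphism of Hopf algebras $\QSym=\QS(\PP)=\bfk\C\hookrightarrow\QS(\NN)=\bfk\WC$. Indeed $\bfk\C$ is closed under the quasi-shuffle product because $\PP$ is closed under addition, and under the deconcatenation coproduct because every factor of a composition is again a composition; hence $\bfk\C$ is a graded connected sub-bialgebra, so a Hopf subalgebra, and $f$ carries it onto the standard $\QSym\subseteq\RenQSym$.

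For the Hopf quotient I would invoke Theorem~3.8 of \mcite{GTY19}, which yields a surjective Hopf algebra morphism $\QSym_{\tilde\NN}\twoheadrightarrow\QSym$, and compose it with the isomorphism $\RenQSym\cong\QSym_{\tilde\NN}$ to obtain a surjection $\RenQSym\twoheadrightarrow\QSym$ of Hopf algebras. This is the step I expect to be the main obstacle, which is exactly why it is routed through \mcite{GTY19}: unlike the embedding, the projection cannot come from a semigroup map, since there is no semigroup retraction $\NN\to\PP$ (any homomorphism $r$ fixing $\PP$ would force $r(0)=r(0)+r(0)$, which is impossible in $\PP$). One must therefore use the specific regularization map of \mcite{GTY19} and verify that it respects the quasi-shuffle product and the deconcatenation coproduct simultaneously; granting the Hopf isomorphism $\RenQSym\cong\QSym_{\tilde\NN}$ already in hand, that verification is inherited and the quotient statement follows at once.
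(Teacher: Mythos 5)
Your proposal is correct, but it splits the work differently from the paper. The paper derives both halves at once: since $\RenQSym\cong\bfk\WC\cong\QSym_{\tilde\NN}$ as Hopf algebras, it simply transports Theorem~3.8 of \mcite{GTY19} (which asserts that $\QSym$ is both a Hopf subalgebra and a Hopf quotient of $\QSym_{\tilde\NN}$) across this isomorphism, so neither half is re-proved internally. You instead prove the subalgebra half directly: the semigroup inclusion $\PP\hookrightarrow\NN$ induces, by functoriality of the quasi-shuffle construction, an embedding $\QS(\PP)=\bfk\C\hookrightarrow\QS(\NN)=\bfk\WC$ of connected filtered bialgebras (closure under the antipode is automatic for a connected filtered sub-bialgebra, and is also visible from the explicit formula $S_{L}(M_\alpha)=(-1)^{\ell(\alpha)}\sum_{J\models\ell(\alpha)}M_{J[\alpha^r]}$, which keeps compositions among compositions), and Theorem~\mref{thm:mainthmwildms} identifies $f(\bfk\C)$ with the honest $\QSym$ inside $\RenQSym$, since the renormalized $M_\alpha$ for $\alpha\in\C$ is the classical power series. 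Only the quotient half is routed through \mcite{GTY19}, exactly as in the paper, and your observation that this is unavoidable by such elementary means is sound: any semigroup retraction $r\colon\NN\to\PP$ would satisfy $r(0)=r(0)+r(0)$, which has no solution in $\PP$, so the surjection cannot be induced componentwise from a semigroup map. What your route buys is a self-contained, explicit proof of the subalgebra statement, exhibiting $\QSym$ concretely as a sub-Hopf-algebra of power series in $\RenQSym$; what the paper's route buys is brevity and uniformity, handling both statements in one stroke at the price of full dependence on the external reference.
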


We finally note that the semigroup algebra $\bfk \NN$ is naturally isomorphic to the polynomial algebra $\bfk[x]$ by sending $i\in \NN$ to $x^i$. Thus the quasi-shuffle algebra $\QS(\NN)$ is naturally isomorphic to the mixable shuffle algebra $\sha(\bfk[x])^+=\oplus_{k\geq 0} (\bfk[x])^{\ot k}$ in~\mcite{GK} by sending $\alpha=(\alpha_1,\cdots,\alpha_k)$ to $x^{\ot \alpha}:=x^{\alpha_1}\ot \cdots \ot x^{\alpha_k}$.
In that paper, the mixable shuffle algebra is used in the construction of the free commutative Rota-Baxter algebra $\sha(x)$ generated by the algebra $\bfk[x]$ or, alternatively, generated by $x$. In fact, by construction,
$$ \sha(\bfk[x]):= \bfk[x]\ot \sha(\bfk[x])^+$$
and the Rota-Baxter operator $P$ on $\sha(\bfk[x])$ is defined by
\begin{equation}
P(x^i\ot x^{\ot \alpha}):=P(x^i\ot x^{\alpha_1}\ot \cdots \ot x^{\alpha_k}): = 1\ot x^{\ot (i)\cdot \alpha}
= 1\ot (x^i\ot x^{\alpha_1}\ot \cdots \ot x^{\alpha_k}).
\mlabel{eq:frbo}
\end{equation}

Then from the algebra isomorphism in Eq.~\meqref{eq:wciso}, we obtain an algebra isomorphism
$$ \sha(\bfk[x])\cong \bfk[x]\ot \LWCQSym[t] =\LWCQSym[t,x]\subseteq \QQ[[X]][t,x].$$
This isomorphism on one hand gives a power series realization of the abstractly defined free Rota-Baxter algebra $\sha(\bfk[x])$ and therefore the corresponding free Rota-Baxter algebras constructed by Rota~\mcite{Ro1} and Cartier~\mcite{Car1972}. On the other hand, the polynomial algebra $\LWCQSym[t,x]$ is equipped with a Rota-Baxter algebra structure from $\sha(\bfk[x])$. Explicitly, by Theorem~\mref{prop:nalpisakbasis}, a linear basis of $\LWCQSym[t,x]=\LWCQSym[M_0,x]$ is given by
$\{x^nM_\alpha\,|\, n\in \NN, \alpha\in \WC\}$. Thus the Rota-Baxter operator $P$ on $\sha(\bfk[x])$ defined in Eq.~\meqref{eq:frbo} induces a Rota-Baxter operator on $\LWCQSym[M_0,x]$ sending $x^nM_\alpha$ to $M_{(n)\cdot\alpha}$.

\smallskip

\noindent {\bf Acknowledgments}: This work was supported by the National Natural Science Foundation of China (Grant No.\@ 11771190, 11821001, 11890663, 12071377, 12071383).

\end{document}